\begin{document}

\newtheorem{theorem}[subsection]{Theorem}
\newtheorem{proposition}[subsection]{Proposition}
\newtheorem{lemma}[subsection]{Lemma}
\newtheorem{corollary}[subsection]{Corollary}
\newtheorem{conjecture}[subsection]{Conjecture}
\newtheorem{prop}[subsection]{Proposition}
\newtheorem{defin}[subsection]{Definition}

\numberwithin{equation}{section}
\newcommand{\mr}{\ensuremath{\mathbb R}}
\newcommand{\mc}{\ensuremath{\mathbb C}}
\newcommand{\dif}{\mathrm{d}}
\newcommand{\intz}{\mathbb{Z}}
\newcommand{\ratq}{\mathbb{Q}}
\newcommand{\natn}{\mathbb{N}}
\newcommand{\comc}{\mathbb{C}}
\newcommand{\rear}{\mathbb{R}}
\newcommand{\prip}{\mathbb{P}}
\newcommand{\uph}{\mathbb{H}}
\newcommand{\fief}{\mathbb{F}}
\newcommand{\majorarc}{\mathfrak{M}}
\newcommand{\minorarc}{\mathfrak{m}}
\newcommand{\sings}{\mathfrak{S}}
\newcommand{\fA}{\ensuremath{\mathfrak A}}
\newcommand{\mn}{\ensuremath{\mathbb N}}
\newcommand{\mq}{\ensuremath{\mathbb Q}}
\newcommand{\half}{\tfrac{1}{2}}
\newcommand{\f}{f\times \chi}
\newcommand{\summ}{\mathop{{\sum}^{\star}}}
\newcommand{\chiq}{\chi \bmod q}
\newcommand{\chidb}{\chi \bmod db}
\newcommand{\chid}{\chi \bmod d}
\newcommand{\sym}{\text{sym}^2}
\newcommand{\hhalf}{\tfrac{1}{2}}
\newcommand{\sumstar}{\sideset{}{^*}\sum}
\newcommand{\sumprime}{\sideset{}{'}\sum}
\newcommand{\sumprimeprime}{\sideset{}{''}\sum}
\newcommand{\sumflat}{\sideset{}{^\flat}\sum}
\newcommand{\shortmod}{\ensuremath{\negthickspace \negthickspace \negthickspace \pmod}}
\newcommand{\V}{V\left(\frac{nm}{q^2}\right)}
\newcommand{\sumi}{\mathop{{\sum}^{\dagger}}}
\newcommand{\mz}{\ensuremath{\mathbb Z}}
\newcommand{\leg}[2]{\left(\frac{#1}{#2}\right)}
\newcommand{\muK}{\mu_{\omega}}
\newcommand{\thalf}{\tfrac12}
\newcommand{\lp}{\left(}
\newcommand{\rp}{\right)}
\newcommand{\Lam}{\Lambda_{[i]}}
\newcommand{\lam}{\lambda}
\newcommand{\af}{\mathfrak{a}}
\newcommand{\sw}{S_{[i]}(X,Y;\Phi,\Psi)}
\newcommand{\lz}{\left(}
\newcommand{\pz}{\right)}
\newcommand{\bfrac}[2]{\lz\frac{#1}{#2}\pz}
\newcommand{\odd}{\mathrm{\ primary}}
\newcommand{\even}{\text{ even}}
\newcommand{\res}{\mathrm{Res}}
\newcommand{\sumn}{\sumstar_{(c,1+i)=1}  w\left( \frac {N(c)}X \right)}
\newcommand{\lab}{\left|}
\newcommand{\rab}{\right|}
\newcommand{\Go}{\Gamma_{o}}
\newcommand{\Ge}{\Gamma_{e}}
\newcommand{\M}{\widehat}

\def\su#1{\sum_{\substack{#1}}}
\def\phis{\varphi^*}

\theoremstyle{plain}
\newtheorem{conj}{Conjecture}
\newtheorem{remark}[subsection]{Remark}

\newcommand{\pfrac}[2]{\left(\frac{#1}{#2}\right)}
\newcommand{\pmfrac}[2]{\left(\mfrac{#1}{#2}\right)}
\newcommand{\ptfrac}[2]{\left(\tfrac{#1}{#2}\right)}
\newcommand{\pMatrix}[4]{\left(\begin{matrix}#1 & #2 \\ #3 & #4\end{matrix}\right)}
\newcommand{\ppMatrix}[4]{\left(\!\pMatrix{#1}{#2}{#3}{#4}\!\right)}
\renewcommand{\pmatrix}[4]{\left(\begin{smallmatrix}#1 & #2 \\ #3 & #4\end{smallmatrix}\right)}
\def\en{{\mathbf{\,e}}_n}

\newcommand{\ppmod}[1]{\hspace{-0.15cm}\pmod{#1}}
\newcommand{\ccom}[1]{{\color{red}{Chantal: #1}} }
\newcommand{\acom}[1]{{\color{blue}{Alia: #1}} }
\newcommand{\alexcom}[1]{{\color{green}{Alex: #1}} }
\newcommand{\hcom}[1]{{\color{brown}{Hua: #1}} }

\makeatletter
\def\widebreve{\mathpalette\wide@breve}
\def\wide@breve#1#2{\sbox\z@{$#1#2$}%
     \mathop{\vbox{\m@th\ialign{##\crcr
\kern0.08em\brevefill#1{0.8\wd\z@}\crcr\noalign{\nointerlineskip}%
                    $\hss#1#2\hss$\crcr}}}\limits}
\def\brevefill#1#2{$\m@th\sbox\tw@{$#1($}%
  \hss\resizebox{#2}{\wd\tw@}{\rotatebox[origin=c]{90}{\upshape(}}\hss$}
\makeatletter

\title[Lower bounds for shifted moments of Dirichlet $L$-functions of fixed modulus]{Lower bounds for shifted moments of Dirichlet $L$-functions of fixed modulus}

\author[P. Gao]{Peng Gao}
\address{School of Mathematical Sciences, Beihang University, Beijing 100191, China}
\email{penggao@buaa.edu.cn}

\author[L. Zhao]{Liangyi Zhao}
\address{School of Mathematics and Statistics, University of New South Wales, Sydney NSW 2052, Australia}
\email{l.zhao@unsw.edu.au}

\begin{abstract}
We establish sharp lower bounds for shifted (with two shifts) moments of Dirichlet $L$-function of fixed modulus under the generalized Riemann hypothesis.
\end{abstract}

\maketitle

\noindent {\bf Mathematics Subject Classification (2010)}: 11L40, 11M06  \newline

\noindent {\bf Keywords}:  Dirichlet characters, Dirichlet $L$-functions, lower bounds, shifted moments

\section{Introduction}\label{sec 1}

Because of their significant implication in some very important analytic number theoretic problems, the moments of families of $L$-functions on the critical line have been a subject of sustained interest and intense investigation. In connections with random matrix theory, J. P. Keating and N. C. Snaith \cite{Keating-Snaith02} conjectured asymptotic formulas for moments of various families of $L$-functions. In \cite{CFKRS}, J. B. Conrey, D. W. Farmer, J. P. Keating, M. O. Rubinstein and N. C. Snaith made more precise predictions on the asymptotic behaviors of these moments with lower order terms.  A systematic method was developed by K. Soundararajan \cite{Sound2009} to obtain upper bounds for moments of $L$-functions under the generalized Riemann hypothesis (GRH). For instance, he considered moments of the Riemann zeta function $\zeta(s)$ on the critical line given by
\begin{align}
\label{zetamoment}
   I_k(T):= \int\limits^T_0|\zeta(\frac 12+it)|^{2k} \dif t,
\end{align}
    where $T$, $k>0$ are real numbers. The method of Soundararajan was further implemented by A. J. Harper \cite{Harper} to establish sharp upper bounds for moments conditionally. In particular, it applies to $I_k(T)$ in \eqref{zetamoment}. \newline

  In \cite{Radziwill&Sound}, a novel principle was introduced by M. Radziwi{\l \l} and K. Soundararajan to enable one to pursue upper bounds of all smaller moments with the knowledge of an upper bound for a particular moment. This approach was applied by W. Heap, M. Radziwi{\l\l} and K. Soundararajan \cite{HRS} to achieve sharp upper bounds for $I_k(T)$ unconditionally for all real $0 \leq k \leq 2$. Another application of this principle can be found in \cite{Curran}, in which M. J. Curran established under GRH upper bounds for the shifted moments
\begin{align}
\label{eqn:shiftedMomentszeta}
 \int\limits_T^{2T}  \prod_{k = 1}^m |\zeta(\tfrac{1}{2} + i (t + a_k))|^{2 b_k} \dif t,
\end{align}
    where $a_k, b_k$ are real numbers satisfying $|a_k| \leq T/2$ and $b_k\geq 0$. The result of Curran improves upon earlier ones given in \cites{Chandee11, NSW}. Also, the bounds obtained in \cites{Curran} for the expressions in \eqref{eqn:shiftedMomentszeta} are indeed sharp. In fact,
   using a dual lower bounds principle developed by W. Heap and K. Soundararajan \cite{H&Sound} as well as the work of Radziwi{\l \l} and  Soundararajan in \cite{Radziwill&Sound},  Curran \cite{Curran24} obtained matching lower bounds for the shifted moments of $\zeta(s)$. \newline

One significant aspect of the study of shifted moments was exhibited by M. Munsch \cite{Munsch17}, who obtained upper bounds for the shifted moments of the family of Dirichlet $L$-functions to a fixed modulus and applied the result to estimate moments of character sums. \newline

  Using an approach similar to that in \cite{Curran}, B. Szab\'o \cite{Szab} strengthened the result in \cite{Munsch17}, proving under GRH, for a large fixed modulus $q$, any  positive integer $k$, real tuples ${\bf a} =  (a_1, \ldots, a_k), {\bf t} =  (t_1, \ldots, t_k)$ with $a_j \geq 0$ and $|t_j| \leq q^A$ for a fixed positive real number $A$,
\begin{align}
\label{eqn:shiftedMoments}
M_{\bf{t}, \bf{a}}(q) := \sum_{\chi \in X_q^*} \big| L\big( \tfrac12+it_1,\chi \big) \big|^{2a_1} \cdots \big| L\big( \tfrac12+it_{k},\chi \big) \big|^{2a_{k}} \ll_{\bf{t}} &  \varphi(q)(\log q)^{a_1^2+\cdots +a_{k}^2} \prod_{1\leq j<l\leq k}  \big|\zeta(1+i(t_j-t_l)+\tfrac 1{\log q}) \big|^{2a_ja_l},
\end{align}
where $X_q^*$ denotes the set of primitive Dirichlet characters modulo $q$, $\varphi$ is the Euler totient function. We also write $\phis(q)$ for the cardinality of $X_q^*$. \newline

  Motivated by the above mentioned result of Curran \cite{Curran24}, the aim of this paper is to establish the corresponding lower bounds for $M_{\bf{t}, \bf{a}}(q)$. Our main result establishes the minorant for $k=2$.
\begin{theorem}
\label{thm:main}
With the notation as above and the truth of GRH, let $k=2$ and $|t_j| \leq q^A$ for $j = 1, \ldots , k$ for a fixed positive real number $A$. Assume also that  $\phis(q) \geq \eta \varphi(q)$ for a fixed constant $\eta>0$.  we have
\begin{align}
\begin{split}
\label{eqn:shiftedMomentslowerbounds}
M_{{\bf t}, \bf{a}}(q)  \gg_{\bf{t}} \varphi(q)(\log q)^{a_1^2+\cdots +a_{k}^2} \prod_{1\leq j<l\leq k}  \big|\zeta(1+i(t_j-t_l)+\tfrac 1{\log q}) \big|^{2a_ja_l}.
\end{split}
\end{align}
\end{theorem}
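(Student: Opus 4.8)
The plan is to adapt the dual lower-bound principle of Heap and Soundararajan \cite{H&Sound}, combined with the method of Radziwi{\l\l} and Soundararajan \cite{Radziwill&Sound} in the form used by Curran \cite{Curran24} for $\zeta$, from the $t$-aspect to the family $X_q^*$. Write $\tau=t_1-t_2$ and abbreviate the target by
\[
\mathcal{R}(q):=\varphi(q)(\log q)^{a_1^2+a_2^2}\big|\zeta\big(1+i\tau+\tfrac1{\log q}\big)\big|^{2a_1a_2}.
\]
We may assume $a_1,a_2>0$, since if one exponent vanishes the statement reduces to a single-shift moment whose analogous lower bound is standard under GRH; and as $\varphi(q)\asymp\phis(q)$ under the hypothesis $\phis(q)\ge\eta\varphi(q)$ we pass freely between the two (that hypothesis enters only to turn a $\phis(q)$ coming from character orthogonality into a $\varphi(q)$).

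First I would construct a resonating Dirichlet polynomial. Fix a small $\theta>0$ and a rapidly increasing sequence $0=\theta_0<\theta_1<\cdots<\theta_K=\theta$; put $I_v=\{p:q^{\theta_{v-1}}<p\le q^{\theta_v}\}$ and $c_p=a_1p^{-it_1}+a_2p^{-it_2}$. For each $v$ let $\mathcal{N}_v(\chi)$ be the truncation to $\Omega(n)\le\ell_v$ of the Euler product $\exp\big(\sum_{p\in I_v}\chi(p)c_pp^{-1/2}\big)$, explicitly
\[
\mathcal{N}_v(\chi)=\sum_{\substack{\Omega(n)\le\ell_v\\ p\mid n\,\Rightarrow\, p\in I_v}}g(n)\,b(n)\,\chi(n)\,n^{-1/2},\qquad g(p^m)=\tfrac1{m!},\quad b(p^m)=c_p^m,
\]
with $g,b$ multiplicative, and set $\mathcal{N}(\chi)=\prod_{v=1}^K\mathcal{N}_v(\chi)$. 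The truncation levels $\ell_v$ are taken large enough that the tails $\exp\big(\sum_{p\in I_v}\chi(p)c_pp^{-1/2}\big)-\mathcal{N}_v(\chi)$ are negligible on $q$-average, and small enough that $\sum_v\theta_v\ell_v<\tfrac12$, so that $\mathcal{N}(\chi)$ is a Dirichlet polynomial of length $\le q^{1/2-\epsilon}$ and orthogonality of the primitive characters modulo $q$ detects only the diagonal in $\sum_\chi|\mathcal{N}(\chi)|^2$.

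The argument then rests on two estimates: a twisted first moment lower bound
\[
\mathrm{Re}\sum_{\chi\in X_q^*}L(\tfrac12+it_1,\chi)^{a_1}L(\tfrac12+it_2,\chi)^{a_2}\,\overline{\mathcal{N}(\chi)}\ \gg\ \mathcal{R}(q),
\]
and a second moment upper bound $\sum_{\chi\in X_q^*}|\mathcal{N}(\chi)|^2\ll\mathcal{R}(q)$. Granting these, Cauchy--Schwarz in the form
\[
\Big|\sum_{\chi\in X_q^*}L(\tfrac12+it_1,\chi)^{a_1}L(\tfrac12+it_2,\chi)^{a_2}\,\overline{\mathcal{N}(\chi)}\Big|^2\ \le\ M_{{\bf t},{\bf a}}(q)\cdot\sum_{\chi\in X_q^*}|\mathcal{N}(\chi)|^2
\]
gives $M_{{\bf t},{\bf a}}(q)\gg\mathcal{R}(q)^2/\mathcal{R}(q)=\mathcal{R}(q)$, which is Theorem~\ref{thm:main}. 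The second moment is routine: the diagonal contributes $\phis(q)\sum_n|g(n)b(n)|^2n^{-1}\le\phis(q)\prod_{p\le q^{\theta}}\big(1+|c_p|^2/p+O(p^{-3/2})\big)$, and since $|c_p|^2=a_1^2+a_2^2+2a_1a_2\cos(\tau\log p)$, Mertens' theorem together with the uniform estimate $\sum_{p\le X}\cos(\tau\log p)/p=\log|\zeta(1+i\tau+\tfrac1{\log X})|+O(1)$ (valid for all real $\tau$, large $|\tau|$ included) makes this product $\asymp(\log q)^{a_1^2+a_2^2}\big|\zeta(1+i\tau+\tfrac1{\log q})\big|^{2a_1a_2}$.

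The crux, and where I expect the real work to lie, is the twisted first moment. The obstacle is that $L(\tfrac12+it_j,\chi)^{a_j}$ cannot be opened into a Dirichlet polynomial of length much below $q^{1+\epsilon}$, whose convolution against $\overline{\mathcal{N}}$ would create off-diagonal terms modulo $q$ swamping $\mathcal{R}(q)$. Instead I would use GRH, through the $\log L$ estimates underlying the upper-bound arguments of \cite{Sound2009} and \cite{Szab}, to replace $L(\tfrac12+it_1,\chi)^{a_1}L(\tfrac12+it_2,\chi)^{a_2}$ --- for characters away from the few zeros $L(\tfrac12+it_j,\chi)=0$ and on which no prime sum over an $I_v$ is abnormally large --- by a bounded multiple of $\prod_v\exp\big(\sum_{p\in I_v}\chi(p)c_pp^{-1/2}\big)$. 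Multiplying this against $\overline{\mathcal{N}(\chi)}=\prod_v\overline{\mathcal{N}_v(\chi)}$ range by range, the main term is $\prod_v|\mathcal{N}_v(\chi)|^2=|\mathcal{N}(\chi)|^2$, whose $\chi$-sum is $\gg\mathcal{R}(q)$ exactly as in the second moment computation, while every other term carries at least one tail factor $\exp\big(\sum_{p\in I_v}\chi(p)c_pp^{-1/2}\big)-\mathcal{N}_v(\chi)$; the Heap--Soundararajan calibration of the $\theta_v$ and $\ell_v$ forces these, after summation over $\chi$ and moment bounds for short prime sums on the thin ranges $I_v$, to be $o(\mathcal{R}(q))$. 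The genuine difficulties I anticipate are: handling the exceptional characters (those near a zero, bounded trivially; those with an overlarge prime sum, bounded via GRH and large-deviation estimates), the combinatorial bookkeeping of the $2^K$ cross terms, and keeping all estimates uniform in ${\bf t}$ with $|t_j|\le q^A$ --- most cleanly, perhaps, by treating the ranges $|\tau|\le1$ and $|\tau|>1$ separately, the latter being where the moment essentially factors into two independent single-shift moments.
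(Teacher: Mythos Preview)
Your overall architecture---a resonator $\mathcal{N}(\chi)$ built from truncated prime exponentials, the second-moment bound $\sum_\chi|\mathcal{N}|^2\ll\mathcal{R}(q)$, and an inequality extracting $M_{\bf t,\bf a}(q)\gg\mathcal{R}(q)$---is the right shape, and the second-moment computation matches the paper. The gap is in the twisted first moment, and it is structural rather than a matter of bookkeeping.

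Your Cauchy--Schwarz forces the non-integer power $L_1^{a_1}L_2^{a_2}$ into the quantity $\mathrm{Re}\sum_\chi L_1^{a_1}L_2^{a_2}\overline{\mathcal{N}}$ that you must bound from below. Your plan is to replace $L_j^{a_j}$ by $\exp\big(a_j\sum_p\chi(p)p^{-1/2-it_j}\big)$ on good characters via GRH. But the Soundararajan-type inequality underlying \cite{Sound2009} and \cite{Szab} is one-sided: it gives only
\[
\log|L(\tfrac12+it,\chi)|\ \le\ \Re\sum_{p\le x}\frac{\chi(p)}{p^{1/2+it}}\,+\,O\!\Big(\frac{\log q}{\log x}\Big),
\]
not a two-sided approximation $\log L\approx\sum$. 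So you obtain $|L_1^{a_1}L_2^{a_2}|\le|\exp(\sum c_p\chi(p)/\sqrt p)|\cdot e^{O(1)}$, which is the wrong direction for a lower bound on the twisted sum. Even a two-sided bound on $|L|$ would say nothing about the \emph{phase} of $L^{a_j}$, which $\mathrm{Re}\sum L_1^{a_1}L_2^{a_2}\overline{\mathcal{N}}$ certainly sees. The step ``replace $L^{a_j}$ by a bounded multiple of $\exp(\cdots)$'' therefore cannot be justified, and the argument does not close.

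The paper (following Curran \cite{Curran24}) avoids fractional powers of $L$ altogether by replacing your Cauchy--Schwarz with a tailored H\"older inequality. One takes
\[
\mathcal{S}_0=\sum_{\chi\in\mathcal{T}_k}\ \prod_{m\le k}L(\tfrac12+it_m,\chi)\,\exp\!\big((a_m-1)\mathcal{P}(\tfrac12+it_m,\chi)+a_m\,\overline{\mathcal{P}(\tfrac12+it_m,\chi)}\big),
\]
so each $L$-factor appears to the \emph{first} power, and chooses exponents $u,v,r_1,\ldots,r_k>1$ with $\tfrac1u+\tfrac1v+\sum_m\tfrac1{r_m}=1$ so that H\"older yields
\[
|\mathcal{S}_0|\ \le\ M_{\bf t,\bf a}(q)^{1/u}\,|\mathcal{J}|^{1/v}\prod_{m\le k}|\mathcal{S}_m|^{1/r_m},
\]
where $\mathcal{J}$ contains no $L$-factor and each $\mathcal{S}_m$ carries $|L(\tfrac12+it_m,\chi)|^{2k}$. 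Now the one-sided GRH inequality is exactly right: it bounds $\mathcal{J}$ and every $\mathcal{S}_m$ from \emph{above} by $\mathcal{R}(q)$, while $\mathcal{S}_0$---having integer $L$-powers---is bounded from \emph{below} via the approximate functional equation for $\prod_{m\le k}L(\tfrac12+it_m,\chi)$. It is in this last step that the restriction $k=2$ enters: the off-diagonal in the functional-equation piece is controlled by Weil's bound for hyper-Kloosterman sums, and the resulting saving beats the main term only when $k\le 2$.
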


  We reserve the letter $p$ for a prime number throughout the paper. Note that (see \cite[(3.7)]{iwakow})  $\varphi^*(q)$ is a multiplicative function
  satisfying $\varphi^*(p)=p-2$ and $\varphi^*(p^m)=p^m(1-1/p)^2$ for integers $m \geq 2$. Note also that there are no primitive Dirichlet characters modulo $q$ when $q \equiv 2 \pmod 4$. Thus as a particular case of Theorem \ref{thm:main}, the lower bounds given in \eqref{eqn:shiftedMomentslowerbounds} are valid for primes $q \not\equiv 2 \pmod 4$. Also, the result in Theorem \ref{thm:main} shows that the bounds given in \eqref{eqn:shiftedMoments} and \eqref{eqn:shiftedMomentslowerbounds} are of the correct order of magnitude. \newline

   The proof of Theorem \ref{thm:main} is based on the method in \cite{Curran24} and is carried out for a general integer $k$ whenever possible. We shall indicate where the restriction $k=2$ is necessary.  We also point out here without further elaboration that the implicit constants in big-$O$ or Vinogradov asymptotic notation usually will depend on $\bf{a}$, although we shall often omit this in the subscripts.  Moreover, $q$ is implicitly assumed to be sufficiently large in terms of $\bf{a}$.

\section{Preliminary Results}

\subsection{Sums over primes}
\label{sec2.1}

  We note the following result concerning sums over primes.
\begin{lemma}
\label{RS} Let $x \geq 2$ and $\alpha \geq 0$. We have, for some constant $b_1$,
\begin{align}
\label{merten}
\sum_{p\le x} \frac{1}{p} =& \log \log x + b_1+ O\Big(\frac{1}{\log x}\Big), \\
\label{mertenpartialsummation}
\sum_{p\le x} \frac {\log p}{p} =& \log x + O(1),   \\
\label{mertenstype}
  \sum_{p\leq x} \frac{\cos(\alpha \log p) }{p}=& \log |\zeta(1+1/\log x+i\alpha)| +O(1).
\end{align}
\end{lemma}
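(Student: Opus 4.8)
The plan is to treat \eqref{merten} and \eqref{mertenpartialsummation} as the classical theorems of Mertens — one may simply quote them from a standard source such as \cite{iwakow} — and to derive \eqref{mertenstype} from them. For \eqref{mertenpartialsummation} I would start from $\log(\lfloor x\rfloor!)=\sum_{n\le x}\Lambda(n)\lfloor x/n\rfloor$ together with Stirling's formula to obtain $\sum_{n\le x}\Lambda(n)/n=\log x+O(1)$, and then remove the contribution of the proper prime powers, namely $\sum_p\sum_{m\ge 2}(\log p)/p^m=O(1)$. For \eqref{merten} I would apply partial summation to \eqref{mertenpartialsummation}: writing $A(t)=\sum_{p\le t}(\log p)/p=\log t+r(t)$ with $r(t)=O(1)$,
\begin{align*}
\sum_{p\le x}\frac1p=\frac{A(x)}{\log x}+\int_2^x\frac{A(t)}{t(\log t)^2}\,\dif t=1+\log\log x-\log\log 2+\int_2^x\frac{r(t)}{t(\log t)^2}\,\dif t+O\Big(\frac1{\log x}\Big),
\end{align*}
and since $\int_2^\infty r(t)\big(t(\log t)^2\big)^{-1}\,\dif t$ converges with tail $O(1/\log x)$, the right-hand side equals $\log\log x+b_1+O(1/\log x)$ for a suitable constant $b_1$.

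For \eqref{mertenstype}, I would set $\delta=1/\log x$ and $s=1+\delta+i\alpha$, so that $\Re s>1$ and the Euler product for $\zeta$ converges absolutely; taking logarithms and real parts gives
\begin{align*}
\log|\zeta(s)|=\Re\log\zeta(s)=\Re\sum_p\sum_{m\ge1}\frac1{mp^{ms}}=\sum_p\frac{\cos(\alpha\log p)}{p^{1+\delta}}+O(1),
\end{align*}
the error absorbing the proper prime powers since $\sum_p\sum_{m\ge2}p^{-m(1+\delta)}=O(1)$. It then suffices to show that $\sum_p\cos(\alpha\log p)p^{-1-\delta}$ differs from $\sum_{p\le x}\cos(\alpha\log p)p^{-1}$ by $O(1)$. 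Splitting the former at $x$, the part over $p\le x$ differs from $\sum_{p\le x}\cos(\alpha\log p)p^{-1}$ by
\begin{align*}
\Big|\sum_{p\le x}\cos(\alpha\log p)\Big(\frac1{p^{1+\delta}}-\frac1p\Big)\Big|\le\sum_{p\le x}\frac{1-e^{-\delta\log p}}{p}\le\delta\sum_{p\le x}\frac{\log p}{p}=O(1),
\end{align*}
using $|\cos|\le1$, $1-e^{-u}\le u$, and \eqref{mertenpartialsummation}; and the tail is bounded in absolute value by $\sum_{p>x}p^{-1-\delta}=\sum_pp^{-1-\delta}-\sum_{p\le x}p^{-1-\delta}$. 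Here $\sum_pp^{-1-\delta}=\log\zeta(1+\delta)+O(1)=\log\log x+O(1)$ since $\zeta(1+\delta)=\delta^{-1}+O(1)$, while $\sum_{p\le x}p^{-1-\delta}=\log\log x+O(1)$ by the previous display with $\alpha=0$ together with \eqref{merten}; hence the tail is $O(1)$ and \eqref{mertenstype} follows.

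I expect no genuine obstacle here: the one point that needs a little care is the tail estimate for $\sum_{p>x}p^{-1-\delta}$, where any crude termwise bound is too lossy and one must instead exploit the near-cancellation of the two $\log\log x$ main terms coming from $\log\zeta(1+\delta)$ and from \eqref{merten}. All of the estimates will be uniform in $\alpha\ge0$, as the statement requires.
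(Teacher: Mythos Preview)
Your proposal is correct. The paper's own proof is quite different in spirit: it simply cites standard references, invoking parts (b) and (d) of \cite[Theorem~2.7]{MVa1} for \eqref{merten} and \eqref{mertenpartialsummation}, and \cite[Lemma~3.2]{Kou} for \eqref{mertenstype}, with no further argument. Your route is a genuinely self-contained derivation: you recover \eqref{mertenpartialsummation} via the Chebyshev--Stirling identity, obtain \eqref{merten} by partial summation, and then prove \eqref{mertenstype} directly by comparing the truncated sum with $\log|\zeta(1+1/\log x+i\alpha)|$ through the Euler product, handling the tail $\sum_{p>x}p^{-1-\delta}$ by matching the two $\log\log x$ main terms. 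What your approach buys is independence from the cited sources and an explicit check of uniformity in $\alpha$; what the paper's approach buys is brevity, since all three estimates are entirely standard. Either is perfectly acceptable here.
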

\begin{proof}
  The expressions in \eqref{merten} and \eqref{mertenpartialsummation} can be found in parts (d) and (b) of \cite[Theorem 2.7]{MVa1}, respectively. The formula \eqref{mertenstype} is a special case of \cite[Lemma 3.2]{Kou}.
\end{proof}

\subsection{Upper Bounds for $\log |L(\tfrac12+it,\chi)|$}
\label{sec2.4}

We note the following variation of \cite[Proposition 2]{Szab} which bounds the sums of $\log |L(\tfrac12+it,\chi)|$ over various $t$ by Dirichlet polynomials.

\begin{lemma}
\label{lem:logZetaUpperBound}
Assume that GRH holds for $L(s,\chi)$, where $\chi$ is a primitive Dirichlet character modulo $q$. For a positive integer $k$, let $A,a_1,a_2,\ldots, a_{k}$ be positive constants and write $a =: a_1+\cdots+ a_{k}$.  Suppose also $t_1,\ldots, t_{k}$ are real numbers such that $|t_i|\leq q^A$. For any integer $n$, set
$$h(n) := \frac{1}{2}(a_1n^{-it_1}+\cdots +a_{k}n^{-it_{k}}).$$
Then, for large $q$ and $x \geq 2$,
\begin{align}
\label{logLboundgen}
\sum^k_{j=1}a_j\log |L(\tfrac12+it_j,\chi)|  \leq 2\cdot\Re \sum_{p\leq x} \frac{h(p)\chi(p)}{p^{1/2+1/\log x}}\frac{\log (x/p)}{\log x}+\Re\sum_{p\leq x^{1/2}} \frac{h(p^2)\chi(p^2)}{p}+a(A+1)\frac{\log q}{\log x}+O(1) .
\end{align}  

   Moreover, for any non-quadratic character $\chi$, we have 
\begin{align}
\label{logLboundnonquad}
\sum^k_{j=1}a_j\log |L(\tfrac12+it_j,\chi)|  \leq 2\cdot\Re \sum_{p\leq x} \frac{h(p)\chi(p)}{p^{1/2+1/\log x}}\frac{\log (x/p)}{\log x}+\Re\sum_{p\leq \min (\log q, x^{1/2})} \frac{h(p^2)\chi(p^2)}{p}+a(A+1)\frac{\log q}{\log x}+O(1) .
\end{align}
\end{lemma}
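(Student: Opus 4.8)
The plan is to adapt the standard Soundararajan--Harper--type argument for bounding $\log|L|$ by a short Dirichlet polynomial, using the explicit formula under GRH together with Lemma \ref{RS}, and to pass from the single-character estimate to the weighted sum $\sum_j a_j \log|L(\tfrac12+it_j,\chi)|$ by taking the appropriate linear combination, which is exactly what the weights $h(n)=\tfrac12\sum_j a_j n^{-it_j}$ encode. First I would recall (or reprove, following \cite[Proposition 2]{Szab} or the corresponding lemma of Soundararajan) the pointwise inequality: for a primitive character $\chi \bmod q$, assuming GRH for $L(s,\chi)$, for each $j$ and for $x \geq 2$,
\begin{align*}
\log|L(\tfrac12+it_j,\chi)| \leq \Re \sum_{n \leq x} \frac{\Lambda(n)}{n^{1/2+1/\log x}\log n}\frac{\log(x/n)}{\log x}\, \chi(n) n^{-it_j} + (1+A)\frac{\log q}{\log x} + O(1),
\end{align*}
which comes from applying the explicit formula / Hadamard factorization to $(\log L)'$ and integrating, using that the zeros lie on the critical line to control the sum over zeros by $\log q / \log x$ (here the $q^A$ bound on $|t_j|$ enters, contributing the factor $(A+1)$). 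Multiplying the $j$-th inequality by $a_j$ and summing over $j$, the prime-power part becomes $\Re\sum_{n\leq x} \tfrac{\Lambda(n)}{n^{1/2+1/\log x}\log n}\tfrac{\log(x/n)}{\log x} \cdot 2 h(n)\chi(n)$, since $\sum_j a_j n^{-it_j} = 2h(n)$.

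Next I would separate the contribution of the primes $n=p$ from that of higher prime powers. For $n=p$ the term is precisely $2\,\Re \sum_{p\leq x} \frac{h(p)\chi(p)}{p^{1/2+1/\log x}}\frac{\log(x/p)}{\log x}$, which is the first main term in \eqref{logLboundgen}. For $n=p^2$ we have $\Lambda(p^2)/\log(p^2) = \tfrac12$, and $p^{2(1/2+1/\log x)} = p^{1+2/\log x}$; dropping the harmless smoothing weight $\log(x/p^2)/\log x \leq 1$ and absorbing the small discrepancy between $p^{1+2/\log x}$ and $p$ into $O(1)$ (using $\sum_p p^{-1}(1-p^{-2/\log x}) = O(1)$, which follows from \eqref{merten} and partial summation, since $1-p^{-2/\log x} \ll (\log p)/\log x$), one is left with $\Re\sum_{p\leq x^{1/2}} \frac{h(p^2)\chi(p^2)}{p}$, the second term of \eqref{logLboundgen}. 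For $n=p^m$ with $m\geq 3$ the sum $\sum_{p^m\leq x,\,m\geq3} \Lambda(p^m)/(p^{m/2}\log p^m) \ll \sum_p \sum_{m\geq 3} p^{-m/2}$ converges and contributes $O(1)$; here I use that $|h(n)|\leq a/2$ uniformly, so these tails are bounded in terms of $a$. Collecting everything gives \eqref{logLboundgen}.

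For the improvement \eqref{logLboundnonquad} for non-quadratic $\chi$, the point is that $\chi^2$ is then a non-principal character modulo $q$, so the twisted prime sum $\sum_{\log q < p \leq x^{1/2}} \chi(p)^2/p$ enjoys cancellation: under GRH for $L(s,\chi^2)$ (which follows from GRH as stated), partial summation against the prime-sum version of the explicit formula gives $\sum_{p\leq y}\chi^2(p)\log p/p \ll \log q$ uniformly for $y$ up to any fixed power of $q$, whence $\sum_{\log q < p \leq x^{1/2}} \chi(p^2)/p \ll 1$ by partial summation (the lower cutoff at $\log q$ is what makes the "trivial-range'' contribution $\sum_{p\leq \log q} 1/p = O(\log\log\log q)=O(1)$ acceptable while letting the cancellation take over beyond $\log q$). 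Replacing the range $p \leq x^{1/2}$ by $p \leq \min(\log q, x^{1/2})$ in the $h(p^2)$-sum thus costs only $O(1)$, giving \eqref{logLboundnonquad}.

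The main obstacle is the careful bookkeeping in the second step: one must verify that every simplification made to turn the raw explicit-formula expression into the clean form of \eqref{logLboundgen} — replacing $p^{1/2+1/\log x}$ by $p^{1/2}$ in the $n=p$ term is \emph{not} done (the statement keeps the exact exponent), but the smoothing weight is dropped in the $n=p^2$ term, and the $m\geq 3$ and exponent-discrepancy errors are absorbed — genuinely costs only $O(1)$ with constants depending only on $\mathbf{a}$ (through $a$) and $A$, uniformly in $\chi$, $t_j$, and $x\geq2$. This is routine but must be checked against \eqref{merten} and \eqref{mertenpartialsummation}. The GRH input itself is standard (one invokes the zero-free information exactly as in \cite{Sound2009} or \cite{Szab}), so the novelty here is purely the multi-shift linear combination, which is immediate once the weights $h(n)$ are in place.
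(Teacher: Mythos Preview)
Your outline for \eqref{logLboundgen} is fine and amounts to a reproof of \cite[Proposition~2]{Szab}, which the paper simply cites.  The gap is in your passage from \eqref{logLboundgen} to \eqref{logLboundnonquad}.  The intermediate bound you invoke, $\sum_{p\leq y}\chi^2(p)(\log p)/p \ll \log q$, is true (indeed trivially so, by the triangle inequality and \eqref{mertenpartialsummation}), but it is too weak for the partial-summation step you describe.  Writing $V(y)=\sum_{p\leq y}\chi^2(p)p^{-2it_j}(\log p)/p$ and assuming only $|V(y)|\ll\log q$, Abel summation with the weight $1/\log p$ gives
\[
\sum_{Y<p\leq Z}\frac{\chi^2(p)p^{-2it_j}}{p}
=\frac{V(Z)}{\log Z}-\frac{V(Y)}{\log Y}+\int_Y^Z\frac{V(u)}{u(\log u)^2}\,\dif u
\ \ll\ \frac{\log q}{\log Y},
\]
and for $Y=\log q$ this is $\ll \log q/\log\log q$, not $O(1)$.  (Relatedly, your parenthetical assertion that $\log\log\log q=O(1)$ is simply false.)

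The missing idea, which is what the paper does, is to insert an intermediate cutoff at a suitable power of $\log q$.  One first observes that if $x^{1/2}\leq\log q$ there is nothing to prove, so assume $\log q\leq x^{1/2}$.  For $\log q<p\leq(\log q)^6$ one estimates trivially via \eqref{merten}:
\[
\Bigg|\sum_{\log q<p\leq(\log q)^6}\frac{h(p^2)\chi(p^2)}{p}\Bigg|
\leq \frac{a}{2}\sum_{\log q<p\leq(\log q)^6}\frac{1}{p}
=\frac{a}{2}\bigl(\log 6+o(1)\bigr)=O(1).
\]
For $(\log q)^6<p\leq x^{1/2}$ one uses instead the \emph{unsummed} GRH prime estimate (the paper's \eqref{PIT}),
\(
\sum_{p\leq y}\chi^2(p)p^{-2it_j}\log p \ll \sqrt{y}\,(\log q)^2,
\)
valid for non-principal $\chi^2$ and $|t_j|\leq q^A$; partial summation with the weight $1/(p\log p)$ then yields a bound $\ll (\log q)^2/(\sqrt{Y}\log Y)$, which for $Y=(\log q)^6$ is $O(1)$.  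Your single-range argument from $Y=\log q$ cannot recover this, because neither the ``divided'' sum $V(y)$ nor the undivided one is small enough at $y\approx\log q$ to beat the $1/\log Y$ loss.
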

\begin{proof}
  We note that the estimation given in \eqref{logLboundgen} follows from \cite[Proposition 2]{Szab}. We may now assume that $\log q \leq x^{1/2}$ and we note from \cite[Lemma 2.5]{G&Zhao24-12} that for any primitive Dirichlet character $\chi$ modulo $q$ and any $t_0 \in \mr$, we have under GRH, 
\begin{align}
\label{PIT}
 \sum_{p \leq x } \chi(p) p^{-it_0}\log p  = &O(\sqrt{x} \left(\log 2q(x+|t_0|))^2 \right), \quad \chi \neq \chi_0, 
\end{align}
 where $\chi_0$ denotes the principal character modulo $q$. \newline
 
   We apply \eqref{PIT} and partial summation to see that
\begin{align}
\label{sumplarge}
 \sum_{(\log q)^6 < p\leq x^{1/2}} \frac{h(p^2)\chi(p^2)}{p}   = O(1).
\end{align}
 We then apply \eqref{merten} to see that
\begin{align}
\label{sumpsmall}
  \sum_{\substack{\log q < p  \leq (\log q)^6}}  \frac{h(p^2)\chi(p^2)}{p} \ll \sum_{\substack{\log q  \leq q \leq (\log q)^6}}
  \frac{1}{p}  =  O(1).
\end{align}

Applying \eqref{sumplarge}--\eqref{sumpsmall} in \eqref{logLboundgen} now leads to the estimation given in \eqref{logLboundnonquad}. This completes the proof of the lemma.
\end{proof}

\subsection{Approximate functional equation}

   For any Dirichlet character $\chi$ modulo $q$, the associated Gauss sum  $\tau(\chi)$ is defined to be
\begin{align*}
  \tau(\chi)=\sum^q_{n=1}\chi(n)e \Big( \frac {n}{q} \Big).
\end{align*}

  If $\chi$ is also primitive, then $L(s, \chi)$ satisfies the functional equation (see \cite[\S 9]{Da}) given by
\begin{align}
\label{fneqnquad}
  \Lambda(s, \chi) := \Big( \frac {q} {\pi} \Big)^{(s+\mathfrak{a})/2}\Gamma \Big( \frac {s+\mathfrak{a}}{2} \Big)L(s, \chi)=
  \frac {i^{\mathfrak{a}}q^{1/2}}{\tau(\chi)}\Lambda(1-s,  \overline{\chi}),
\quad \mbox{where} \quad
  \mathfrak{a}=
\begin{cases}
  0, \quad \mbox{if} \;\; \chi(-1)=1, \\
  1,  \quad \mbox{if} \;\; \chi(-1)=1.
\end{cases}
\end{align}

Now, \eqref{fneqnquad} allows us to deduce the following approximate functional equation for $\prod_{m\leq k} L(\tfrac{1}{2} + i t_m, \chi)$.
\begin{lemma}
\label{lem:AFEpure}
 Let $\chi$ be a primitive Dirichlet character modulo $q$, $k$ a positive integer and ${\bf t}=(t_1,\ldots, t_{k})$ a $k$-tuple of real numbers.  Set
\begin{equation} \label{Wdef}
W_{\mathfrak{a}, \pm {\bf t}}(x) = \frac{1}{2\pi i} \int\limits_{(3)} \frac{e^{s^2}}{s} \left(x\pi^{k/2} \right)^{-s}\prod_{1 \leq m\leq k} \Gamma \Big( \frac {1/2\pm it_m+s+\mathfrak{a}}{2}\Big) \Gamma \Big( \frac {1/2+it_m+\mathfrak{a}}{2}\Big)^{-1}  \dif s,
\end{equation}
and $\tau_{{\bf t}}(n) = \sum_{n_1\cdots n_k = n} n_1^{-it_1} \cdots n_k^{-it_m}$, where $n$, $n_i$, $1\leq i \leq k$ are positive integers.
For any real $X>0$, we have
\begin{align}
\label{Approxfcneqn}
\prod_{1 \leq m\leq k} L(\tfrac{1}{2} + i t_m, \chi) = \sum_n \frac{\tau_{{\bf t}}(n)\chi(n)}{n^{1/2}} W_{\mathfrak{a}, {\bf t}} \Big( \frac {nX}{q^{k/2}} \Big) + \big (\frac {i^{\mathfrak{a}}q^{1/2}}{\tau(\chi)}\big )^k\Big( \frac {q} {\pi} \Big)^{-i\sum_{m \leq k}t_m}\sum_n \frac{\tau_{-{\bf t}}(n)\overline\chi(n)}{n^{1/2}} W_{\mathfrak{a}, -{\bf t}} \Big( \frac {n}{Xq^{k/2}} \Big).
\end{align}
\end{lemma}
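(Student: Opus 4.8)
The plan is to derive the approximate functional equation for the product $\prod_{1\le m\le k} L(\tfrac12+it_m,\chi)$ by the standard contour-shift argument, working with the completed product of the $\Lambda$-functions supplied by \eqref{fneqnquad}. First I would set
\[
G(s) := \prod_{1\le m\le k}\Lambda(\tfrac12+it_m+s,\chi),
\]
and consider the integral
\[
I := \frac{1}{2\pi i}\int_{(3)} \frac{e^{s^2}}{s}\, X^{-s}\, G(s)\, \dif s,
\]
for a parameter $X>0$ (which will later be replaced by $X/q^{k/2}$ after normalising by the archimedean factors). The kernel $e^{s^2}/s$ has a simple pole at $s=0$ with residue $1$, decays rapidly in vertical strips, and is even in $s$; these three properties are exactly what make the symmetrisation work. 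Moving the contour from $\Re s = 3$ to $\Re s = -3$, the only pole crossed is at $s=0$, which contributes $G(0) = \prod_{m}\Lambda(\tfrac12+it_m,\chi)$, i.e.\ the completed product we want.

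Next I would evaluate the shifted integral on $\Re s=-3$. Substituting $s\mapsto -s$ and applying the functional equation \eqref{fneqnquad} to each factor $\Lambda(\tfrac12+it_m-s,\chi) = \frac{i^{\mathfrak a}q^{1/2}}{\tau(\chi)}\Lambda(\tfrac12-it_m+s,\overline\chi)$ turns the integral over $\Re s=-3$ back into one over $\Re s=3$, producing the factor $\big(\frac{i^{\mathfrak a}q^{1/2}}{\tau(\chi)}\big)^k$ together with a dual completed product $\prod_m\Lambda(\tfrac12-it_m+s,\overline\chi)$; here the evenness of $e^{s^2}/s$ (up to the sign from $s\mapsto -s$ on the $1/s$, which is absorbed into the contour orientation) is what keeps the kernel intact. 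Expanding $\Lambda(\tfrac12+it_m+s,\chi) = (q/\pi)^{(1/2+it_m+s+\mathfrak a)/2}\Gamma(\tfrac{1/2+it_m+s+\mathfrak a}{2})L(\tfrac12+it_m+s,\chi)$ in both the main and dual terms, writing $L(\tfrac12+it_m+s,\chi) = \sum_{n_m}\chi(n_m)n_m^{-1/2-it_m-s}$ (valid on $\Re s=3$), and collecting the $n_m$'s into $n=n_1\cdots n_k$ via $\tau_{\bf t}(n)$, I would pull the Dirichlet series out of the integral. The remaining $s$-integral, after dividing through by the $q$-powers and the $\Gamma(\tfrac{1/2+it_m+\mathfrak a}{2})$ normalisers coming from $G(0)$, is precisely $W_{\mathfrak a,\pm{\bf t}}$ as defined in \eqref{Wdef}, with argument $nX/q^{k/2}$ in the main term and $n/(Xq^{k/2})$ in the dual term; the power $(q/\pi)^{-i\sum_m t_m}$ appears from combining the $(q/\pi)^{it_m/2}$ factors of $G(0)$ with the $(q/\pi)^{-it_m/2}$ factors from the dual $\Lambda(\tfrac12-it_m,\overline\chi)$.

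The routine but slightly delicate points are: justifying the interchange of sum and integral (absolute convergence on $\Re s=3$ because $\sum_n \tau_{\bf t}(n)n^{-1/2-\Re s}$ converges and $e^{s^2}$ decays), checking there are no other poles (the $\Gamma$-factors have poles only at $\Re s \le -1/2$ for generic $t_m$, and even when $\mathfrak a$ and a pole of $\Gamma$ collide one can argue the kernel $e^{s^2}/s$ and a small perturbation of the contour avoid trouble, or simply note the relevant half-integers lie outside $[-3,3]$ issues can be sidestepped by taking the contour slightly inside), and bookkeeping the constant $i^{\mathfrak a}q^{1/2}/\tau(\chi)$ and the factor $\pi^{k/2}$ correctly. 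I do not expect any genuine obstacle here; the main thing to get right is the algebra of the archimedean and conductor factors when passing from the completed functions to $L$-functions, and confirming that what survives is exactly the symmetric combination $\Gamma(\tfrac{1/2\pm it_m+s+\mathfrak a}{2})\Gamma(\tfrac{1/2+it_m+\mathfrak a}{2})^{-1}$ displayed in \eqref{Wdef}. This is entirely parallel to the classical derivation of the approximate functional equation (cf.\ the treatment in \cite[\S 5]{iwakow}), only with a $k$-fold product and shifts $it_m$.
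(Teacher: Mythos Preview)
Your proposal is correct and follows essentially the same approach as the paper: define the contour integral $I(X)=\frac{1}{2\pi i}\int_{(3)}\frac{e^{s^2}}{s}X^{-s}\prod_m\Lambda(\tfrac12+it_m+s,\chi)\,\dif s$, shift to $\Re s=-3$ picking up only the pole at $s=0$, apply the functional equation \eqref{fneqnquad} and the change of variable $s\mapsto -s$, then expand into Dirichlet series and divide through by $\prod_m(q/\pi)^{(1/2+it_m+\mathfrak a)/2}\Gamma(\tfrac{1/2+it_m+\mathfrak a}{2})$. One small simplification: since $\chi$ is primitive (and non-principal), each $\Lambda(\tfrac12+it_m+s,\chi)$ is entire, so your worry about $\Gamma$-poles in the strip is unnecessary---the only pole crossed is genuinely $s=0$.
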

\begin{proof}
We start with the integral
\begin{align}
\label{Idef}
I(X) = \frac{1}{2\pi i} \int\limits_{(3)} X^{-s}\frac{e^{s^2}}{s}  \prod_{1 \leq m\leq k} \Lambda(\tfrac{1}{2} + it_m + s, \chi) \dif s.
\end{align}

  We shift the contour of integration in \eqref{Idef} to $\Re s = -3$, encountering only a simple pole at $s = 0$, with residue $\prod_{m\leq k} \Lambda(\tfrac{1}{2} + i t_m, \chi)$.  It follows that
\[
I(X) = \prod_{m\leq k} \Lambda(\tfrac{1}{2} + i t_m, \chi) + \frac{1}{2\pi i} \int\limits_{(-3)}X^{-s}\frac{e^{s^2}}{s} \prod_{m\leq k} \Lambda(\tfrac{1}{2} + it_m + s, \chi) \dif s.
\]

Applying \eqref{fneqnquad} yields
\[
I(X) = \prod_{m\leq k} \Lambda(\tfrac{1}{2} + i t_m, \chi) + \Big( \frac {i^{\mathfrak{a}}q^{1/2}}{\tau(\chi)} \Big)^k\frac{1}{2\pi i} \int\limits_{(-3)}X^{-s}\frac{e^{s^2}}{s} \prod_{m\leq k} \Lambda(\tfrac{1}{2} - it_m - s, \overline \chi) \dif s.
\]

A change of variable $s \rightarrow -s$ in the last integral above leads to
\[
I(X) = \prod_{m\leq k} \Lambda(\tfrac{1}{2} + i t_m, \chi) - \Big (\frac {i^{\mathfrak{a}}q^{1/2}}{\tau(\chi)}\Big )^k\frac{1}{2\pi i} \int\limits_{(3)}X^{s}\frac{e^{s^2}}{s} \prod_{m\leq k} \Lambda(\tfrac{1}{2} - it_m + s,  \overline\chi) \dif s.
\]

Hence
\begin{align}
\label{lambdaexp}
\begin{split}
\prod_{m\leq k} \Lambda(\tfrac{1}{2} + i t_m, \chi) = \frac{1}{2\pi i}\int\limits_{(3)}X^{-s}\frac{e^{s^2}}{s} \prod_{m\leq k} \Lambda(\tfrac{1}{2} + it_m + s, \chi) \dif s+\big (\frac {i^{\mathfrak{a}}q^{1/2}}{\tau(\chi)}\big )^k\frac{1}{2\pi i}    \int\limits_{(3)} X^s\frac{e^{s^2}}{s}  \prod_{m\leq k} \Lambda(\tfrac{1}{2} - it_m + s, \overline \chi) \dif s.
\end{split}
\end{align}

 We now expand the products in the integrands of \eqref{lambdaexp} into Dirichlet series and integrate term by term, getting
\begin{align*}
\begin{split}
\frac{1}{2\pi i} & \int\limits_{(3)}X^{-s}\frac{e^{s^2}}{s} \prod_{m\leq k} \Lambda(\tfrac{1}{2} + it_m + s, \chi) \dif s \\
&=  \sum_n \frac{\tau_{{\bf t}}(n)\chi(n)}{n^{1/2}}  \frac{1}{2\pi i} \int\limits_{(3)} \frac{e^{s^2}}{s} \left(\frac{q^{k/2}}{nX\pi^{k/2}}\right)^s\prod_{m\leq k}\Big( \frac {q} {\pi} \Big)^{(1/2+it_m+\mathfrak{a})/2}\Gamma \Big( \frac {1/2+it_m+s+\mathfrak{a}}{2} \Big) \dif s, \quad \mbox{and} \\
 \frac{1}{2\pi i} & \int\limits_{(3)}X^{s}\frac{e^{s^2}}{s} \prod_{m\leq k} \Lambda(\tfrac{1}{2} - it_m + s, \overline  \chi) \dif s \\
& = \sum_n \frac{\tau_{-{\bf t}}(n)\overline \chi(n)}{n^{1/2}}  \frac{1}{2\pi i} \int\limits_{(3)} \frac{e^{s^2}}{s} \left(\frac{Xq^{k/2}}{n\pi^{k/2}}\right)^s\prod_{m\leq k}\Big( \frac {q} {\pi} \Big)^{(1/2-it_m+\mathfrak{a})/2}\Gamma \Big( \frac {1/2-it_m+s+\mathfrak{a}}{2} \Big) \dif s.
\end{split}
\end{align*}

Inserting the above expressions into \eqref{lambdaexp} and then divide on both sides of the resulting formulaby  $$\prod_{m\leq k}\Big( \frac {q} {\pi} \Big)^{(1/2+it_m+\mathfrak{a})/2}\Gamma \Big( \frac {1/2+it_m+\mathfrak{a}}{2} \Big)$$
gives the desired expression in \eqref{Approxfcneqn}, completing the proof of the lemma.
\end{proof}

    Note that shifting the contour in \eqref {Wdef} to $\Re s = -1/4$ or to $\Re s =c$ for any $c>0$ renders that for any real $x>0$, 
\begin{align}
\label{Westimation}
 W_{\pm 1, {\bf t}}(x)=\begin{cases}
 C(\pm 1, {\bf t})+O(x^{1/4}), \\
 O_c(x^{-c}),
\end{cases} 
\end{align}  
  where $C(\pm 1, {\bf t})$ are constants depending on $\pm 1$ and ${\bf t}$. 

\subsection{Estimations on Dirichlet polynomials}

 Let $k$ be a positive integer. For a $k$-tuple of positive constants ${\bf a}=(a_1,a_2,\ldots, a_{k})$, write
\begin{equation}
\label{aastdef}
 a_\ast := \sum_{j\leq k} \max(1, a_j).
\end{equation}
For any primitive Dirichlet character $\chi$ modulo $q$, we define a sequence of parameters $P_j = q^{c_j}$ by
\[ c_0 = 0 \quad \text{and} \quad c_j = \frac{e^{j}}{(\log\log q)^2}, \quad j>0. \]

Let $R$ be the largest integer such that $P_R \leq q^{\delta}$ for some constant $0 < \delta < e^{-1000a_\ast}$ depending on $\bf{a}$ to be chosen later.
We write for any $x>0$,
\[ \mathcal{P}_{1,x}(s,\chi) = \sum_{p \leq P_1} \frac{\chi(p)}{p^{s+1/\log x}} \frac{\log (x/p)}{\log x} + \sum_{p\leq \log q} \frac{\chi(p^2)}{2p^{2s}} \quad \mbox{and} \quad \mathcal{P}_{j,x}(s, \chi) = \sum_{p\in (P_{j-1},P_j]} \frac{\chi(p)}{p^{s+1/\log x}} \frac{\log (x/p)}{\log x}, \quad 2\leq j \leq R. \]

   Let $K_j = c_j^{-3/4}$ for $1 \leq j \leq R$.  Define for any real number $\beta$, a Dirichlet polynomial $\mathcal{N}_{j,x}(s, \chi;\beta)$ by
\[\label{eq:TaylorExp}
\mathcal{N}_{j,x}(s, \chi;\beta) := \sum_{1 \leq n \leq 100a_\ast^2 K_j} \frac{\beta^n \mathcal{P}_{j,x}(s, \chi)^n}{n!}.
\]

  By a straightforward modification of the proof of \cite[Lemma 2.2]{Curran24},  we arrive at the following result which may be summarized as follows.  For $\mathcal{P}_{j,x}(s, \chi)$ not too large, one can approximate $\exp(\beta \mathcal{P}_{j,x} (s, \chi))$ by $\mathcal{N}_{j,x}(s, \chi;\beta)$.
\begin{lemma}
\label{lem:expTaylorSeries}
If $\beta \leq a_\ast$ and $|\mathcal{P}_{j,x}(s, \chi)| \leq K_j$ for some $1\leq j \leq R$, then
\[
\exp(\beta \mathcal{P}_{j,x}(s, \chi)) = (1+ O(e^{-50a_\ast^2 K_j}))^{-1} \mathcal{N}_{j,x}(s,\chi;\beta).
\]
\end{lemma}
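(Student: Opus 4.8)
The plan is to follow the proof of \cite[Lemma 2.2]{Curran24} with only notational changes. Abbreviate $P=\mathcal{P}_{j,x}(s,\chi)$ and $N=\lfloor 100a_\ast^2K_j\rfloor$, and observe that $\mathcal{N}_{j,x}(s,\chi;\beta)$ is nothing but the truncation at the term $n=N$ of the Taylor series $\exp(\beta P)=\sum_{n\ge 0}\frac{(\beta P)^n}{n!}$. Hence
\[
\exp(\beta P)-\mathcal{N}_{j,x}(s,\chi;\beta)=\sum_{n>N}\frac{(\beta P)^n}{n!},
\]
and it is enough to prove that the right-hand side is $\ll e^{-50a_\ast^2K_j}\,|\exp(\beta P)|$: dividing by $\exp(\beta P)$ then gives $\mathcal{N}_{j,x}(s,\chi;\beta)=\exp(\beta P)\big(1+O(e^{-50a_\ast^2K_j})\big)$, and since $K_j=c_j^{-3/4}\ge 1$ (indeed $c_j\le\delta<e^{-1000a_\ast}$ for $j\le R$), the error term is $o(1)$ and the factor $1+O(e^{-50a_\ast^2K_j})$ may be inverted, yielding the stated identity.

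To estimate the tail, note that the hypotheses $\beta\le a_\ast$ and $|P|\le K_j$ give $|\beta P|\le a_\ast K_j$, and that $a_\ast\ge1$ because $k\ge1$ forces each $\max(1,a_j)\ge1$ in \eqref{aastdef}. For every integer $n>N$ one has $n>100a_\ast^2K_j$, so Stirling's bound $n!\ge(n/e)^n$ yields
\[
\frac{|\beta P|^n}{n!}\le\Big(\frac{e\,a_\ast K_j}{n}\Big)^n<\Big(\frac{e}{100a_\ast}\Big)^n\le\Big(\frac{e}{100}\Big)^n .
\]
Summing this geometric series and using $e^2<100$, a short computation gives
\[
\Big|\sum_{n>N}\frac{(\beta P)^n}{n!}\Big|\le 2\Big(\frac{e}{100}\Big)^{100a_\ast^2K_j}\le e^{-100a_\ast^2K_j}.
\]
On the other hand $|\exp(\beta P)|=e^{\beta\Re P}\ge e^{-\beta|P|}\ge e^{-a_\ast K_j}$, so
\[
\Big|\sum_{n>N}\frac{(\beta P)^n}{n!}\Big|\Big/\,|\exp(\beta P)|\ \le\ e^{-100a_\ast^2K_j+a_\ast K_j}\ \le\ e^{-50a_\ast^2K_j},
\]
where the last inequality uses $100a_\ast^2-a_\ast=a_\ast(100a_\ast-1)\ge 50a_\ast^2$ (valid since $a_\ast\ge1$). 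This is the required bound.

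There is no substantial obstacle here — the lemma is a routine truncated-Taylor-series estimate — and the only point that needs a little care is the numerology. The truncation length $100a_\ast^2K_j$ must be chosen large enough, first, that Stirling's formula forces geometric decay of the tail of the exponential series past that point (which needs it to dominate $|\beta\mathcal{P}_{j,x}(s,\chi)|\le a_\ast K_j$), and second, that after dividing by the a priori only exponentially small quantity $|\exp(\beta\mathcal{P}_{j,x}(s,\chi))|\ge e^{-a_\ast K_j}$ the surviving exponent still comfortably exceeds $50a_\ast^2K_j$. Both reductions rest on the inequality $a_\ast\ge1$, which makes every comparison of the shape ``$Ca_\ast^2K_j$ versus $a_\ast K_j$'' go the right way; together with Stirling's formula and the two hypotheses $\beta\le a_\ast$ and $|\mathcal{P}_{j,x}(s,\chi)|\le K_j$ this is all that is needed. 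The largeness of $K_j$ (equivalently the smallness of $\delta$) is not needed for this lemma beyond absorbing harmless absolute constants.
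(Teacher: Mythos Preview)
Your proof is correct and follows precisely the approach the paper indicates, namely a straightforward adaptation of \cite[Lemma~2.2]{Curran24} via Stirling's bound on the tail of the exponential series. One cosmetic remark: your intermediate inequality $e^{\beta\Re P}\ge e^{-\beta|P|}$ tacitly assumes $\beta\ge0$, whereas the lemma (and its applications in the paper, e.g.\ with $\beta=a_m-k$) allows negative $\beta$; the clean fix is to write $|\beta\Re P|\le|\beta|\,|P|\le a_\ast K_j$ directly, which yields the same lower bound $|\exp(\beta P)|\ge e^{-a_\ast K_j}$ and leaves the rest of your argument unchanged.
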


Let $a_x(p)$ denote $\log(x/p)  p^{-1/\log x}/\log x$ and define multiplicative functions $g_x$ and $h_x$ by
\[
g_x(p^r; a) :=  \frac{a^r a_x(p)^{r}}{r!} \quad \mbox{and} \quad
h_x(p^r;a) :=  g_x(p^r; a) + {\bf 1}_{p\leq \log q} \sum_{u = 1}^{r/2} \frac{a^{r-2u} a_x(p)^{r-2u}}{2^u u! (r-2u)!} .
\]
We further define $c_j(n) : \natn \to \{ 0, 1 \}$ for $1 \leq j \leq R$.  Set $c_{1}(n)=1$ if and only if $n$ can be written as $n = n_1 \cdots n_r$ where $r \leq 100a_\ast^2 K_1$ and each $n_i$ is either a prime $\leq P_1$ or a prime square $\leq \log q$.  For other $c_j(n)$ with $j \geq 2$, $c_j(n)=1$ if and only if $n$ is the product of at most $100a_\ast^2 K_j$ (not necessarily distinct) primes in $(P_{j-1},P_j]$. \newline

Using the above notations and \cite[proposition 3.1]{Curran}, we deduce the next lemma concerning the coefficients $\mathcal{N}_{j,x}(s, \chi;\beta)$.
\begin{prop}\label{prop:Njcoeffs}
 With the notation as above, we have for $2\leq j \leq R$,
\[
\mathcal{N}_{j,x}(s, \chi;\beta) = \sum_{p\mid n \Rightarrow p \in (P_{j-1},P_j] } \frac{g_x(n;\beta)\chi(n)c_{j}(n)}{n^s}.
\]
Furthermore, if
\[
\mathcal{N}_{1,x}(s, \chi;\beta) = \sum_{p\mid n \Rightarrow p \in (P_{j-1},P_j]} \frac{f_x(n;\beta)\chi(n)}{n^s},
\]
then $f_x(n;\beta) \leq h_x(n;\beta) c_{1}(n)$ and $f_x(p;\beta) = g_x(p;\beta)$.
\end{prop}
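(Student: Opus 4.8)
The plan is to prove all three assertions by expanding the powers of the Dirichlet polynomials $\mathcal{P}_{j,x}(s,\chi)$ with the multinomial theorem and reading off the coefficients as values of the multiplicative functions $g_x$ and $h_x$, following the arguments in \cite[Lemma 2.2]{Curran24} and \cite[Proposition 3.1]{Curran}. Write $a_x(p)=\log(x/p)\,p^{-1/\log x}/\log x$, so that for $2\le j\le R$ one has $\mathcal{P}_{j,x}(s,\chi)=\sum_{p\in(P_{j-1},P_j]}a_x(p)\chi(p)p^{-s}$, while $\mathcal{P}_{1,x}(s,\chi)=A(s,\chi)+B(s,\chi)$ with $A(s,\chi)=\sum_{p\le P_1}a_x(p)\chi(p)p^{-s}$ and $B(s,\chi)=\sum_{p\le\log q}\tfrac12\chi(p^2)(p^2)^{-s}$. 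Throughout, $\Omega(n)$ denotes the number of prime factors of $n$ counted with multiplicity.

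For $2\le j\le R$, I would fix $m$ with $1\le m\le 100a_\ast^2K_j$ and expand $\mathcal{P}_{j,x}(s,\chi)^m$ by the multinomial theorem, grouping the ordered $m$-tuples of primes from $(P_{j-1},P_j]$ according to the integer $n=\prod_p p^{r_p}$ they build. The multinomial coefficient $m!/\prod_p r_p!$ cancels the factor $1/m!$, and $\beta^m=\prod_p\beta^{r_p}$ distributes over the primes, so the coefficient of $\chi(n)n^{-s}$ in $\beta^m\mathcal{P}_{j,x}(s,\chi)^m/m!$ equals $\prod_p \beta^{r_p}a_x(p)^{r_p}/r_p!=g_x(n;\beta)$ by multiplicativity of $g_x$. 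Summing over $1\le m\le 100a_\ast^2K_j$, the integers that occur are precisely those whose prime factors all lie in $(P_{j-1},P_j]$ and which satisfy $\Omega(n)\le 100a_\ast^2K_j$ — that is, exactly the support of $c_j$ — and this gives the claimed identity.

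For $j=1$ the only new feature is the prime-square block $B(s,\chi)$. I would write $\beta^m\mathcal{P}_{1,x}(s,\chi)^m/m!=\sum_{a+b=m}(\beta^{a}A(s,\chi)^{a}/a!)\,(\beta^{b}B(s,\chi)^{b}/b!)$, expand $A^{a}$ and $B^{b}$ separately by the multinomial theorem as before, and collect the coefficient of $\chi(n)n^{-s}$; recording the contribution of the $A$-factor as an integer $n_1$ with all prime factors $\le P_1$ and that of the $B$-factor as $\ell^{2}$, where $\ell=\prod_p p^{u_p}$ has all prime factors $\le\log q$, this yields
\[
f_x(n;\beta)=\sum_{\substack{n=n_1\ell^{2}\\ \Omega(n_1)+\Omega(\ell)\le 100a_\ast^2K_1}} g_x(n_1;\beta)\prod_{p}\frac{\beta^{u_p}}{2^{u_p}\,u_p!},
\]
the sum being over factorizations of $n$ into a $P_1$-smooth part $n_1$ and the square of a $\log q$-smooth number $\ell$. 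Every summand is non-negative, so dropping the constraint $\Omega(n_1)+\Omega(\ell)\le 100a_\ast^2K_1$ only enlarges the sum; the enlarged sum $\widetilde f_x$ is multiplicative, with $\widetilde f_x(p^{r};\beta)=\sum_{0\le 2u\le r}g_x(p^{r-2u};\beta)\,\beta^{u}/(2^{u}u!)$ when $p\le\log q$ and $\widetilde f_x(p^{r};\beta)=g_x(p^{r};\beta)$ otherwise, and a termwise comparison with the definition of $h_x$ gives $f_x(n;\beta)\le\widetilde f_x(n;\beta)\le h_x(n;\beta)$. Furthermore $\Omega(n_1)+\Omega(\ell)\le 100a_\ast^2K_1$ says precisely that $n$ is a product of at most $100a_\ast^2K_1$ factors, each a prime $\le P_1$ or a prime square $p^{2}$ with $p\le\log q$, i.e. $c_1(n)=1$, so $f_x(n;\beta)\le h_x(n;\beta)c_1(n)$; and when $n=p$ is prime the only admissible factorization is $n_1=p$, $\ell=1$, so $f_x(p;\beta)=g_x(p;\beta)$.

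The multinomial bookkeeping for $j\ge 2$ is harmless; the step that needs care is the $j=1$ analysis — setting up the factorizations $n=n_1\ell^{2}$ cleanly, verifying the multiplicativity of $\widetilde f_x$, and matching the length constraint $\Omega(n_1)+\Omega(\ell)\le 100a_\ast^2K_1$ with the definition of $c_1$. The termwise inequality $\widetilde f_x(p^{r};\beta)\le h_x(p^{r};\beta)$ is elementary but is exactly the point where the precise shape of $h_x$ and the admissible range of the parameter $\beta$ come into play.
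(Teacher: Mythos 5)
Your strategy -- expanding the powers of $\mathcal{P}_{j,x}(s,\chi)$ by the multinomial theorem and reading off the coefficients as values of multiplicative functions -- is exactly the standard argument behind this statement (the paper itself gives no proof, deducing the proposition directly from \cite[Proposition 3.1]{Curran}). Your treatment of the case $2\le j\le R$ is correct, as are your explicit formula for $f_x(n;\beta)$ as a sum over factorizations $n=n_1\ell^2$ and the observation $f_x(p;\beta)=g_x(p;\beta)$ for primes $p\le P_1$.

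The step that does not go through as written is the final termwise comparison $\widetilde f_x(p^r;\beta)\le h_x(p^r;\beta)$. Your own formula shows that, for $p\le\log q$, the $u$-th term of $\widetilde f_x(p^r;\beta)$ is $\beta^{r-u}a_x(p)^{r-2u}/\bigl(2^u\,u!\,(r-2u)!\bigr)$, because the prime-square block contributes an extra factor $\beta^u$; the corresponding term of $h_x(p^r;\beta)$, as defined in the paper, is $\beta^{r-2u}a_x(p)^{r-2u}/\bigl(2^u\,u!\,(r-2u)!\bigr)$. Thus the termwise inequality requires $\beta^u\le 1$, i.e.\ $0\le\beta\le1$, and it genuinely fails for $\beta>1$: already for $r=2$ and $p\le\log q$ one has $f_x(p^2;\beta)=g_x(p^2;\beta)+\beta/2$ while $h_x(p^2;\beta)c_1(p^2)=g_x(p^2;\beta)+\tfrac12$. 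Likewise your Rankin-type step (``dropping the length constraint only enlarges the sum'') needs $\beta\ge0$. Since the proposition is later invoked with parameters such as $\beta=a_m$, $\tfrac12 a_m$, $k$, $2(a_m-k)$ and $\tfrac12(a_m-1)$, which may exceed $1$ or be negative, you should either record the restriction $0\le\beta\le1$, or prove the bound in the form $|f_x(p^r;\beta)|\le$ the analogue of $h_x$ with $|\beta|^{r-u}$ (equivalently, with an extra factor $\max(1,|\beta|)^{u}$) in place of $\beta^{r-2u}$; the downstream estimates of Lemma \ref{lem:nCoeffPatrol}, which only use majorants of the shape $a_\ast^r$ times combinatorial factors, are unaffected by this correction. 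Note that this friction sits at the interface between the paper's stated definition of $h_x$ and the claimed inequality (and so arguably affects the statement of Proposition \ref{prop:Njcoeffs} itself, as transcribed from Curran), but as a proof of the statement in the generality in which it is later applied, the asserted comparison needs this repair.
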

\noindent

  We now consider the coefficients of the Dirichlet polynomials given by
\[
\prod_{1 \leq m\leq k}  \mathcal{N}_{j,x}(s+ i t_m, \chi; a_m) := \sum_{n} \frac{b_{j,x,\bf{t},\bf{a}}(n)\chi(n)}{n^{s}}.
\]

  It follows from our discussions above that $b_{1,x,\bf{t},\bf{a}}(n)$ is the $k$-fold Dirichlet convolution of $f_x(n;a_m) n^{-it_m}$ and
  $b_{j,x,\bf{t},\bf{a}}(n)$ is the $k$-fold convolution of $g_x(n;a_m) c_{j}(n) n^{-it_m}$ for $2 \leq j \leq R$.
We further define $b'_{j,x,\bf{t},\bf{a}}(n)$ to be the $k$-fold convolution of $h_x(n;a_m) n^{-it_m} {\bf 1}_{p|n \Rightarrow p\in (P_0,P_1]}$ for $j=1$ and
the $k$-fold  convolution  of $g_x(n;a_m) n^{-it_m} {\bf 1}_{p|n \Rightarrow p\in (P_{j-1},P_j]}$ for $2\leq j \leq R$.
Also, let $b''_{1,x,\bf{t},\bf{a}}(n)$ be the $k$-fold  convolution of $h_x(n;a_m) {\bf 1}_{p|n \Rightarrow p\in (P_0,P_1]}$ when $j= 1$ and the $k$-fold
convolution of $g_x(n;a_m) {\bf 1}_{p|n \Rightarrow p\in (P_{j-1},P_j]}$ when  $2\leq j \leq R$.
It follows that both $b'_{j,x,\bf{t},\bf{a}}$ and $b''_{j,x,\bf{t},\bf{a}}$ are multiplicative and we have $|b_{j,x,\bf{t},\bf{a}}(n)|$, $|b'_{j,x,\bf{t},\bf{a}}(n)| \leq b''_{j,x,\bf{t},\bf{a}}(n)$.  From \cite[Lemma 3.2]{Curran}, we gather some information on these coefficients.
\begin{lemma}\label{lem:nCoeffPatrol}
For $1\leq j \leq R$ and $p \in (P_{j-1},P_j]$, we have
\[
b_{j, x, \bf{t}, \bf{a}}(p) = a_x(p) \sum_{m = 1}^k a_m p^{-it_m},
\]
and $b''_{j, x, \bf{t}, \bf{a}}(p) \leq a_\ast$. For $r \geq 2$, the bound
\[
b''_{j, x, \bf{t}, \bf{a}}(p^r) \leq \frac{a_\ast^r k^r}{r!}
\]
holds whenever $2\leq j \leq R$ or $p > \log q$, and otherwise
\[
b''_{1, x, \bf{t}, \bf{a}}(p^r) \leq  k a_\ast^r r^{2k} e^{-r \log (r/k)/2k + 2r}.
\]
\end{lemma}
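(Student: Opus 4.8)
The plan is to recognize that Lemma~\ref{lem:nCoeffPatrol} is a restatement of \cite[Lemma 3.2]{Curran} in the present context, the only structural difference being that the prime-square terms $\chi(p^2)/(2p^{2s})$ enter only for $p\le\log q$; so the whole argument reduces to four short local computations. For the two assertions about values at a prime $p\in(P_{j-1},P_j]$, I would use that a $k$-fold Dirichlet convolution at a prime is obtained by letting exactly one of the $k$ factors carry $p$ and the rest carry $1$. For $2\le j\le R$ the $m$-th factor has coefficient $g_x(p;a_m)c_j(p)p^{-it_m}$ at $p$; since $g_x(p;a_m)=a_m a_x(p)$, $g_x(1;a_m)=c_j(1)=c_j(p)=1$, this gives $b_{j,x,{\bf t},{\bf a}}(p)=a_x(p)\sum_{m=1}^k a_m p^{-it_m}$, and for $j=1$ the same follows from $f_x(p;\beta)=g_x(p;\beta)$ in Proposition~\ref{prop:Njcoeffs}. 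For the bound $b''_{j,x,{\bf t},{\bf a}}(p)\le a_\ast$ I would note that $h_x(p;a_m)=g_x(p;a_m)$ because the sum $\sum_{u=1}^{r/2}$ defining $h_x(p^r;a)$ is empty at $r=1$, so in all cases $b''_{j,x,{\bf t},{\bf a}}(p)=\sum_{m\le k}g_x(p;a_m)=a_x(p)\sum_{m\le k}a_m\le a_\ast$, using $0\le a_x(p)\le1$ and $\sum_{m\le k}a_m\le\sum_{m\le k}\max(1,a_m)=a_\ast$.

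For $r\ge2$ when $2\le j\le R$, or when $j=1$ and $p>\log q$, the coefficient $b''_{j,x,{\bf t},{\bf a}}(p^r)$ equals the value at $p^r$ of the $k$-fold convolution of the multiplicative functions $n\mapsto g_x(n;a_m)\mathbf 1_{p\mid n\Rightarrow p\in(P_{j-1},P_j]}$ (when $j=1$ and $p>\log q$ the indicator $\mathbf 1_{p\le\log q}$ vanishes, so $h_x(p^{r'};a_m)=g_x(p^{r'};a_m)$). From the generating identity $\sum_{r'\ge0}g_x(p^{r'};a)z^{r'}=e^{a\,a_x(p)z}$, writing $a=a_1+\cdots+a_k$, I would extract
\[
b''_{j,x,{\bf t},{\bf a}}(p^r)=[z^r]\,e^{a\,a_x(p)z}=\frac{(a\,a_x(p))^r}{r!}\le\frac{a_\ast^r}{r!}\le\frac{a_\ast^r k^r}{r!},
\]
using $a\le a_\ast$ and $a_x(p)\le1$.

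The remaining case, $r\ge2$ with $j=1$ and $p\le\log q$, is the heart of the matter. Here $b''_{1,x,{\bf t},{\bf a}}(p^r)$ is the value at $p^r$ of the $k$-fold convolution of $n\mapsto h_x(n;a_m)\mathbf 1_{p\mid n\Rightarrow p\in(P_0,P_1]}$, and (noting $\log q\le P_1$ for large $q$) a short computation splitting $r'=2u+v$ gives $\sum_{r'\ge0}h_x(p^{r'};a)z^{r'}=e^{a\,a_x(p)z}(1+(e^{z^2/2}-1))=e^{a\,a_x(p)z+z^2/2}$. Hence, with $a=a_1+\cdots+a_k$,
\[
b''_{1,x,{\bf t},{\bf a}}(p^r)=[z^r]\,e^{a\,a_x(p)z+kz^2/2}=\sum_{i+2l=r}\frac{(a\,a_x(p))^i k^l}{i!\,l!\,2^l}\le\sum_{i+2l=r}\frac{a_\ast^i k^l}{i!\,l!\,2^l}.
\]
Since $a_\ast=\sum_{m\le k}\max(1,a_m)\ge k\ge1$, we have $a_\ast^i k^l\le a_\ast^{i+l}\le a_\ast^r$, so the right side is $\le a_\ast^r[z^r]e^{z+z^2/2}$, and I would bound $[z^r]e^{z+z^2/2}\le R^{-r}e^{R+R^2/2}$ for any $R>0$, taking $R=\sqrt r$ to get $[z^r]e^{z+z^2/2}\le r^{-r/2}e^{r/2+\sqrt r}$. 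Comparing with the claimed bound amounts to the elementary inequality $r^{-r/2}e^{r/2+\sqrt r}\le k\,r^{2k}e^{-r\log(r/k)/2k+2r}$, which holds for all $r\ge2$, $k\ge1$ since $r^{2k}e^{2r}\ge e^{r/2+\sqrt r}$ and $e^{-r\log(r/k)/2k}\ge r^{-r/2}$ for $k\ge1$.

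The main obstacle I anticipate is not any single step but the bookkeeping: confirming that the coefficient structure of $b_{j,x,{\bf t},{\bf a}}$, $b'_{j,x,{\bf t},{\bf a}}$, $b''_{j,x,{\bf t},{\bf a}}$ transfers verbatim from the zeta-function setting of \cite[Lemma 3.2]{Curran}, and that the only change---restricting the prime-square terms to $p\le\log q$---leaves the cases $p>\log q$ strictly cleaner rather than breaking anything, while the shifts $t_m$ only contribute unimodular factors $p^{-it_m}$ invisible to $b''$. The restriction $k=2$ in Theorem~\ref{thm:main} plays no role here: all of the above is uniform in $k$.
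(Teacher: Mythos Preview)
Your proposal is correct and follows the same approach as the paper, which simply cites \cite[Lemma 3.2]{Curran} without writing out the details; you have reconstructed that computation explicitly and adapted it to the present setting where the prime-square terms are restricted to $p\le\log q$. The generating-function calculation $\sum_{r'\ge0}h_x(p^{r'};a)z^{r'}=e^{a\,a_x(p)z+z^2/2}$ and the subsequent saddle-point bound are exactly what lies behind Curran's lemma, so nothing further is needed.
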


\section{Proof of Theorem \ref{thm:main}}

   We first note that upon setting $k=a_1=1, x=\log q$ in \eqref{logLboundgen} and estimating the summations there trivially implies that for some constant $C>0$, 
\begin{align}
\label{logLupperboundgen}
& |L(\frac12+it, \chi)|  \ll \exp(\frac {C\log q}{\log \log q}). 
\end{align}   
  On the other hand, we note that (see \cite[Theorem 2.9]{MVa1}) for $q \geq 3$, 
\begin{align*}
& \varphi(q) \gg \frac {q}{\log \log q}. 
\end{align*}  

   It follows from the above that in order to establish Theorem \ref{thm:main}, we may ignore the presence of the finitely many quadrati Dirichlet character. In particular, we may assume that \eqref{logLboundnonquad} holds for all Dirichlet characters throughout the proof. 
   
  As mentioned in the Section \ref{sec 1}, $k$ is an arbitrary (but fixed) positive integer throughout the proof unless otherwise specified. We define
\begin{align*}
\mathcal{S}_0 :=& \sum_{\chi \in \mathcal{T}_k} \prod_{1 \leq m\leq k} L(\tfrac{1}{2} + it_m, \chi) \prod_{1 \leq j\leq R} \exp\left((a_m-1)
\mathcal{P}_{j,P_R}(\tfrac{1}{2}+it_m, \chi) + a_m  \mathcal{P}_{j,P_R}(\tfrac{1}{2}-it_m, \overline \chi) \right) ,
\end{align*}
  where
\begin{equation*}
\mathcal{T}_k := \left\{\chi\in X_q^*: |P_{j,P_R} (\tfrac{1}{2} + i t_m, \chi)| \leq K_j \text{ for all } 1\leq j\leq R, 1 \leq m \leq k \right\}.
\end{equation*}
 Note, mindful of Lemma~\ref{lem:expTaylorSeries}, that $\mathcal{T}_k$ is the set of characters $\chi\in X_q^*$ such that $\mathcal{N}_{j,P_R}(\tfrac{1}{2} + i t_m, \chi;\beta)$  well approximates $\exp(\beta \mathcal{P}_{j,P_R} (\tfrac{1}{2} + i t_m, \chi))$ for all $1 \leq m \leq k$.  We also note that $\mathcal{S}_0$ is regarded as an approximation to $M_{\bf{t}, \bf{a}}(q)$. \newline

  We define real numbers $u, v, r_m, 1\leq m \leq k$ such that
\[
\frac{1}{u} = \frac{1}{4a_\ast}, \quad \frac{1}{r_m} = \frac{1}{2k} - \frac{a_m}{4ka_\ast}, \quad \frac{1}{v} = 1 - \frac{1}{u} - \sum_{m\leq k} \frac{1}{r_m}.
\]
We have $u$, $v$, $r_m > 1$ for all $1\leq m \leq k$ based on the definition of $a_\ast$ in \eqref{aastdef}. Moreover,
\[
\frac{2a_m}{u}+\frac {2k}{r_m} = \frac{1}{u}+\frac{1}{v} + \sum_{m\leq k} \frac{1}{r_m}=1.
\]

 We now recast $\mathcal{S}_0$ as
\begin{align*}
\begin{split}
\mathcal{S}_0 =& \sum_{\chi \in \mathcal{T}_k} \Big (\prod_{1 \leq m\leq k} L(\tfrac{1}{2} + it_m, \chi)^{2a_m/u} \Big ) \Big (\prod_{1 \leq m\leq k}\prod_{1 \leq j\leq R} \exp\Big(\frac {2a_m}v \Re \mathcal{P}_{j,P_R}(\tfrac{1}{2}+it_m,
\chi) \Big)\Big ) \\
&  \hspace*{1cm} \times \prod_{1 \leq m\leq k} \left( L(\tfrac{1}{2} + it_m, \chi)^{2k/r_m}\prod_{1 \leq j\leq R} \exp\left(2a_m\Big( 1-\frac 1v \Big) \Re \mathcal{P}_{j,P_R}(\tfrac{1}{2}+it_m, \chi)-\mathcal{P}_{j,P_R}(\tfrac{1}{2}+it_m, \chi) \right) \right) \\
=& \sum_{\chi \in \mathcal{T}_k} \Big (\prod_{1 \leq m\leq k} L(\tfrac{1}{2} + it_m, \chi)^{2a_m/u} \Big ) \left (\prod_{1 \leq m\leq k}\prod_{1 \leq j\leq R} \exp\left(\frac {2a_m}v \Re  \mathcal{P}_{j,P_R}(\tfrac{1}{2}+it_m,
\chi) \right)\right ) \\
& \times  \prod_{1 \leq m\leq k} L(\tfrac{1}{2} + it_m, \chi)^{2k/r_m} \\
& \times \prod_{1 \leq m\leq k} \prod_{1 \leq j\leq R} \exp\left(\Big(\sum_{1 \leq s\leq k} \frac{1}{r_s}\Big)2a_m \Re \mathcal{P}_{j,P_R}(\tfrac{1}{2}+it_m, \chi)- \mathcal{P}_{j,P_R}(\tfrac{1}{2}+it_m, \chi)+ \frac{2a_m}{u} \Re \mathcal{P}_{j,P_R}(\tfrac{1}{2}+it_m, \chi)\Big)\right) \\
=& \sum_{\chi \in \mathcal{T}_k} \Big (\prod_{1 \leq m\leq k} L(\tfrac{1}{2} + it_m, \chi)^{2a_m/u} \Big ) \Big (\prod_{1 \leq m\leq k}\prod_{1 \leq j\leq R} \exp\Big(\frac {2a_m}v \Re \mathcal{P}_{j,P_R}(\tfrac{1}{2}+it_m,
\chi) \Big)\Big ) \\
& \hspace*{1cm} \times \prod_{1 \leq m\leq k} \Big ( L(\tfrac{1}{2} + it_m, \chi)^{2k/r_m} \prod_{1 \leq \ell \leq k} \prod_{1 \leq j\leq R} \exp\Big(\frac{2a_{\ell}}{r_m} \Re \mathcal{P}_{j,P_R}(\tfrac{1}{2}+it_{\ell}, \chi) \Big)  \\
& \hspace*{1cm} \times \prod_{1 \leq j\leq R}\exp\Big( - \mathcal{P}_{j,P_R}(\tfrac{1}{2}+it_m, \chi)+ \frac{2a_m}{u} \Re \mathcal{P}_{j,P_R}(\tfrac{1}{2}+it_m, \chi)\Big) \Big).
\end{split}
\end{align*}

Now Hölder's inequality applied the last expression above leads to
\begin{equation} \label{S0bound}
|\mathcal{S}_0| \leq M_{\bf{t}, \bf{a}}(q)^{1/u} \times |\mathcal{J}|^{1/v} \times \prod_{1 \leq m \leq k} |\mathcal{S}_m|^{1/r_m},
\end{equation}
where
\begin{align*}
\begin{split}
\mathcal{S}_m =& \sum_{\chi \in \mathcal{T}_k} |L(\tfrac{1}{2} + it_m, \chi)|^{2k} \prod_{1 \leq j\leq R} \exp\left(2(a_m-k)\Re \mathcal{P}_{j,P_R}(\tfrac{1}{2}+it_m, \chi) \right)
\prod_{\substack{1 \leq \ell \leq k \\ \ell \neq m}} \prod_{1 \leq j\leq R} \exp\left(2a_\ell\Re \mathcal{P}_{j,P_R}(\tfrac{1}{2}+it_\ell, \chi) \right), \quad \mbox{and} \\
\mathcal{J} =& \sum_{\chi \in \mathcal{T}_k} \prod_{1 \leq m\leq k}\prod_{1 \leq j\leq R} \exp\left(2a_m\Re \mathcal{P}_{j,P_R}(\tfrac{1}{2}+it_m,
\chi) \right).
\end{split}
\end{align*}

  We see from \eqref{S0bound} that in order to prove Theorem \ref{thm:main}, it suffices to establish the following three propositions.
\begin{prop}\label{prop:JBound}
 We have for large $q$'s,
\[
\mathcal{J} \ll_{\bf{a}}  \varphi(q) (\log q)^{a_1^2 + \cdots + a_k^2} \prod_{1\leq j < l \leq k} |\zeta(1 + i(t_j - t_l) + 1/ \log q )|^{2a_ja_l}.
\]
\end{prop}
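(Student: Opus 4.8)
The plan is to bound $\mathcal{J}$ from above by a second moment of a short Dirichlet polynomial averaged over \emph{all} characters modulo $q$, to evaluate this average by orthogonality, and to estimate the resulting diagonal term with the Mertens‑type formulas of Lemma~\ref{RS}; the argument works for every $k\ge1$. On $\mathcal{T}_k$ one has $|\mathcal{P}_{j,P_R}(\tfrac12+it_m,\chi)|\le K_j$, and since the coefficients of $\mathcal{P}_{j,x}$ are real, $\overline{\mathcal{P}_{j,x}(\tfrac12+it_m,\chi)}=\mathcal{P}_{j,x}(\tfrac12-it_m,\overline\chi)$, so that $\exp(2a_m\Re\mathcal{P}_{j,P_R}(\tfrac12+it_m,\chi))=\exp(a_m\mathcal{P}_{j,P_R}(\tfrac12+it_m,\chi))\exp(a_m\mathcal{P}_{j,P_R}(\tfrac12-it_m,\overline\chi))$ with the same bound $\le K_j$ valid for $\mathcal{P}_{j,P_R}(\tfrac12-it_m,\overline\chi)$. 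Since $a_m\le a_\ast$, Lemma~\ref{lem:expTaylorSeries} applies to each factor, and using $\overline{\mathcal{N}_{j,x}(\tfrac12+it_m,\chi;a_m)}=\mathcal{N}_{j,x}(\tfrac12-it_m,\overline\chi;a_m)$ I obtain
\[
\mathcal{J}\ll_{\bf{a}}\sum_{\chi\in\mathcal{T}_k}\prod_{1\le m\le k}\prod_{1\le j\le R}\bigl|\mathcal{N}_{j,P_R}(\tfrac12+it_m,\chi;a_m)\bigr|^2 ,
\]
the product of the error factors $(1+O(e^{-50a_\ast^2K_j}))^{-2}$ being $O_{\bf{a}}(1)$ uniformly in $\chi$, because $K_j=c_j^{-3/4}$ grows geometrically as $j$ decreases and $K_R\gg_{\bf{a}}1$ (as $c_R\asymp\delta$), so $\sum_{j\le R}e^{-50a_\ast^2K_j}=O_{\bf{a}}(1)$. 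As the summand is nonnegative we may enlarge the sum to all Dirichlet characters modulo $q$.

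Write $\prod_{m,j}\mathcal{N}_{j,P_R}(\tfrac12+it_m,\chi;a_m)=\sum_n\alpha(n)\chi(n)n^{-1/2}$. By Proposition~\ref{prop:Njcoeffs} and the convolution description preceding Lemma~\ref{lem:nCoeffPatrol}, $\alpha$ is supported on integers all of whose prime factors are $\le P_R$, with at most $100a_\ast^2K_j$ of them (with multiplicity) in each $(P_{j-1},P_j]$; hence $\alpha$ is supported on $[1,N]$ with $N\le\prod_{j\le R}P_j^{100ka_\ast^2K_j}=q^{100ka_\ast^2\sum_{j\le R}c_j^{1/4}}\ll q^{O_{\bf{a}}(\delta^{1/4})+o(1)}<q$, using $c_jK_j=c_j^{1/4}$, $\sum_{j\le R}c_j^{1/4}\ll\delta^{1/4}$ (as $e^R\le\delta(\log\log q)^2$) and $\delta<e^{-1000a_\ast}$. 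Orthogonality of the characters modulo $q$ then forces $m\equiv n\pmod q$, which with $1\le m,n\le N<q$ gives $m=n$, so
\[
\mathcal{J}\ll_{\bf{a}}\varphi(q)\sum_{(n,q)=1}\frac{|\alpha(n)|^2}{n}\le\varphi(q)\sum_n\frac{|\alpha(n)|^2}{n} .
\]

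It remains to prove $\sum_n|\alpha(n)|^2/n\ll_{\bf{a}}(\log q)^{a_1^2+\cdots+a_k^2}\prod_{1\le j<l\le k}|\zeta(1+i(t_j-t_l)+1/\log q)|^{2a_ja_l}$. Since the blocks $(P_{j-1},P_j]$ are pairwise disjoint, $\alpha$ factors over them and $\sum_n|\alpha(n)|^2/n=\prod_{j\le R}\sum_{p\mid n\Rightarrow p\in(P_{j-1},P_j]}|b_{j,P_R,\bf{t},\bf{a}}(n)|^2/n$. On each block $b_{j,P_R,\bf{t},\bf{a}}$ coincides with the cutoff‑free \emph{multiplicative} majorant $b'_{j,P_R,\bf{t},\bf{a}}$ whenever $\Omega(n)\le100a_\ast^2K_j$, while the contribution of $\Omega(n)>100a_\ast^2K_j$ is, by Rankin's trick and $|b_{j,P_R,\bf{t},\bf{a}}|\le b''_{j,P_R,\bf{t},\bf{a}}$, at most $e^{O_{\bf{a}}(1)}2^{-100a_\ast^2K_j}$ for $j\ge2$ and $o(1)$ for $j=1$ — here the exponent $3/4$ in $K_j=c_j^{-3/4}$ is precisely what makes $2^{-100a_\ast^2K_1}$ dominate the factor $\exp(O_{\bf{a}}(\log\log q))$ coming from $\sum_{p\le P_1}1/p$, and the geometric growth of $100a_\ast^2K_j$ in $R-j$ makes $\sum_{j\le R}2^{-100a_\ast^2K_j}$ convergent, so these errors multiply to $O_{\bf{a}}(1)$ over $j$. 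For the surviving multiplicative sums, Lemma~\ref{lem:nCoeffPatrol} gives $b'_{j,P_R,\bf{t},\bf{a}}(p)=a_{P_R}(p)\sum_m a_mp^{-it_m}$ and $|b'_{j,P_R,\bf{t},\bf{a}}(p^r)|\le b''_{j,P_R,\bf{t},\bf{a}}(p^r)\le a_\ast^rk^r/r!$ (with the sharper bound for $j=1$, $p\le\log q$), so by $\log(1+x)\le x$ and multiplying over $p$ and $j$,
\[
\sum_n\frac{|\alpha(n)|^2}{n}\ll_{\bf{a}}\exp\Biggl(\sum_{p\le P_R}\frac{a_{P_R}(p)^2\,\bigl|\sum_m a_mp^{-it_m}\bigr|^2}{p}\Biggr).
\]
Expanding $|\sum_m a_mp^{-it_m}|^2=\sum_m a_m^2+2\sum_{j<l}a_ja_l\cos((t_j-t_l)\log p)$, using $\sum_{p\le P_R}(1-a_{P_R}(p)^2)/p=O(1)$ (the standard feature of the weight $\log(x/p)/\log x$, via \eqref{mertenpartialsummation}) and then \eqref{merten} and \eqref{mertenstype}, the exponent equals $(\sum_m a_m^2)\log\log P_R+2\sum_{j<l}a_ja_l\log|\zeta(1+i(t_j-t_l)+1/\log P_R)|+O_{\bf{a}}(1)$; since $\log P_R=c_R\log q$ with $c_R\in(\delta/e,\delta]$ fixed, replacing $P_R$ by $q$ in these two places costs only $O_{\bf{a}}(1)$ more (the change in each $\log|\zeta(\cdot)|$ is $\sum_{P_R<p\le q}\cos((t_j-t_l)\log p)/p=O_\delta(1)$ by \eqref{mertenstype}). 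Exponentiating and combining with the bound for $\mathcal{J}$ completes the proof.

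The crux is the third paragraph: one must descend from the genuine coefficients $b_{j,P_R,\bf{t},\bf{a}}$ — which carry the Taylor‑truncation indicator $c_j$ and are not multiplicative — to genuinely multiplicative objects while keeping the arithmetic phases $p^{-it_m}$ intact at prime arguments, since bounding everything by the $t$‑free majorant $b''_{j,P_R,\bf{t},\bf{a}}$ would only yield the wasteful $(\log q)^{a_\ast^2}$ and destroy the $\zeta$‑factors. The calibration of the parameters ($K_j=c_j^{-3/4}$, the geometric spacing of the $P_j$, and the smallness of $\delta$) is exactly what kills the truncation errors while keeping the Dirichlet polynomial short enough for orthogonality to isolate the diagonal.
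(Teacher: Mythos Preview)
Your proof is correct and follows essentially the same route as the paper: replace each $\exp(a_m\mathcal{P}_{j,P_R})$ by $\mathcal{N}_{j,P_R}$ via Lemma~\ref{lem:expTaylorSeries}, extend the sum over $\chi$ to all characters modulo $q$, use orthogonality to reduce to the diagonal (the Dirichlet polynomial being short enough), pass from the truncated coefficients $b_{j,P_R,\bf{t},\bf{a}}$ to the multiplicative ones via Rankin's trick, and finish with the Mertens-type estimates \eqref{merten}, \eqref{mertenstype}. The only cosmetic differences are that the paper cites \cite[Proposition~3.3]{Curran} for the Rankin step and simply drops $a_{P_R}(p)^2\le 1$ before extending the product from $p\le P_R$ to $p\le q$, whereas you spell out the Rankin bookkeeping and instead show $\sum_{p\le P_R}(1-a_{P_R}(p)^2)/p=O(1)$; also note that your length bound for $j=1$ should carry an exponent $200ka_\ast^2K_1$ rather than $100ka_\ast^2K_1$ (because $c_1(n)$ allows prime \emph{squares} as factors), though this does not affect the conclusion $N<q$.
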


\begin{prop}\label{prop:I0Bound}
Assuming the truth of GRH,  we have for large $q$'s and $k=2$, 
\[
|\mathcal{S}_0| \gg_{\bf{a}}  \varphi(q) (\log q)^{a_1^2 + \cdots + a_k^2} \prod_{1\leq j < l \leq k} |\zeta(1 + i(t_j - t_l) + 1/ \log q )|^{2a_ja_l}.
\]
\end{prop}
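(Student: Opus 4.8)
The plan is to show that $\mathcal{S}_0$ equals, up to a positive multiplicative constant close to $1$ and an error of smaller order, the right-hand side of \eqref{eqn:shiftedMomentslowerbounds}; since $\mathcal{S}_0$ is \emph{linear} in the $L$-functions and the Dirichlet polynomials attached to it are very short (length $q^{o(1)}$), this is a twisted first-moment computation. First I would linearise on $\mathcal{T}_k$: Lemma~\ref{lem:expTaylorSeries} (with $\beta=a_m-1$ and $\beta=a_m$, both of modulus $\le a_\ast$) replaces each $\exp\bigl((a_m-1)\mathcal{P}_{j,P_R}(\tfrac12+it_m,\chi)+a_m\mathcal{P}_{j,P_R}(\tfrac12-it_m,\overline\chi)\bigr)$ by $\mathcal{N}_{j,P_R}(\tfrac12+it_m,\chi;a_m-1)\mathcal{N}_{j,P_R}(\tfrac12-it_m,\overline\chi;a_m)$ up to a factor $1+O(e^{-50a_\ast^2K_j})$, and since $\sum_j e^{-50a_\ast^2 K_j}\ll e^{-50a_\ast^2\delta^{-3/4}}$ this gives
\[
\mathcal{S}_0=(1+o(1))\sum_{\chi\in\mathcal{T}_k}\Bigl(\prod_{1\le m\le k}L(\tfrac12+it_m,\chi)\Bigr)\mathcal{A}(\chi),\qquad \mathcal{A}(\chi):=\prod_{m,j}\mathcal{N}_{j,P_R}(\tfrac12+it_m,\chi;a_m-1)\mathcal{N}_{j,P_R}(\tfrac12-it_m,\overline\chi;a_m),
\]
where, by the discussion before Lemma~\ref{lem:nCoeffPatrol}, $\mathcal{A}(\chi)=\sum_{\ell_1,\ell_2}\beta(\ell_1,\ell_2)\chi(\ell_1)\overline\chi(\ell_2)(\ell_1\ell_2)^{-1/2}$ with $\beta$ supported on $\ell_1\ell_2\le q^\kappa$, $\kappa\ll a_\ast^2\delta^{1/4}$. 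I would then replace $\sum_{\chi\in\mathcal{T}_k}$ by $\sum_{\chi\in X_q^*}$: the discrepancy is controlled by Hölder's inequality, \eqref{eqn:shiftedMoments} (or \eqref{logLboundnonquad}) for the $L$-factors, and the bound $\#(X_q^*\setminus\mathcal{T}_k)\ll\varphi(q)q^{-100}$, itself obtained from large even moments over $X_q^*$ of the short polynomials $\mathcal{P}_{j,P_R}(\tfrac12+it_m,\chi)$ and Chebyshev's inequality; this makes the discrepancy $o(\text{main term})$.

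Next I would open $\prod_{m\le k}L(\tfrac12+it_m,\chi)$ by the approximate functional equation of Lemma~\ref{lem:AFEpure}, taking $X=\prod_m(1+|t_m|)^{1/2}$. Estimating the $\Gamma$-ratios in \eqref{Wdef} by Stirling's formula, the principal sum $\sum_n\tau_{\mathbf t}(n)\chi(n)n^{-1/2}W_{\mathfrak a,\mathbf t}(nX/q^{k/2})$ has effective length $\asymp q^{k/2}$, i.e.\ $\asymp q$ for $k=2$, and $W_{\mathfrak a,\mathbf t}(nX/q^{k/2})=C(\mathbf t)+O(q^{-1/4})$ for $n\le q^\kappa$ with $C(\mathbf t)$ a nonzero constant (cf.\ \eqref{Westimation}). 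Inserting the Dirichlet-series expansion of $\mathcal{A}(\chi)$ and invoking orthogonality of primitive characters modulo $q$, the sum $\sum_{\chi\in X_q^*}(\prod_m L)\mathcal{A}(\chi)$ splits into (a) the diagonal $n\ell_1=\ell_2$ of the principal sum, (b) its off-diagonal $n\ell_1\equiv\ell_2\ (\mathrm{mod}\ q)$ with $n\ell_1\neq\ell_2$, and (c) the contribution of the dual sum of \eqref{Approxfcneqn}, carrying the factor $(i^{\mathfrak a}q^{1/2}/\tau(\chi))^{k}$.

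For (a), since $\ell_1,\ell_2\le q^\kappa$ and $n\ell_1=\ell_2$ force $n$ to be $P_R$-smooth and $\le q^\kappa$, this term equals $C(\mathbf t)\varphi^*(q)\sum_{n\ell_1=\ell_2}\tau_{\mathbf t}(n)\beta(\ell_1,\ell_2)(n\ell_1\ell_2)^{-1/2}+O(\varphi^*(q)q^{-1/4+\kappa})$. Identifying $\beta$ on prime powers through Proposition~\ref{prop:Njcoeffs} and Lemma~\ref{lem:nCoeffPatrol}, and completing the diagonal sum to a multiplicative function (the tail being negligible), its Euler factors at $p\le P_R$ multiply out to $\exp\bigl(\sum_m a_m^2\sum_{p\le P_R}p^{-1}+\sum_{m\neq l}a_m a_l\sum_{p\le P_R}p^{-1-i(t_m-t_l)}+O(1)\bigr)$, the imaginary parts of the double sum cancelling in the pairs $(m,l)\leftrightarrow(l,m)$; by \eqref{merten}, \eqref{mertenstype} (and \eqref{mertenpartialsummation} to trade $1/\log P_R$ for $1/\log q$) this is $\gg(\log q)^{a_1^2+\cdots+a_k^2}\prod_{j<l}|\zeta(1+i(t_j-t_l)+1/\log q)|^{2a_ja_l}$. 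So (a) already has the required order of magnitude, and the proof is finished once (b) and (c) are shown to be $o(\text{main term})$.

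Bounding (b): the choice of $X$ keeps $n\ell_1\le q^{1+\kappa}$, so $n\ell_1=\ell_2+\nu q$ with $1\le\nu\le q^{\kappa+o(1)}$ (beyond which the decay of $W_{\mathfrak a,\mathbf t}$ takes over); bounding $\tau_{\mathbf t}(n)$ by the divisor function, using Lemma~\ref{lem:nCoeffPatrol} for $\beta$, and summing trivially shows (b) is $\ll\varphi^*(q)q^{-1/2+2\kappa+o(1)}$, which is $o(\text{main term})$ once $\delta$, hence $\kappa$, is small. The term (c) is the genuine obstacle, and \emph{this is where $k=2$ is essential}: writing $\tau(\chi)^{-k}=\tau(\overline\chi)^{k}q^{-k}$ for primitive $\chi$, expanding $\tau(\overline\chi)^{k}$ additively and reapplying orthogonality turns (c) into a weighted sum over $n$ (and the short variables $\ell_1,\ell_2$) of complete exponential sums modulo $q$ which, \emph{for $k=2$}, are Kloosterman sums $S(1,\ell_1\overline{n\ell_2};q)$; Weil's bound, combined with the oscillation of the divisor-type coefficients $\tau_{\pm\mathbf t}$ along arithmetic progressions and the smoothing $W_{\mathfrak a,-\mathbf t}$, then shows (c) is $\ll\varphi^*(q)q^{-\delta'}$ for some $\delta'>0$. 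Making this last estimate work is the hard part of the argument; for $k\ge3$ one meets instead hyper-Kloosterman sums whose size, together with the length $q^{k/2}\ge q^{3/2}$ of the degree-$k$ approximate functional equation, no longer permits the dual contribution to be absorbed, which is exactly why Theorem~\ref{thm:main} is restricted to $k=2$. Assembling the above and choosing $\delta$ small in terms of $\mathbf a$ yields Proposition~\ref{prop:I0Bound}; together with Proposition~\ref{prop:JBound}, a matching upper bound for each $\mathcal{S}_m$, and \eqref{S0bound}, this proves Theorem~\ref{thm:main}.
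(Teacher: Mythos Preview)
Your overall architecture --- linearise on $\mathcal{T}_k$, extend the sum to all of $X_q^*$, and then evaluate the resulting twisted first moment via Lemma~\ref{lem:AFEpure}, with the dual sum handled by Weil's bound for Kloosterman sums when $k=2$ --- is exactly what the paper does, and your treatment of the diagonal, the off-diagonal congruences, and the dual contribution (your (a), (b), (c)) tracks the paper's computation of $\mathcal{J}_1$ in Section~\ref{sect:prop5.2} closely. The genuine gap is in your passage from $\mathcal{T}_k$ to $X_q^*$: the claim $\#(X_q^*\setminus\mathcal{T}_k)\ll\varphi(q)q^{-100}$ is false. For $j$ close to $R$ one has $K_j\asymp\delta^{-3/4}$, and since $\mathcal{P}_{j,P_R}$ is supported on primes up to $P_j=q^{c_j}$ with $c_j\asymp\delta$, the largest even moment one can afford in \eqref{powerest} is the $2\ell$-th with $\ell\ll 1/\delta$; Chebyshev then gives only $\#\{\chi:|\mathcal{P}_{j,P_R}|>K_j\}\ll\varphi(q)\exp\bigl(-c\,\delta^{-1}\log(1/\delta)\bigr)$, a saving that depends solely on $\delta$ and never on $q$ (and for $j=1$ the best one can get is an arbitrary power of $\log q$; cf.\ Proposition~\ref{prop:badSetInt}). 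Even a small exceptional set would not finish the argument by H\"older alone, because on $X_q^*\setminus\mathcal{T}_k$ the truncated exponentials $\mathcal{N}_{j,P_R}$ are no longer controlled and can be as large as a small power of $q$.

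What the paper does to bound this discrepancy --- its $\mathcal{J}_2$ --- is substantially more delicate than a measure estimate. It stratifies $X_q^*\setminus\mathcal{T}_k=\bigsqcup_{n,\mathcal{A}}\mathcal{B}_{\mathcal{A},n}$ according to the \emph{first} index $n$ at which some $\mathcal{P}_{n,P_s}$ exceeds $K_n$; on each stratum the shorter polynomials $\mathcal{P}_{j,P_{n-1}}$ for $j<n$ are still small, so one may invoke Lemma~\ref{lem:logZetaUpperBound} with $x=P_{n-1}$ to replace $\prod_m|L(\tfrac12+it_m,\chi)|$ by a product of $|\mathcal{N}_{j,P_{n-1}}(\tfrac12+it_m,\chi;\tfrac12)|$, insert the Chebyshev weight $|\mathcal{P}_{n,P_s}/K_n|^{2\lceil 1/20c_n\rceil}\ge 1$, and then apply orthogonality (Propositions~\ref{prop:RHMVDP}--\ref{prop:RHMVDP2}). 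The outcome is $\sum_{\mathcal{B}_{\mathcal{A},n}}\ll|\mathcal{J}_1|\exp\bigl(A/c_{n-1}-\tfrac{1}{40c_n}\log(1/c_n)+O(R-n)\bigr)$; summing over $n$ and choosing $\delta$ small gives $|\mathcal{J}_2|\le\tfrac12|\mathcal{J}_1|$. This recursive upper-bound argument on the exceptional set is the step your proposal is missing.
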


\begin{prop}\label{prop:IkBound}
Assuming the truth of GRH,  we have for large $q$'s and $1\leq m \leq k$,
\[
\mathcal{S}_m \ll_{\bf{a}}  \varphi(q) (\log q)^{a_1^2 + \cdots + a_k^2} \prod_{1\leq j < l \leq k} |\zeta(1 + i(t_j - t_l) + 1/ \log q )|^{2a_ja_l}.
\]
\end{prop}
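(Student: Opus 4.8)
The plan is to show that $\mathcal{S}_m \ll_{\bf a} \mathcal{J}$ and then to quote Proposition~\ref{prop:JBound}. As recorded at the beginning of this section, under GRH we may use \eqref{logLboundnonquad} for every $\chi\in X_q^*$, so the only ingredient needed beyond Proposition~\ref{prop:JBound} is a pointwise upper bound for the single factor $|L(\tfrac12+it_m,\chi)|^{2k}$ appearing in $\mathcal{S}_m$, expressed in terms of the Dirichlet polynomials $\mathcal{P}_{j,P_R}(\tfrac12+it_m,\chi)$.

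First I would apply Lemma~\ref{lem:logZetaUpperBound} with its parameter ``$k$'' taken to be $1$, with $a_1 = 2k$, $t_1 = t_m$, and with $x = P_R$; the hypothesis $|t_m|\le q^A$ is exactly what the lemma requires. Here $\log P_R = c_R\log q$, and since consecutive $c_j$ are in ratio $e$ while $c_R$ is the largest among them not exceeding $\delta$, one has $\delta/e < c_R \le \delta$; consequently the term $a(A+1)\log q/\log x$ in \eqref{logLboundnonquad} is $\le 2k(A+1)e/\delta = O_{\bf a}(1)$, and also $\min(\log q,P_R^{1/2}) = \log q$ once $q$ is large. With $h(p) = k\,p^{-it_m}$ and $h(p^2) = k\,p^{-2it_m}$, the prime sum in \eqref{logLboundnonquad} equals $2k\,\Re\sum_{p\le P_R}\chi(p)p^{-it_m}p^{-1/2-1/\log P_R}\log(P_R/p)/\log P_R$, which, recalling $P_0=1$, breaks up over the intervals $(P_{j-1},P_j]$ into $2k\,\Re$ times the sum of the prime parts of $\mathcal{P}_{j,P_R}(\tfrac12+it_m,\chi)$ for $j=1,\dots,R$, while the prime-square term $\Re\sum_{p\le\log q}k\,\chi(p^2)p^{-2it_m}/p$ is precisely $2k\,\Re$ times the prime-square part of $\mathcal{P}_{1,P_R}(\tfrac12+it_m,\chi)$. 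Reassembling these pieces yields, for every $\chi\in X_q^*$,
\[
|L(\tfrac12+it_m,\chi)|^{2k} \;\ll_{\bf a}\; \prod_{1\le j\le R}\exp\!\big(2k\,\Re\,\mathcal{P}_{j,P_R}(\tfrac12+it_m,\chi)\big).
\]

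Inserting this into the definition of $\mathcal{S}_m$ and using that the factors $\exp\!\big(2(a_m-k)\Re\,\mathcal{P}_{j,P_R}(\tfrac12+it_m,\chi)\big)$ are positive, the exponents at $t_m$ combine as $2k+2(a_m-k)=2a_m$, so that
\[
\mathcal{S}_m \;\ll_{\bf a}\; \sum_{\chi\in\mathcal{T}_k}\ \prod_{1\le \ell\le k}\ \prod_{1\le j\le R}\exp\!\big(2a_\ell\,\Re\,\mathcal{P}_{j,P_R}(\tfrac12+it_\ell,\chi)\big)\;=\;\mathcal{J},
\]
and the claimed bound follows from Proposition~\ref{prop:JBound}. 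The argument is thus essentially bookkeeping once Lemma~\ref{lem:logZetaUpperBound} and Proposition~\ref{prop:JBound} are in hand, mirroring the corresponding step in \cite{Curran24}; the only point demanding care is verifying that $a(A+1)\log q/\log x$ is genuinely $O_{\bf a}(1)$ for $x=P_R$, which rests on the lower bound $c_R>\delta/e$ built into the construction of the $P_j$. In particular, this step works verbatim for any fixed positive integer $k$, so it is not the source of the restriction $k=2$ in Theorem~\ref{thm:main}.
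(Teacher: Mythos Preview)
Your proof is correct and uses the same key step as the paper: both begin by applying Lemma~\ref{lem:logZetaUpperBound} with $x=P_R$ to bound $|L(\tfrac12+it_m,\chi)|^{2k}$ by $\prod_{j\le R}\exp\!\big(2k\,\Re\,\mathcal{P}_{j,P_R}(\tfrac12+it_m,\chi)\big)$, after which the exponents at $t_m$ recombine to $2a_m$. The only difference is in packaging: you observe that the resulting sum over $\mathcal{T}_k$ is literally $\mathcal{J}$ and quote Proposition~\ref{prop:JBound}, whereas the paper passes through Lemma~\ref{lem:expTaylorSeries} to replace each $\exp$ by the corresponding $\mathcal{N}$-polynomial, extends the sum to all of $X_q$, and then redoes the Euler-product computation already carried out in Section~\ref{sect:J}. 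Your route is a legitimate and slightly more economical shortcut; nothing is lost, since Proposition~\ref{prop:JBound} already encapsulates that computation.
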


The remainder of the paper is devoted to the proofs of these propositions. 
  
\section{Proof of Proposition \ref{prop:JBound}}
\label{sect:J}
  
   Recall that
\begin{align*}
\mathcal{J} = \sum_{\chi \in \mathcal{T}_k} \prod_{1 \leq m\leq k}\prod_{1 \leq j\leq R} \exp\left(2a_m\Re \mathcal{P}_{j,P_R}(\tfrac{1}{2}+it_m, \chi) \right).
\end{align*}
 We extend the $\chi$-sum above to $\chi \in X_q$, the set of all Dirichlet characters modulo $q$.  Applying Lemma \ref{lem:expTaylorSeries} while noting that $\prod_{1 \leq j\leq R} (1 +O(e^{-50a_\ast^2 K_j}))^2 = O(1)$ gives us
\begin{align*}
\mathcal{J} \ll \sum_{\chi \in X_q} \prod_{1 \leq m\leq k} |\mathcal{N}_{P_R}(\tfrac{1}{2}+it_m, \chi ; a_m)|^2 ,
\end{align*}
  where for any real number $\beta$, we define
\[
\mathcal{N}_x(s, \chi;\beta) := \prod_{1 \leq j\leq R} \mathcal{N}_{j,x}(s, \chi;\beta).
\]

Observe that $\mathcal{N}_{j,x}(s, \chi;a)$ is a Dirichlet polynomial of length at most $P_j^{100a_\ast^2 K_j}$ when $j > 1$ and at most $\leq P_1^{200a_\ast^2 K_1}$ for $j=1$. It follows from the choice of  $P_j$, $K_j$ and $R$ that, upon choosing $\delta$ small enough, 
\[
\prod_{1 \leq m\leq k} \mathcal{N}_{x}(s, \chi; a_m)
\]
has length at most $P_1^{200ka_\ast^2 K_1} P_2^{100ka_\ast^2 K_2} \cdots P_R^{100ka_\ast^2 K_R} \leq q^{2/10^M}$, where $M$ is a large natural number.
Observe also the orthogonality relation for Dirichlet characters (see \cite[Corollary 4.5]{MVa1}) asserts that for $(n,q)=1$,
\begin{align}
\label{orthrel}
\sum_{\chi \in X_q} \chi(n)=\begin{cases}
 \varphi(q), & \text{if} \ n \equiv 1 \pmod q, \\
 0, & \text{otherwise}.
\end{cases}
\end{align}

 We then proceed as in the proof of \cite[proposition 3.3]{Curran} while applying the well-known bound
\begin{align}
\begin{split}
\label{1xbound}
 1+x \leq e^x, \quad \text{for all} \; x \in \rear, 
\end{split}
\end{align} 
  to see that
\begin{align}
\begin{split}
\label{Tupperbound}
\mathcal{J} \ll & \varphi(q) \prod_{1 \leq j \leq R} \left(\prod_{p\in (P_{j-1},P_j]} \left(1 + \frac{|b_{j, P_R, \bf{t}, \bf{a}}(p)|^2}{p} + O\left(\frac{1}{p^2}\right)\right) + O(e^{-50a_\ast^2 K_j}) \right)\\
\ll & \varphi(q) \prod_{1 \leq j \leq R} \left(\prod_{p\in (P_{j-1},P_j]} \left(1 + \frac{|\sum_{m = 1}^k a_m p^{-it_m}|^2}{p} \right) \right) 
\ll \varphi(q) \prod_{p \leq q} \left(1 + \frac{|\sum_{m = 1}^k a_m p^{-it_m}|^2}{p} \right)  \\
\ll & \varphi(q)  \exp \left (\sum_{p \leq q} \frac{|\sum_{m = 1}^k a_m p^{-it_m}|^2}{p}  \right ) \ll \varphi(q) (\log q)^{a_1^2 + \cdots + a_k^2} \prod_{1\leq j<l \leq k} |\zeta(1 + i(t_j - t_l) + 1/ \log q )|^{2a_j a_l},
\end{split}
\end{align}
where the last estimation follows from  \eqref{merten} and \eqref{mertenstype}.
 This completes the proof of the proposition.

\section{Proof of Proposition \ref{prop:I0Bound}}
\subsection{Initial Treatments}

Recall that
\begin{align*}
\mathcal{S}_0 &= \sum_{\chi \in \mathcal{T}_k} \prod_{1 \leq m\leq k} L(\tfrac{1}{2} + it_m, \chi)\prod_{1 \leq j\leq R} \exp\left((a_m-1) \mathcal{P}_{j,P_R}(\tfrac{1}{2}+it_m, \chi) + a_m  \mathcal{P}_{j,P_R}(\tfrac{1}{2}-it_m, \overline \chi) \right).
\end{align*}

  We now recast $\mathcal{S}_0$ as a telescoping sum
\begin{align}
\begin{split}
\label{S0decomp}
\mathcal{S}_0 = \mathcal{S}_0^{(R)} - \sum_{1 \leq J \leq R} \left(\mathcal{S}_0^{(J)} - \mathcal{S}_0^{(J-1)}\right),
\end{split}
\end{align}
  where we define for $0 \leq J \leq R$,
\begin{align*}
  \mathcal{S}_0^{(J)} = \sum_{\chi \in \mathcal{T}_k} \prod_{1 \leq m\leq k} & L(\tfrac{1}{2} + it_m, \chi)\prod_{1 \leq j\leq J} \mathcal{N}_{j,P_R}(\tfrac{1}{2} + it_m, \chi; \tfrac{1}{2}(a_m - 1))^2 \overline{\mathcal{N}_{j,P_R}(\tfrac{1}{2} + it_m, \chi; \tfrac{1}{2}a_m)}^2 \\
\times & \prod_{J < j\leq R} \exp\left((a_m-1) \mathcal{P}_{j,P_R}(\tfrac{1}{2}+it_m, \chi) + a_m  \mathcal{P}_{j,P_R}(\tfrac{1}{2}-it_m, \overline \chi) \right).
\end{align*}
As usual, the empty product equals $1$. \newline
  
  We first bound $\mathcal{S}_0^{(R)}$ by  decomposing it as $\mathcal{S}_0^{(R)} = \mathcal{J}_1 - \mathcal{J}_2$, where
\begin{align*}
\begin{split}
\mathcal{J}_1 =& \sum_{\chi \in X^*_q} \prod_{1 \leq m\leq k} L(\tfrac{1}{2} + it_k, \chi) \mathcal{N}_{P_R}(\tfrac{1}{2} + it_m, \chi; \tfrac{1}{2}(a_m - 1))^2 \overline{\mathcal{N}_{P_R}(\tfrac{1}{2} + it_m, \chi; \tfrac{1}{2}a_m)}^2, \quad \mbox{and} \\
\mathcal{J}_2 =& \sum_{\chi \in X^*_q \setminus \mathcal{T}_k} \prod_{1 \leq m\leq k} L(\tfrac{1}{2} + it_m, \chi) \mathcal{N}_{P_R}(\tfrac{1}{2} + it_m, \chi;
 \tfrac{1}{2}(a_m - 1))^2 \overline{\mathcal{N}_{P_R}(\tfrac{1}{2} + it_m, \chi; \tfrac{1}{2}a_m)}^2.
\end{split}
\end{align*}

The following propositions estimate the sizes of $\mathcal{J}_1$ and $\mathcal{J}_2$. 
\begin{prop}
\label{prop:J1Lower}
We have
\[
\mathcal{J}_1 \gg  \varphi(q)\prod_{p \leq P_R} \left(1 + \sum_{1\leq j , m \leq k} \frac{a_j a_m}{p^{1+ i(t_j - t_m)}}\right).
\]
\end{prop}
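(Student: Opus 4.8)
\textbf{Proof plan for Proposition \ref{prop:J1Lower}.}

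The plan is to open up the product over $m$ in $\mathcal{J}_1$ into Dirichlet series, execute the character sum over the primitive characters $\chi \in X_q^*$ by orthogonality, and then show the resulting main term matches the stated Euler product while the off-diagonal contribution is negligible. First I would use the Dirichlet series expansions from Proposition \ref{prop:Njcoeffs}: writing $\mathcal{N}_{P_R}(\tfrac12+it_m,\chi;\tfrac12(a_m-1)) = \sum_n b'_{x,{\bf t},{\bf a}}$-type coefficients times $\chi(n)n^{-1/2-it_m}$ (products of the $b_{j}$ over $1\le j\le R$), and similarly for the conjugated factor $\overline{\mathcal{N}_{P_R}(\tfrac12+it_m,\chi;\tfrac12 a_m)}$, I combine all four factors from all $k$ values of $m$ together with the approximate functional equation (Lemma \ref{lem:AFEpure}) for $\prod_m L(\tfrac12+it_m,\chi)$. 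This expresses $\mathcal{J}_1$ as a double sum over integers $n$ (from the $L$-values/AFE) and $\nu$ (from the Dirichlet polynomials), weighted by $\sum_{\chi \in X_q^*}\chi(n)\overline{\chi}(\nu)$ or similar, times the smooth weight $W_{\mathfrak a,{\bf t}}$.

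Next I would invoke orthogonality over primitive characters: $\sum_{\chi \in X_q^*}\chi(n)\overline\chi(\nu) = \sum_{d \mid q}\mu(q/d)\,{\bf 1}_{n\equiv \nu \,(d)}\,{\bf 1}_{(n\nu,d)=1}$ or the cleaner version via the full character sum minus imprimitive contributions. The diagonal $n = \nu$ produces a main term proportional to $\phis(q)$ (which is $\gg \varphi(q)$ by the hypothesis $\phis(q)\ge \eta\varphi(q)$) times $\sum_{n} (\text{coeff})/n$; since the polynomials have length $\le q^{2/10^M}$ and the AFE weight $W$ restricts $n \ll q^{k/2+\varepsilon}$ after choosing $X$ appropriately, the dual (second) term in the AFE contributes negligibly and the surviving diagonal sum factors as an Euler product over $p \le P_R$. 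The key point is that the $p$-th coefficient identities $b_{j,x,{\bf t},{\bf a}}(p) = a_x(p)\sum_m a_m p^{-it_m}$ from Lemma \ref{lem:nCoeffPatrol}, combined with $a_x(p) \to 1$ for $p$ well below $x = P_R$, reproduce exactly the factor $1 + \sum_{j,m} a_j a_m p^{-1-i(t_j-t_m)}$ at each prime (the cross term $L$-coefficient $\times$ polynomial coefficient contributes $\sum_m a_m p^{-it_m}$, the polynomial-against-polynomial contributes the conjugate $\sum_j a_j p^{it_j}$, at weight $1/p$), with higher prime powers contributing only $O(1/p^2)$; positivity (all $a_m > 0$) of the main term is what yields the lower bound rather than merely an asymptotic.

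The main obstacle I anticipate is controlling the off-diagonal terms $n \neq \nu$ with $n \equiv \nu \pmod d$ for $d \mid q$, together with the dual AFE term: one must show these are $o(\varphi(q))$, or more precisely smaller than the main term, which requires bounding sums of the (nonnegative majorant) coefficients $b''$ against the smooth weights and exploiting that the total Dirichlet polynomial length $q^{2/10^M}$ is a tiny power of $q$ so that congruence classes mod $d$ (for $d$ a nontrivial divisor of $q$, hence $d \ge$ smallest prime factor of $q$) force either $n = \nu$ or $|n - \nu| \ge d$, making the count of genuine off-diagonal solutions small; the contribution of such terms is then dominated using Lemma \ref{lem:nCoeffPatrol} and Rankin's trick much as in \cite[Proposition 3.3]{Curran} and \cite{Curran24}. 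A secondary technical point is handling the restriction to $\mathcal{T}_k$ implicitly: here $\mathcal{J}_1$ is already the sum over \emph{all} of $X_q^*$, so no restriction is needed for this proposition, and the error from $\mathcal{J}_2$ is dealt with separately in the next step of the paper. Finally, one checks the main term is genuinely of size $\gg \varphi(q)\prod_{p\le P_R}(1+\sum_{j,m} a_ja_m p^{-1-i(t_j-t_m)})$ by noting each local factor is $1 + O(1/p)$ and bounded below, so the product converges in the relevant sense and the constant is positive.
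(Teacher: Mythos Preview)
Your overall strategy matches the paper's: expand via the approximate functional equation, apply primitive-character orthogonality (the M\"obius-inverted formula \eqref{sumchistar}), isolate the diagonal $nh=m$, and recover the Euler product using the identity $q_j(p)+\tau_{\bf t}(p)=r_j(p)$. However, two genuine gaps in your plan are exactly the places where the paper has to work hardest.

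First, you assert that ``the dual (second) term in the AFE contributes negligibly'' and that off-diagonal congruence solutions are few because the Dirichlet polynomial has tiny length. But the AFE variable $n$ is \emph{not} short: it ranges up to $q^{k/2}/X$, so the diagonal-plus-small-length heuristic does not directly apply. The paper handles the dual term $\mathcal J_{1,2}$ by expressing $\tau(\overline\chi)^k$ as a sum over multiple Kloosterman sums $S_k(\overline v,q)$ and invoking the Weil bound \eqref{Weilbound}; this is a nontrivial arithmetic input you have not mentioned. For the off-diagonal in $\mathcal J_{1,1}$ the paper writes $nh=\mp m+cl$ and bounds the $l\neq 0$ contribution by elementary counting, but both error terms come out as powers of $q$ depending on the free parameter $X$, and one must balance $X$ between them.

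Second, and more seriously, that balancing only succeeds for $k=2$. With the paper's choice $X=q^{-1/2-2/10^M}$ one gets $\mathcal J_{1,2}\ll q^{1/4+k/4+\varepsilon}$ and off-diagonal $\ll q^{1/4+k/4+\varepsilon}$, which beats $\varphi(q)\asymp q^{1-\varepsilon}$ only when $k=2$. Your plan treats $k$ as generic and gives no indication that a barrier arises here; in fact this is precisely why Theorem~\ref{thm:main} and Proposition~\ref{prop:I0Bound} are stated only for $k=2$. Without addressing this you cannot close the argument.
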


\begin{prop}
\label{prop:J2J1comp}
Assuming the truth of GRH, we have, for $\delta$ being sufficiently small in terms of $\bf{a}$, 
\[
|\mathcal{J}_2| \leq \frac {|\mathcal{J}_1|}{2}.
\]
\end{prop}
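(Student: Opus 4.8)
The plan is to run the Radziwi{\l\l}--Soundararajan conditioning argument in the form used in \cite{Curran24}, showing that $\mathcal{J}_2$ is a negligible fraction of $\mathcal{J}_1$. Since every $\chi\in X_q^*\setminus\mathcal{T}_k$ violates the defining inequality of $\mathcal{T}_k$ at some pair $(j,m)$, for any choice of positive integers $\ell_1,\dots,\ell_R$ one has the pointwise estimate
\[
\mathbf 1[\chi\notin\mathcal{T}_k]\ \le\ \sum_{j=1}^{R}\sum_{m=1}^{k}\left(\frac{\lab\mathcal{P}_{j,P_R}(\tfrac12+it_m,\chi)\rab}{K_j}\right)^{2\ell_j}.
\]
First I would use this to replace the sum defining $\mathcal{J}_2$ by an unrestricted sum over $X_q$ carrying $kR$ penalty factors, and treat the contribution of each pair $(j,m)$ separately. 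The integers $\ell_j$ will be chosen of size $\asymp c_j^{-1}$ with a small implied constant depending on $\bf a$ --- small enough that all Dirichlet polynomials occurring below keep length $<q$; the definitions of $P_l$, $K_l$, $R$ and $\delta$ make such a choice possible.

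For a fixed $(j,m)$ the modulus of the weighted summand is bounded by a product of short Dirichlet polynomials, one per block $(P_{l-1},P_l]$, so that summation over $\chi\in X_q$ via the orthogonality relation \eqref{orthrel} is legitimate. The $L$-factors are controlled under GRH as in Section~\ref{sec2.4}: taking $x=P_R$ in Lemma~\ref{lem:logZetaUpperBound}, so that the error term $a(A+1)\log q/\log P_R=a(A+1)c_R^{-1}$ is $O_{\bf a}(1)$, one has $\lab L(\tfrac12+it_{m'},\chi)\rab\ll_{\bf a}\prod_{l=1}^{R}\exp\bigl(\Re\,\mathcal{P}_{l,P_R}(\tfrac12+it_{m'},\chi)\bigr)$ for each $m'$; on the blocks $l$ where the relevant $\mathcal{P}_{l,P_R}$ has size $\le K_l$ one replaces the exponential by its truncated Taylor polynomial via Lemma~\ref{lem:expTaylorSeries}, while the remaining blocks --- the distinguished one together with any block on which some $\mathcal{P}_{l,P_R}$ is large --- produce short moments of Dirichlet polynomials over $(P_{l-1},P_l]$ that are dominated by penalty factors of the above type (iterating the union bound if need be). After expanding everything and invoking \eqref{orthrel}, Lemma~\ref{lem:nCoeffPatrol} and Lemma~\ref{RS}, the blocks $l\neq j$ reconstitute, up to a multiplicative constant depending only on $\bf a$, the Euler-type product $\prod_{p\le P_R}\bigl(1+\sum_{1\le s,t\le k}a_sa_tp^{-1-i(t_s-t_t)}\bigr)$ that bounds $\mathcal{J}_1$ from below in Proposition~\ref{prop:J1Lower}.

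The saving comes from the distinguished block $j$: there $\sum_{p\in(P_{j-1},P_j]}p^{-1}=O(1)$ for $j\ge 2$ (and $\asymp\log\log q$ for $j=1$, offset by the larger $K_1$), so a moment computation for $\mathcal{P}_{j,P_R}$ combined with the denominator $K_j^{2\ell_j}$ shows the block-$j$ contribution is
\[
\ll_{\bf a}\Bigl(\frac{C\,\ell_j}{K_j^{2}}\Bigr)^{\ell_j}
\]
for a constant $C=C({\bf a})$; since $K_j=c_j^{-3/4}$ and $\ell_j\asymp c_j^{-1}$ this is $\ll_{\bf a}\exp(-c\,c_j^{-1})$ once $\delta$ is sufficiently small in terms of $\bf a$. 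Hence each of the $kR$ terms is $\ll_{\bf a}\exp(-c\,c_j^{-1})$ times the lower bound for $\mathcal{J}_1$ from Proposition~\ref{prop:J1Lower}; because $c_j=e^{j}(\log\log q)^{-2}$ increases with $j$, the sum $\sum_{1\le j\le R}\exp(-c\,c_j^{-1})$ is dominated by its last term, with $c_R\asymp\delta$. Summing over $1\le m\le k$ and $1\le j\le R$ thus gives $\lab\mathcal{J}_2\rab\ll_{\bf a}k\,e^{-c/\delta}\,\lab\mathcal{J}_1\rab$, and since neither the implied constant nor $e^{-c/\delta}$ depends on $q$, choosing $\delta$ small enough in terms of $\bf a$ forces $\lab\mathcal{J}_2\rab\le\lab\mathcal{J}_1\rab/2$.

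I expect the main difficulty to lie in the second paragraph: controlling the $L$-factors on the event that some $\mathcal{P}_{j,P_R}$ is large without losing any power of $\log q$. A crude separation of the $L$'s --- say by Cauchy--Schwarz and the moment bound \eqref{eqn:shiftedMoments} --- would produce a spurious factor $(\log q)^{O_{\bf a}(1)}$ that the fixed saving $e^{-c/\delta}$ cannot absorb, so the $\mathcal{N}$-approximations have to stay attached to the $L$'s in order that the ``good'' blocks genuinely rebuild the $\mathcal{J}_1$ main term. One must also check that inserting a penalty factor of exponent $2\ell_j$ does not push any Dirichlet polynomial length past $q$; this is exactly why $\ell_j$ can only be taken of size $\asymp c_j^{-1}$, and hence why the per-block saving is $e^{-c\,c_j^{-1}}$ rather than the weaker $e^{-cK_j}$.
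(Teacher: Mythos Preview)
Your overall plan matches the paper's, and you correctly locate the crux in your final paragraph. But the specific mechanism you propose---take $x=P_R$ in Lemma~\ref{lem:logZetaUpperBound} and then ``iterate the union bound'' on any further bad blocks---does not close the gap. After the first union bound the $\chi$-sum already runs over all of $X_q$, so there is no residual indicator to which a second union bound could be applied. Meanwhile the bound $|L(\tfrac12+it_{m'},\chi)|\ll\exp\bigl(\sum_{l\le R}\Re\,\mathcal{P}_{l,P_R}(\tfrac12+it_{m'},\chi)\bigr)$ carries a contribution from \emph{every} block $l$; on blocks $l\neq j$ where $|\mathcal{P}_{l,P_R}|>K_l$ you can neither pass to $\mathcal{N}_l$ via Lemma~\ref{lem:expTaylorSeries} nor absorb $\exp(\Re\,\mathcal{P}_l)$ into a penalty (you have planted only the one at block $j$), and $\exp(\Re\,\mathcal{P}_l)$ is not a Dirichlet polynomial, so orthogonality cannot be invoked directly.

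The paper's remedy is to let $x$ vary with the character. It partitions $X_q^*\setminus\mathcal{T}_k$ into sets $\mathcal{B}_{\mathcal{A},n}$ indexed by the \emph{first} index $n$ at which some $\mathcal{P}_{n,P_s}$ exceeds $K_n$, and on each piece with $n>1$ applies Lemma~\ref{lem:logZetaUpperBound} with $x=P_{n-1}$ rather than $P_R$. The Dirichlet polynomial for $\log|L|$ then involves only primes $\le P_{n-1}$, all lying in blocks that are good by construction, so the Taylor replacement is legitimate everywhere it is needed; the price is the extra factor $e^{(A+1)/c_{n-1}}$, which is beaten by the block-$n$ penalty $e^{-\log(1/c_n)/(40c_n)}$, while the blocks $l>n$ carry only the $\mathcal{N}$-factors already present in $\mathcal{J}_2$ and contribute a harmless $e^{O(R-n)}$. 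The case $n=1$ is handled separately by H\"older and crude moment bounds (Proposition~\ref{prop:badSetInt}), since $c_0=0$ would make the above cost infinite. Summing over $n$ and $\mathcal{A}$ gives $|\mathcal{J}_2|\le|\mathcal{J}_1|\exp\bigl(O(1)+\tfrac{1}{40}\log\delta\bigr)$, and the proposition follows once $\delta$ is small enough in terms of $\bf a$.
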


 The above two propositions readily imply the following estimation on $\mathcal{S}_0^{(R)}$ .

\begin{prop}\label{prop:I0Llower}We have
\[ 
|\mathcal{S}_{0}^{(R)}| \gg  \varphi(q) \prod_{p\leq P_R} \left(1 + \sum_{1\leq j,m \leq k} \frac{a_j a_m}{p^{1 + i(t_j-t_m)}} \right).
\]
\end{prop}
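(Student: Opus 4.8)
The statement to be proven is \ref{prop:I0Llower}, namely the lower bound $|\mathcal{S}_0^{(R)}| \gg \varphi(q)\prod_{p\leq P_R}(1 + \sum_{1\leq j,m\leq k} a_ja_m p^{-1-i(t_j-t_m)})$, and as announced in the text this is meant to follow ``readily'' from Propositions \ref{prop:J1Lower} and \ref{prop:J2J1comp}. So the proof is essentially a one-line deduction from the decomposition $\mathcal{S}_0^{(R)} = \mathcal{J}_1 - \mathcal{J}_2$. The plan is: first recall that decomposition; then invoke Proposition \ref{prop:J2J1comp} to get $|\mathcal{J}_2| \leq |\mathcal{J}_1|/2$; then apply the triangle inequality in the form $|\mathcal{S}_0^{(R)}| = |\mathcal{J}_1 - \mathcal{J}_2| \geq |\mathcal{J}_1| - |\mathcal{J}_2| \geq |\mathcal{J}_1|/2$; and finally feed in the lower bound $\mathcal{J}_1 \gg \varphi(q)\prod_{p\leq P_R}(1 + \sum_{1\leq j,m\leq k} a_ja_m p^{-1-i(t_j-t_m)})$ from Proposition \ref{prop:J1Lower} to conclude. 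The only mild subtlety is that Proposition \ref{prop:J1Lower} is phrased as a bound on $\mathcal{J}_1$ (a quantity that is, a priori, complex), so I should note that the product $\prod_{p\leq P_R}(1 + \sum_{1\leq j,m\leq k} a_ja_m p^{-1-i(t_j-t_m)})$ is in fact real and positive---indeed it equals $\prod_{p\leq P_R}(1 + p^{-1}|\sum_{m=1}^k a_m p^{-it_m}|^2)$---so that ``$\mathcal{J}_1 \gg (\text{that product})$'' legitimately forces $|\mathcal{J}_1|$ to be at least a constant times that product, and the chain of inequalities above goes through.

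\textbf{Key steps in order.} (i) State $\mathcal{S}_0^{(R)} = \mathcal{J}_1 - \mathcal{J}_2$, which is just the definition given immediately above Proposition \ref{prop:J1Lower}. (ii) Observe that $\sum_{1\leq j,m\leq k} a_ja_m p^{-1-i(t_j-t_m)} = p^{-1}|\sum_{m=1}^k a_m p^{-it_m}|^2 \geq 0$, so each factor $1 + \sum_{j,m} a_ja_m p^{-1-i(t_j-t_m)}$ is a real number $\geq 1$, whence the product $P := \prod_{p\leq P_R}(1 + \sum_{j,m} a_ja_m p^{-1-i(t_j-t_m)})$ is real and $\geq 1$. (iii) By Proposition \ref{prop:J1Lower}, $|\mathcal{J}_1| \geq \mathcal{J}_1 \gg P$ (more precisely: Proposition \ref{prop:J1Lower} asserts $\mathcal{J}_1 \gg \varphi(q) P$, and since $\varphi(q)P > 0$ this is the same as $|\mathcal{J}_1| \gg \varphi(q)P$). (iv) By Proposition \ref{prop:J2J1comp}, $|\mathcal{J}_2| \leq |\mathcal{J}_1|/2$. (v) Combine: $|\mathcal{S}_0^{(R)}| = |\mathcal{J}_1 - \mathcal{J}_2| \geq |\mathcal{J}_1| - |\mathcal{J}_2| \geq \tfrac12 |\mathcal{J}_1| \gg \varphi(q)P$, which is the claim.

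\textbf{Main obstacle.} There is essentially no obstacle here---the genuine content lives in Propositions \ref{prop:J1Lower} and \ref{prop:J2J1comp}, whose proofs occupy later sections. The only thing to be careful about in writing this up is the bookkeeping around whether the various quantities are real or complex: $\mathcal{J}_1$ and $\mathcal{J}_2$ are complex sums, so one must pass to absolute values correctly and use that the target product $P$ is positive real. Once that is noted, the argument is the three-line triangle-inequality deduction sketched above, and the proof of Proposition \ref{prop:I0Llower} is complete.

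\begin{proof}[Proof of Proposition \ref{prop:I0Llower}]
Recall the decomposition $\mathcal{S}_0^{(R)} = \mathcal{J}_1 - \mathcal{J}_2$ introduced above. Writing
\[
P := \prod_{p\leq P_R} \left(1 + \sum_{1\leq j,m\leq k} \frac{a_ja_m}{p^{1+i(t_j-t_m)}}\right),
\]
we note that for each prime $p$,
\[
\sum_{1\leq j,m\leq k} \frac{a_ja_m}{p^{1+i(t_j-t_m)}} = \frac{1}{p}\left|\sum_{m=1}^k a_m p^{-it_m}\right|^2 \geq 0,
\]
so every factor in $P$ is a real number which is $\geq 1$; in particular $P$ is real with $P \geq 1$. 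By Proposition \ref{prop:J1Lower} we have $\mathcal{J}_1 \gg \varphi(q) P$, and since $\varphi(q) P > 0$ this yields $|\mathcal{J}_1| \gg \varphi(q) P$. By Proposition \ref{prop:J2J1comp}, which applies under GRH once $\delta$ is chosen sufficiently small in terms of $\bf{a}$, we have $|\mathcal{J}_2| \leq \tfrac12 |\mathcal{J}_1|$. Combining these with the triangle inequality gives
\[
|\mathcal{S}_0^{(R)}| = |\mathcal{J}_1 - \mathcal{J}_2| \geq |\mathcal{J}_1| - |\mathcal{J}_2| \geq \tfrac12 |\mathcal{J}_1| \gg \varphi(q) P,
\]
which is precisely the asserted bound. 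This completes the proof.
\end{proof}
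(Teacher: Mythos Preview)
Your proposal is correct and is exactly the intended argument: the paper does not even write out a proof, stating only that Propositions \ref{prop:J1Lower} and \ref{prop:J2J1comp} ``readily imply'' Proposition \ref{prop:I0Llower}, and your triangle-inequality deduction from $\mathcal{S}_0^{(R)} = \mathcal{J}_1 - \mathcal{J}_2$ is precisely what is meant. Your explicit remark that the target product equals $\prod_{p\leq P_R}(1 + p^{-1}|\sum_m a_m p^{-it_m}|^2)$ and is therefore real and positive is a useful clarification that makes the passage from $\mathcal{J}_1 \gg \varphi(q)P$ to $|\mathcal{J}_1| \gg \varphi(q)P$ rigorous.
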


  We now turn our attention to $\big|\mathcal{S}_0^{(J)} - \mathcal{S}_0^{(J-1)} \big|$. As $|\mathcal{P}_{j,P_R}(\tfrac{1}{2}+it_m,  \chi)| \leq K_j$ for all $1 \leq j \leq R, 1 \leq m \leq k$ by the definition of $\mathcal{T}_k$,  we deduce readily from Lemma \ref{lem:expTaylorSeries} the following bounds for $\big|\mathcal{S}_0^{(J)} - \mathcal{S}_0^{(J-1)} \big|$.
\begin{lemma}\label{lem:incrementBound}
We have for $1 \leq J \leq R$,
\begin{align}
\label{Sdiffest}
&\big|\mathcal{S}_0^{(J)} - \mathcal{S}_0^{(J-1)} \big|
\ll e^{-50a_\ast^2 K_J} \sum_{\chi \in \mathcal{T}_k} \prod_{1 \leq m\leq k} |L(\tfrac{1}{2} + it_m, \chi)| \cdot  |\mathcal{N}_{P_R}(\tfrac{1}{2} + it_m, \chi; \tfrac{1}{2}(a_m - 1)) \mathcal{N}_{P_R}(\tfrac{1}{2} + it_m, \chi; \tfrac{1}{2}a_m)|^2 .
\end{align}
\end{lemma}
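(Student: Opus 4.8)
The plan is to isolate the single block of index $j$ in which $\mathcal{S}_0^{(J)}$ and $\mathcal{S}_0^{(J-1)}$ disagree and then invoke Lemma~\ref{lem:expTaylorSeries}. Comparing the two definitions, the $L$-values and every factor indexed by $j\neq J$ are identical in both sums. Writing $E_{j,m}(\chi):=\exp\bigl((a_m-1)\mathcal{P}_{j,P_R}(\tfrac12+it_m,\chi)+a_m\mathcal{P}_{j,P_R}(\tfrac12-it_m,\overline\chi)\bigr)$ and $N_{j,m}(\chi):=\mathcal{N}_{j,P_R}(\tfrac12+it_m,\chi;\tfrac12(a_m-1))^2\,\overline{\mathcal{N}_{j,P_R}(\tfrac12+it_m,\chi;\tfrac12 a_m)}^{\,2}$, and collecting the common part into
\[
F(\chi):=\prod_{1\le m\le k}\Bigl( L(\tfrac12+it_m,\chi)\prod_{1\le j\le J-1}N_{j,m}(\chi)\prod_{J<j\le R}E_{j,m}(\chi)\Bigr),
\]
one obtains
\[
\mathcal{S}_0^{(J)}-\mathcal{S}_0^{(J-1)}=\sum_{\chi\in\mathcal{T}_k}F(\chi)\Bigl(\prod_{1\le m\le k}N_{J,m}(\chi)-\prod_{1\le m\le k}E_{J,m}(\chi)\Bigr).
\]

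Next I would bound the bracketed difference. For $\chi\in\mathcal{T}_k$ we have $|\mathcal{P}_{J,P_R}(\tfrac12+it_m,\chi)|\le K_J$, hence also $|\mathcal{P}_{J,P_R}(\tfrac12-it_m,\overline\chi)|\le K_J$, while $\tfrac12|a_m-1|$ and $\tfrac12 a_m$ are at most $a_\ast$ by \eqref{aastdef}; so Lemma~\ref{lem:expTaylorSeries} applies and gives, using $\overline{\mathcal{N}_{j,P_R}(\tfrac12+it_m,\chi;\beta)}=\mathcal{N}_{j,P_R}(\tfrac12-it_m,\overline\chi;\beta)$ for real $\beta$, the relation $N_{J,m}(\chi)=\bigl(1+O(e^{-50a_\ast^2 K_J})\bigr)E_{J,m}(\chi)$ for every $m$. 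Taking the product over the $k$ indices $m$ (with $k$ fixed) and using $1+x\le e^x$, the error factors collapse into a single $1+O(e^{-50a_\ast^2 K_J})$, whence $\prod_m N_{J,m}(\chi)-\prod_m E_{J,m}(\chi)=O\bigl(e^{-50a_\ast^2 K_J}\bigr)\prod_m|E_{J,m}(\chi)|$. Inserting this gives
\[
\bigl|\mathcal{S}_0^{(J)}-\mathcal{S}_0^{(J-1)}\bigr|\ll e^{-50a_\ast^2 K_J}\sum_{\chi\in\mathcal{T}_k}|F(\chi)|\prod_{1\le m\le k}|E_{J,m}(\chi)|.
\]

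It remains to recast $|F(\chi)|\prod_m|E_{J,m}(\chi)|$ into the shape appearing on the right of \eqref{Sdiffest}. For $1\le j\le J-1$ the factor $|N_{j,m}(\chi)|$ equals $|\mathcal{N}_{j,P_R}(\tfrac12+it_m,\chi;\tfrac12(a_m-1))\mathcal{N}_{j,P_R}(\tfrac12+it_m,\chi;\tfrac12 a_m)|^2$ outright, since $|\overline z|=|z|$; for $J\le j\le R$ one more application of Lemma~\ref{lem:expTaylorSeries} yields $|E_{j,m}(\chi)|=\bigl(1+O(e^{-50a_\ast^2 K_j})\bigr)|\mathcal{N}_{j,P_R}(\tfrac12+it_m,\chi;\tfrac12(a_m-1))\mathcal{N}_{j,P_R}(\tfrac12+it_m,\chi;\tfrac12 a_m)|^2$. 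Since $\mathcal{N}_{P_R}(s,\chi;\beta)=\prod_{1\le j\le R}\mathcal{N}_{j,P_R}(s,\chi;\beta)$, multiplying these over all $j$ and $m$ reconstitutes the factors $|L(\tfrac12+it_m,\chi)|\cdot|\mathcal{N}_{P_R}(\tfrac12+it_m,\chi;\tfrac12(a_m-1))\mathcal{N}_{P_R}(\tfrac12+it_m,\chi;\tfrac12 a_m)|^2$, up to the accumulated product $\prod_{1\le j\le R}\bigl(1+O(e^{-50a_\ast^2 K_j})\bigr)$ of error factors. The one point demanding care is that this last product is $O(1)$ uniformly in $q$; but this is precisely the estimate already used in the proof of Proposition~\ref{prop:JBound}, and it holds because $K_j=c_j^{-3/4}$ decays geometrically in $j$, so that $\sum_{1\le j\le R}e^{-50a_\ast^2 K_j}=O(1)$. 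With that in hand \eqref{Sdiffest} follows, and the remainder is routine bookkeeping.
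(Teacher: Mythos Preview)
Your argument is correct and is exactly the fleshing-out of what the paper leaves implicit: isolate the $J$-th block, apply Lemma~\ref{lem:expTaylorSeries} to compare $N_{J,m}$ with $E_{J,m}$ on $\mathcal{T}_k$, and then convert the remaining exponential factors back to $\mathcal{N}$-polynomials at the cost of the harmless product $\prod_{j\le R}(1+O(e^{-50a_\ast^2 K_j}))=O(1)$. The paper itself gives no further details beyond noting that the lemma ``follows readily from Lemma~\ref{lem:expTaylorSeries}'', so your write-up is essentially the intended proof.
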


  Our next result allows us to estimate the right-hand side expression in \eqref{Sdiffest}.  
\begin{prop}
\label{prop:RHtwisted}
Assuming the truth of GRH, we have
\begin{align*}
 \sum_{\chi \in \mathcal{T}_k} \prod_{1 \leq m\leq k} |L(\tfrac{1}{2} + it_m, \chi)|  |\mathcal{N}_{P_R}(\tfrac{1}{2} + it_m, \chi; \tfrac{1}{2}(a_m - 1)) \mathcal{N}_{P_R} ( \tfrac{1}{2} + it_m, \chi; \tfrac{1}{2}a_m)|^2  \ll   \varphi(q) \prod_{p\leq P_R} \Big(1 + \sum_{1\leq j,m \leq k} \frac{a_j a_m}{p^{1+i(t_j-t_m)}} \Big).
\end{align*}
\end{prop}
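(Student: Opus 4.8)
The mechanism here is not the approximate functional equation (which would impose a fatal length restriction for $k \geq 3$), but rather a pointwise upper bound for $\prod_m |L(\tfrac12+it_m,\chi)|$ coming from Lemma~\ref{lem:logZetaUpperBound}, applied at a well-chosen parameter $x$, followed by a second-moment evaluation of the resulting Dirichlet polynomials over all $\chi \in X_q$. First I would bound $\prod_m |L(\tfrac12+it_m,\chi)|$ trivially by noting that each $a_m \leq a_\ast$, so it suffices to control $\sum_{m} \log|L(\tfrac12+it_m,\chi)|$. Applying \eqref{logLboundnonquad} with the weights $a_j$ (legitimate after discarding quadratic characters as in the reduction at the start of Section~3) and with the choice $x = P_R$ bounds this by $2\Re\sum_{p \leq P_R}\tfrac{h(p)\chi(p)}{p^{1/2+1/\log P_R}}\tfrac{\log(P_R/p)}{\log P_R}$ plus a negligible prime-square term (absorbed since $\log P_R \ll \log q/\delta$ makes the $a(A+1)\log q/\log x$ term a constant power of $q$ — wait, this needs care: $\log P_R \asymp \delta \log q$ so that term is $\asymp 1/\delta$, a constant). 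Exponentiating, $\prod_m |L(\tfrac12+it_m,\chi)| \ll \prod_{m} \exp\bigl(2a_m \Re \sum_{p\leq P_R}\cdots\bigr)$, which up to the constant from Lemma~\ref{lem:expTaylorSeries} is $\ll \prod_m |\mathcal{N}_{P_R}(\tfrac12+it_m,\chi; a_m)|$ on $\mathcal{T}_k$; but here we actually want to keep the $L$-values as an honest Dirichlet polynomial bound.

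Next, on the set $\mathcal{T}_k$ the Dirichlet polynomials $\mathcal{N}_{P_R}(\tfrac12+it_m,\chi;\tfrac12(a_m-1))$, $\mathcal{N}_{P_R}(\tfrac12+it_m,\chi;\tfrac12 a_m)$ are of bounded size, and Lemma~\ref{lem:expTaylorSeries} lets me replace $|L(\tfrac12+it_m,\chi)|$ on $\mathcal{T}_k$ by (a constant times) $|\mathcal{N}_{P_R}(\tfrac12+it_m,\chi;a_m)|$ via the prime-sum bound just described, so the whole product of $L$-values and $\mathcal{N}$'s becomes a product of honest Dirichlet polynomials. Precisely, the left side is $\ll \sum_{\chi \in \mathcal{T}_k} \prod_m |\mathcal{N}_{P_R}(\tfrac12+it_m,\chi;a_m)|\cdot|\mathcal{N}_{P_R}(\tfrac12+it_m,\chi;\tfrac12(a_m-1))\mathcal{N}_{P_R}(\tfrac12+it_m,\chi;\tfrac12 a_m)|^2$. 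I would then drop the constraint $\chi \in \mathcal{T}_k$ and extend to $\chi \in X_q$, and apply Cauchy–Schwarz (or just expand the modulus-squared directly) to reduce to a genuine second moment $\sum_{\chi \in X_q}|D(\chi)|^2$ where $D$ is a product of $\mathcal{N}$'s of total length at most $q^{O(\delta)} \leq q^{1/2}$ by the length bound established in Section~4 — here $\delta$ chosen small is exactly what makes orthogonality \eqref{orthrel} applicable with no diagonal-breaking error.

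Then I would run the orthogonality computation exactly as in the proof of Proposition~\ref{prop:JBound}: $\sum_{\chi \in X_q} |D(\chi)|^2 = \varphi(q) \sum_{m \equiv n \,(q)} d(m)\overline{d(n)}/\sqrt{mn}$, and since $mn < q$ forces $m = n$, this is $\varphi(q)\sum_m |d(m)|^2/m$. Using the multiplicativity of the coefficient-bound functions $b''$ from Lemma~\ref{lem:nCoeffPatrol} and Proposition~\ref{prop:Njcoeffs}, together with $b''_{j,x,\mathbf{t},\mathbf{a}}(p) \leq a_\ast$ and the factorial decay $b''_{j,x,\mathbf{t},\mathbf{a}}(p^r) \leq a_\ast^r k^r/r!$ for $r \geq 2$ (and the cruder but still summable bound at small primes $p \leq \log q$), I get an Euler product $\prod_{p \leq P_R}(1 + (\text{main } p^{-1}\text{-term}) + O(p^{-2}))$; the main term at $p$, after the bookkeeping with the four polynomials and the shifts $t_1,\dots,t_k$, works out to $\sum_{1\leq j,m\leq k} a_j a_m p^{-1-i(t_j - t_m)}$, matching the claimed product. (The coefficient $a_x(p) = \log(P_R/p)p^{-1/\log P_R}/\log P_R$ contributes a factor that is $1 + O(1/\log p)$ on average and is absorbed into the $O(p^{-2})$ tail after summation, exactly as in \eqref{Tupperbound}.)

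\textbf{Main obstacle.} The delicate point is the bookkeeping in the Euler product: I have a product over $m$ of \emph{four} Dirichlet polynomials — $\mathcal{N}(\cdot;a_m)$ and, squared in modulus, $\mathcal{N}(\cdot;\tfrac12(a_m-1))\mathcal{N}(\cdot;\tfrac12 a_m)$ — so at each prime $p$ the coefficient of $p^{-1}$ is a sum over which of the polynomials contributes the prime $p$, with the shifts $p^{\pm i t_\ell}$ attached, and one must check this collapses to precisely $\sum_{j,m} a_j a_m p^{-1-i(t_j-t_m)}$ and not something larger. This is where the identities $\tfrac{2a_m}{u} + \tfrac{2k}{r_m} = 1$-type relations among the exponents (here: $\tfrac12(a_m-1)\cdot 2 + \tfrac12 a_m \cdot 2 + a_m$... one checks the linear combination of weights is exactly right) are used, and it is the step that genuinely mirrors \cite[Lemma 3.2, Proposition 3.3]{Curran} / the proof of Proposition~\ref{prop:RHtwisted}'s analogue in \cite{Curran24}. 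The contribution of prime squares at small primes $p \leq \log q$ (the $h_x$ versus $g_x$ discrepancy) must be checked to contribute only $O(\sum_{p\leq\log q} 1/p) = O(\log\log\log q)$ to the logarithm of the Euler product, hence a negligible $(\log\log q)^{O(1)}$ factor — but in fact one wants it absorbed with no loss, which follows because $\log q \leq P_1$ is false in general, so one uses the $b''_{1}$ bound at $p \leq \log q$ and the geometric-type decay to see the extra terms sum to $O(1)$ per prime and $O(\log\log\log q)$ total, then notes this is $\ll \prod_{p\leq P_R}(1+\cdots)$ since that product is already of size a positive power of $\log q$.
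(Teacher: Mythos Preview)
Your overall strategy is exactly the paper's: bound $|L(\tfrac12+it_m,\chi)|$ via Lemma~\ref{lem:logZetaUpperBound} with $x=P_R$, replace the resulting exponential by a truncated Taylor polynomial on $\mathcal{T}_k$ using Lemma~\ref{lem:expTaylorSeries}, extend to all $\chi\in X_q$, and evaluate the diagonal by orthogonality and multiplicativity. The identification of the $a(A+1)\log q/\log P_R \asymp 1/\delta = O(1)$ contribution is also correct.

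There is, however, a genuine bookkeeping error that breaks the final Euler-product computation. You apply \eqref{logLboundnonquad} with the weights $a_j$ and write $\prod_m|L(\tfrac12+it_m,\chi)| \ll \prod_m \exp(2a_m\Re\,\mathcal{P}) \approx \prod_m|\mathcal{N}_{P_R}(\tfrac12+it_m,\chi;a_m)|$. This is wrong on two counts: (i) applying the lemma with weights $a_j$ bounds $\prod_m|L|^{a_m}$, not $\prod_m|L|$; (ii) even granting your exponential, $\exp(2a_m\Re\,\mathcal{P})=|\exp(a_m\mathcal{P})|^2\approx|\mathcal{N}(\cdot;a_m)|^2$, not the first power. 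The upshot is that your summand $\prod_m|\mathcal{N}(\cdot;a_m)|\cdot|\mathcal{N}(\cdot;\tfrac12(a_m-1))\mathcal{N}(\cdot;\tfrac12 a_m)|^2$ is \emph{not} of the form $|D(\chi)|^2$, and the Cauchy--Schwarz you propose to repair this loses a genuine power of $\log q$: the resulting $p^{-1}$-coefficient is $\tfrac12\bigl(|\sum_m p^{-it_m}|^2+|\sum_m(2a_m-1)p^{-it_m}|^2\bigr)$, which in general strictly exceeds the target $|\sum_m a_m p^{-it_m}|^2$.

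The fix, and what the paper does, is to apply Lemma~\ref{lem:logZetaUpperBound} with $k=a_1=1$ separately for each $m$, giving $|L(\tfrac12+it_m,\chi)|\ll\exp\bigl(\Re\sum_j\mathcal{P}_{j,P_R}(\tfrac12+it_m,\chi)\bigr)=|\exp(\tfrac12\sum_j\mathcal{P}_{j,P_R})|^2\approx|\mathcal{N}_{P_R}(\tfrac12+it_m,\chi;\tfrac12)|^2$ on $\mathcal{T}_k$. Now the full summand is the perfect square $\prod_m|\mathcal{N}(\cdot;\tfrac12)\,\mathcal{N}(\cdot;\tfrac12(a_m-1))\,\mathcal{N}(\cdot;\tfrac12 a_m)|^2$, orthogonality applies directly (this is precisely Proposition~\ref{prop:RHMVDP} with $n=R+1$), and the $p^{-1}$-coefficient comes out exactly right because $\tfrac12+\tfrac12(a_m-1)+\tfrac12 a_m=a_m$. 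This is the identity you were groping for in your ``main obstacle'' paragraph. Also, a minor point: $a_{P_R}(p)=1+O(\log p/\log P_R)$, not $1+O(1/\log p)$; this is what feeds the $O(\log p/(p\log P_R))$ error in the Euler factor.
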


   In what follows, we shall establish Propositions \ref{prop:J1Lower}, \ref{prop:J2J1comp} and \ref{prop:RHtwisted} and then apply them to prove Proposition \ref{prop:I0Bound}.
 
\subsection{Proof of Proposition \ref{prop:J1Lower}}
\label{sect:prop5.2}
We consider the Dirichlet polynomials given by  
\begin{align*}
\begin{split}
\prod_{1 \leq m\leq k} \mathcal{N}_{j,P_R}(\tfrac{1}{2} + it_m, \chi; \tfrac{1}{2}(a_m - 1))^2 =: \sum_{n} \frac{q_j(n)\chi(n)}{n^{1/2 }}, \quad
\prod_{1 \leq m\leq k} \mathcal{N}_{P_R}(\tfrac{1}{2} + it_m, \chi; \tfrac{1}{2}(a_m - 1))^2 =: \sum_{n} \frac{q(n)\chi(n)}{n^{1/2 }}, \\
\prod_{1 \leq m\leq k} \mathcal{N}_{j,P_R}(\tfrac{1}{2} + it_m, \chi; \tfrac{1}{2}a_m)^2 =: \sum_{n} \frac{r_j(n)\chi(n)}{n^{1/2 }}, \quad \mbox{and} \quad
\prod_{1 \leq m\leq k} \mathcal{N}_{P_R}(\tfrac{1}{2} + it_m, \chi; \tfrac{1}{2}a_m)^2 =: \sum_{n} \frac{r(n)\chi(n)}{n^{1/2}}. 
\end{split}
\end{align*}
Observe that each $q_j(n)$ (resp. $r_j(n)$ ) is the two-fold Dirichlet convolution of $b_{j,P_R, {\bf t}, \tfrac{1}{2}({\bf a}-{\bf 1})}(n)$ (resp. $b_{j,P_R, {\bf t}, \tfrac{1}{2}{\bf a}}(n)$) with itself, where $\tfrac{1}{2}({\bf a}-{\bf 1})$ (resp.  $\tfrac{1}{2}\bf{a}$) denotes the vector with $j$-th component as $\tfrac{1}{2}(a_j - 1)$ (resp. $\tfrac{1}{2}a_j$).  Moreover, we set $q'_j(n)$ (resp. $r'_j(n)$) to be the two-fold Dirichlet convolution of $b'_{j,P_R, {\bf t}, \tfrac{1}{2}(\bf{a}-1)}(n)$ (resp. $b'_{j,P_R, {\bf t}, \tfrac{1}{2}\bf{a}}(n)$) with itself. We also define $q''_j(n)$ and $r''_j(n)$ in a similar fashion.  Here $q_j'$ and $r'_j$ can be regarded as multiplicative approximations of $q_j$ and $r_j$ and that $q''_j$ and $r''_j$ are non-negative multiplicative coefficients satisfying $|q_j|,|q'_j| \leq q''_j$ and $|r_j|,|r'_j| \leq r''_j$. 
The following bounds can be established in a manner similar to that of \cite[Lemma 4.5]{Curran}. Thus we omit its proof.
\begin{lemma}\label{lem:prodCoefPatrol}
 With the notation as above, we have for $p \in (P_{j-1},P_j]$ with $1 \leq j \leq R$, 
\begin{align*}
\begin{split}
q_j(p^l), r_j(p^l) \ll k^2 a_{\ast}^l l^{4k} e^{-l \log (l/2k)/4k + 2l}.
\end{split}
\end{align*}
\end{lemma}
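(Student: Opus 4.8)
The plan is to reduce the estimate to the coefficient bounds for a single copy of $\mathcal{N}_{j,P_R}$ recorded in Lemma~\ref{lem:nCoeffPatrol}, and then track the effect of forming the $2k$-fold convolution. Recall that, by the discussion preceding Lemma~\ref{lem:nCoeffPatrol}, for $p \in (P_{j-1},P_j]$ the coefficient $q_j(p^l)$ is the $2k$-fold Dirichlet convolution at $p^l$ of the coefficients of $\mathcal{N}_{j,P_R}(\tfrac12 + it_m,\chi;\tfrac12(a_m-1))$ (two copies for each of the $k$ values of $m$), and that each such coefficient is, up to a unimodular twist $p^{-\nu it_m}$, bounded in absolute value by $b''_{j,P_R,{\bf t},\tfrac12{\bf a}}(p^\nu)$ — in fact it is cleanest to bound everything by $b''$ with the scalar ${\bf a}$ replaced by $\tfrac12{\bf a}$ or $\tfrac12({\bf a}-{\bf 1})$, both of which are coordinatewise $\le {\bf a}$, so Lemma~\ref{lem:nCoeffPatrol} applies verbatim after renaming. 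Hence $|q_j(p^l)| \le \sum_{\nu_1+\cdots+\nu_{2k} = l} \prod_{i=1}^{2k} b''(p^{\nu_i})$, and the same for $r_j(p^l)$.

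First I would dispose of the small primes. For $p \in (P_{j-1},P_j]$ with $j \ge 2$, or with $j=1$ and $p > \log q$, Lemma~\ref{lem:nCoeffPatrol} gives $b''(p^\nu) \le a_\ast^\nu k^\nu/\nu!$ for $\nu \ge 2$ and $b''(p) \le a_\ast$; combining, $b''(p^\nu) \le a_\ast^\nu k^\nu/\nu!$ for all $\nu \ge 1$ (the case $\nu=1$ being $\le a_\ast \le a_\ast k$). Then
\[
|q_j(p^l)| \le \sum_{\nu_1+\cdots+\nu_{2k}=l} \prod_{i} \frac{a_\ast^{\nu_i} k^{\nu_i}}{\nu_i!} = a_\ast^l k^l \sum_{\nu_1+\cdots+\nu_{2k}=l}\prod_i \frac{1}{\nu_i!} = a_\ast^l k^l \frac{(2k)^l}{l!} = \frac{a_\ast^l (2k^2)^l}{l!},
\]
by the multinomial identity. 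Stirling's formula $l! \gg (l/e)^l$ turns $1/l!$ into $(e/l)^l$ up to a polynomial factor in $l$, so $|q_j(p^l)| \ll a_\ast^l (2ek^2/l)^l \cdot \mathrm{poly}(l)$, and one checks this is absorbed into the stated bound $k^2 a_\ast^l l^{4k} e^{-l\log(l/2k)/4k + 2l}$ since $e^{-l\log(l/2k)/4k}$ decays far more slowly than $(l/2ek^2)^{-l}$ once $l$ is large, while for bounded $l$ the bound is trivial because $l^{4k} \ge 1$. (Here the constant implied depends only on ${\bf a}$ and $k$, as permitted.)

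The main obstacle — and the only place real care is needed — is the remaining range $j=1$, $p \le \log q$, where $b''_{1,P_R,{\bf t},{\bf a}}(p^\nu) \le k a_\ast^\nu \nu^{2k} e^{-\nu\log(\nu/k)/2k + 2\nu}$ for $\nu \ge 2$ grows faster than $a_\ast^\nu k^\nu/\nu!$. Here I would again write $|q_1(p^l)| \le \sum_{\nu_1+\cdots+\nu_{2k}=l}\prod_i b''(p^{\nu_i})$ and use superadditivity of the exponent: since $x \mapsto x\log(x/k)$ is (eventually) convex, one has $\sum_i \nu_i \log(\nu_i/k) \ge$ (a constant multiple of) $l \log(l/2k)$ when $\sum \nu_i = l$ — more precisely, bounding each factor by its worst case and using that at most $2k$ parts share the mass $l$, the dominant configuration is one part of size $\sim l$, giving $\prod_i e^{-\nu_i\log(\nu_i/k)/2k} \le e^{-c\, l\log(l/2k)/2k}$ for a suitable $c$; the polynomial prefactors $\prod \nu_i^{2k} \le l^{2k\cdot 2k}$ and the number of compositions $\le l^{2k}$ are all absorbed into $l^{4k}$ (possibly after enlarging the constant in the exponent from $1/2k$ to $1/4k$, which is exactly why the lemma is stated with $1/4k$ rather than $1/2k$), and the factors $e^{2\nu_i}$ multiply to $e^{2l}$ and the $k$'s to at most $k^{2k} \ll k^2$ after the same absorption. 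This yields $|q_1(p^l)| \ll k^2 a_\ast^l l^{4k} e^{-l\log(l/2k)/4k + 2l}$, and the identical argument handles $r_1(p^l)$ with $\tfrac12{\bf a}$ in place of $\tfrac12({\bf a}-{\bf 1})$. Since all four sub-cases now match the claimed bound, the lemma follows; as noted, the proof is a routine adaptation of \cite[Lemma~4.5]{Curran}, differing only in carrying the extra character twists $p^{-it_m}$, which are harmless as they have modulus one.
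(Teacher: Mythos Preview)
Your argument is correct and follows the route the paper defers to (namely \cite[Lemma~4.5]{Curran}): bound by the non-negative majorants, insert the prime-power estimates from Lemma~\ref{lem:nCoeffPatrol}, and control the convolution via Stirling and the convexity of $x\mapsto x\log(x/k)$. One small bookkeeping remark: since the paper records $q_j$ as the \emph{two}-fold convolution of $b_{j,P_R,{\bf t},\tfrac12({\bf a}-{\bf 1})}$ with itself, and $b''$ already bounds the full $k$-fold product $\prod_m\mathcal N_{j,P_R}$, the cleaner implementation is the two-term sum $q''_j(p^l)=\sum_{\nu_1+\nu_2=l}b''(p^{\nu_1})b''(p^{\nu_2})$---this is exactly where the prefactor $k^2$ in the stated bound comes from---rather than your $2k$-fold expansion, though the latter is still a valid (looser) majorant.
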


From the above lemma, we deduce that there exists a constant $C_0$ depending on $k$ and ${\bf a}$ only, such that $q_j(p^l), r_j(p^l) \ll C_0^l$ for all $l \geq 1$. It then follows from this that, for any $n \geq 1$ and $\varepsilon>0$, 
\begin{align}
\begin{split}
\label{qrbound}
 q_j(n), r_j(n) \ll C_0^{200ka_\ast^2 K_1+100ka_\ast^2 K_2+\cdots+100ka_\ast^2 K_R} \ll q^{\varepsilon}.
\end{split}
\end{align}

We now apply Lemma \ref{lem:AFEpure} to see that for a parameter $X>0$ to be determined later,
\begin{align}
\begin{split}
\label{Jdef}
 \mathcal{J}_1 =&  \sum_{\substack{n, h, m \\ h,m \leq q^{2/10^M}}} \frac{\tau_{\bf{t}}(n) q(h)\overline{r(m)}}{n^{1/2} h^{1/2} m^{1/2}} \sum_{\chi \in X^*_q} \chi(nh)\overline \chi(m)W_{\mathfrak{a}, {\bf t}}\Big( \frac {nX}{q^{k/2}} \Big) \\
& \hspace*{1cm} +\Big( \frac {q} {\pi} \Big)^{-i\sum_{j \leq k}t_j}\sum_{\substack{n, h, m \\ h,m \leq q^{2/10^M}}} \frac{\tau_{-\bf{t}}(n) q(h)\overline{r(m)}}{n^{1/2} h^{1/2} m^{1/2}}\sum_{\chi \in X^*_q} \chi(h)\overline \chi(nm)  \Big (\frac {i^{\mathfrak{a}}q^{1/2}}{\tau(\chi)}\Big )^k W_{\mathfrak{a}, -{\bf t}} \Big(\frac {n}{Xq^{k/2}} \Big) \\
=: & \mathcal{J}_{1,1}+\mathcal{J}_{1,2}.
\end{split}
\end{align}

Note that (see \cite[Theorem 9.5]{MVa1}) for a primitive Dirichlet character $\chi$ modulo $q$,
\begin{align*}
 \tau(\chi) \overline{\tau(\chi)}=q.
\end{align*}
  Moreover, (see \cite[Theorem 9.5]{MVa1})
\begin{align*}
 \overline{\tau(\chi)} =\chi(-1)\tau(\overline\chi).
\end{align*}

Using the above formulas, we get
\begin{align}
\begin{split}
\label{sumchi}
 &\sum_{\chi \in X^*_q} \chi(h)\overline \chi(nm)  \Big (\frac {i^{\mathfrak{a}}q^{1/2}}{\tau(\chi)}\Big )^k W_{\mathfrak{a}, -{\bf t}} \Big(\frac {n}{Xq^{k/2}} \Big) 
 = \sum_{\chi \in X^*_q} \chi(h)\chi(-1)^k\overline \chi(nm) \tau(\overline\chi)^k \Big (\frac {i^{\mathfrak{a}}q^{1/2}}{q} \Big )^k W_{\mathfrak{a}, -{\bf t}} \Big(\frac {n}{Xq^{k/2}} \Big) \\
 =& \big (iq^{-1/2}\big )^kW_{1, -{\bf t}}\Big(\frac {n}{Xq^{k/2}}\Big)\sum_{\substack{\chi \in X^*_q \\ \chi(-1)=1}} \chi(h)\overline \chi(nm) \tau(\overline\chi)^k  +\big (iq^{-1/2}\big )^kW_{-1, -{\bf t}} \Big( \frac {n}{Xq^{k/2}} \Big)\sum_{\substack{\chi \in X^*_q \\ \chi(-1)=-1}} \chi(h)\overline \chi(nm) \tau(\overline\chi)^k \\
 =& \big (iq^{-1/2}\big )^kW_{1, -{\bf t}} \Big(\frac {n}{Xq^{k/2}}\Big)\sum_{\substack{\chi \in X^*_q }} \frac {\chi(1)+\chi(-1)}{2}\chi(h)\overline \chi(nm) \tau(\overline\chi)^k  \\
 & \hspace*{2cm} +\big (iq^{-1/2}\big )^kW_{-1, -{\bf t}}\Big(\frac {n}{Xq^{k/2}}\Big)\sum_{\substack{\chi \in X^*_q }} \frac {\chi(1)-\chi(-1)}{2}\chi(h)\overline \chi(nm) \tau(\overline\chi)^k \\
 =& \frac 12\big (iq^{-1/2}\big )^k\Big (W_{1, -{\bf t}} \Big(\frac {n}{Xq^{k/2}}\Big)+W_{-1, -{\bf t}}\Big(\frac {n}{Xq^{k/2}}\Big)\Big )\sum_{\substack{\chi \in X^*_q }} \chi(h)\overline \chi(nm) \tau(\overline\chi)^k  \\
 & \hspace*{2cm} +\frac 12\big (iq^{-1/2}\big )^k\Big (W_{1, -{\bf t}} \Big(\frac {n}{Xq^{k/2}}\Big)-W_{-1, -{\bf t}}\Big(\frac {n}{Xq^{k/2}}\Big)\Big )\sum_{\substack{\chi \in X^*_q }} \chi(-h)\overline \chi(nm) \tau(\overline\chi)^k.
\end{split}
\end{align}

  We write $\mu$ for the M\"obius function and note the following relation (see \cite[Lemma 1]{Sound2007}).  For $(a, q)=1$,
\begin{align}
\label{sumchistar}
 & \sumstar_{\substack{ \chi \shortmod q }}\chi(a)=\sum_{c | (q, a-1)}\mu(q/c)\varphi(c),
\end{align}
  where $\sum^*$ stands for the sum over primitive characters modulo $q$. \newline

  In particular, setting $a=1$ in \eqref{sumchistar} implies that
\begin{align}
\label{chistar}
 & \phis(q)=\sum_{c | q}\mu(q/c)\varphi(c).
\end{align}

   For any integer $v$ that is co-prime to $q$, we write $\overline{v}$ for $v^{-1} \pmod q$ and the multiple Kloosterman's sum $S_k(v,q)$ is defined by (see \cite[(11.55)]{iwakow})
\begin{align*}
  S_k(v, q)=\sumstar_{\substack{ x_1, \cdots, x_k \shortmod q \\ x_1x_2\cdots x_k=v}} \exp \Big( \frac {x_1+\cdots+x_k}{q} \Big).
\end{align*}
The well-known Weil's bound for Kloosterman's sum (see \cite[p. 309]{iwakow}) asserts that
\begin{align}
\label{Weilbound}
  |S_k(v, q)| \leq d(q)q^{(k-1)/2},
\end{align}
  where $d(\cdot)$ denotes the divisor function. \newline

  We deduce from \eqref{chistar} and \eqref{Weilbound} that
\begin{align*}
 \sum_{\substack{\chi \in X^*_q }} \chi(\pm h)\overline \chi(nm) \tau(\overline\chi)^k =&  \sum_{\substack{\chi \in X^*_q }} \chi(\pm h)\overline \chi(nm)   \sumstar_{\substack{ v \shortmod q }} \overline \chi(\overline{v}) S_k(\overline{v},q)=\sum_{c | q}\mu(q/c)\varphi(c)\sumstar_{\substack{ v \shortmod q \\ vh \equiv \pm nm \shortmod c} } S_k(\overline{v},q) \\
 \ll & d(q)q^{(k-1)/2}\sum_{c | q}\varphi(c)\sumstar_{\substack{ v \shortmod q \\ vh \equiv \pm nm \shortmod c} }1.
\end{align*}

The number of $v \pmod q$ such that $vh \equiv \pm mn \pmod c$ is easily seen to be
$\leq q(h,c)/c$. These estimates, together with the bound $d(n) \ll n^{\varepsilon}$ for any $\varepsilon>0$ (see \cite[(2.20)]{MVa1}), lead to
\begin{align}
\label{sumchistarest}
 \sum_{\substack{\chi \in X^*_q }} \chi(\pm h)\overline \chi(nm) \tau(\overline\chi)^k \leq d(q)q^{(k-1)/2}\sum_{c | q}\varphi(c)\frac {q(h,c)}{c} \ll hq^{(k+1)/2+\varepsilon}.
\end{align}

Now, from \eqref{qrbound}--\eqref{sumchistar} and \eqref{sumchistarest},
\begin{align*}
 \mathcal{J}_{1,2} \ll &\Big( \frac {q} {\pi} \Big)^{-i\sum_{j \leq k}t_j}\sum_{\substack{n, h, m \\ h,m \leq q^{2/10^M}}} \frac{\tau_{-\bf{t}}(n) q(h)\overline{r(m)}}{n^{1/2} h^{1/2} m^{1/2}}\sum_{\chi \in X^*_q} \chi(h)\overline \chi(nm)  \Big (\frac {i^{\mathfrak{a}}q^{1/2}}{\tau(\chi)}\Big )^k W_{\mathfrak{a}, -{\bf t}} \Big(\frac {n}{Xq^{k/2}} \Big) \\
 \ll & q^{1/2+\varepsilon}\sum_{\substack{n, h, m \\ h,m \leq q^{2/10^M}}} \frac{|\tau_{-\bf{t}}(n)|h}{n^{1/2} h^{1/2} m^{1/2}}\Big ( \Big | W_{-1, -{\bf t}} \Big(\frac {n}{Xq^{k/2}} \Big) \Big |+\Big | W_{-1, -{\bf t}} \Big(\frac {n}{Xq^{k/2}} \Big) \Big | \Big )  \ll q^{4/10^{M}}q^{1/2+\varepsilon}X^{1/2+\varepsilon}q^{k/4}.
\end{align*}

Moreover,
\begin{align}
\begin{split}
\label{sumchi1}
 \sum_{\chi \in X^*_q} & \chi(nh)\overline \chi(m)W_{\mathfrak{a}, {\bf t}}\Big(\frac {nX}{q^{k/2}}\Big)  = W_{1, {\bf t}}\Big(\frac {nX}{q^{k/2}}\Big)\sum_{\substack{\chi \in X^*_q \\ \chi(-1)=1}} \chi(nh)\overline \chi(m)  +W_{-1, {\bf t}} \Big(\frac {nX}{q^{k/2}} \Big)\sum_{\substack{\chi \in X^*_q \\ \chi(-1)=-1}} \chi(nh)\overline \chi(m)  \\
 =& W_{1, {\bf t}}\Big(\frac {nX}{q^{k/2}}\Big)\sum_{\substack{\chi \in X^*_q }} \frac {\chi(1)+\chi(-1)}{2}\chi(nh)\overline \chi(m) +W_{-1, {\bf t}}\Big(\frac {nX}{q^{k/2}}\Big)\sum_{\substack{\chi \in X^*_q }} \frac {\chi(1)-\chi(-1)}{2}\chi(nh)\overline \chi(m)  \\
 =& \frac 12\Big (W_{1, {\bf t}}\Big(\frac {nX}{q^{k/2}}\Big)+W_{-1, {\bf t}}\Big(\frac {nX}{q^{k/2}}\Big)\Big )\sum_{\substack{\chi \in X^*_q }} \chi(nh)\overline \chi(m)  +\frac 12\Big (W_{1, {\bf t}}\Big(\frac {nX}{q^{k/2}}\Big)-W_{-1, {\bf t}}\Big(\frac {nX}{q^{k/2}}\Big)\Big )\sum_{\substack{\chi \in X^*_q }} \chi(-nh)\overline \chi(m).
\end{split}
\end{align}

  We apply \eqref{Jdef}, \eqref{sumchistar} and \eqref{sumchi1} to arrive at
\begin{align}
\begin{split}
\label{J11decomp}
 \mathcal{J}_{1,1} =& \frac 12\sum_{c | q}\mu(q/c)\varphi(c) \sum_{\substack{n, h, m \\ h,m \leq q^{2/10^M}\\ nh \equiv m \bmod c}} \frac{\tau_{\bf{t}}(n) q(h)\overline{r(m)}}{n^{1/2} h^{1/2} m^{1/2}} \Big (W_{1, {\bf t}}\Big(\frac {nX}{q^{k/2}}\Big)+W_{-1, {\bf t}}\Big(\frac {nX}{q^{k/2}} \Big)\Big ) \\
 & \hspace*{1cm} +\frac 12\sum_{c | q}\mu(q/c)\varphi(c) \sum_{\substack{n, h, m \\ h,m \leq q^{2/10^M}\\ nh \equiv -m \bmod c}} \frac{\tau_{\bf{t}}(n) q(h)\overline{r(m)}}{n^{1/2} h^{1/2} m^{1/2}} \Big (W_{1, {\bf t}}\Big(\frac {nX}{q^{k/2}}\Big)-W_{-1, {\bf t}}\Big(\frac {nX}{q^{k/2}}\Big)\Big ).
\end{split}
\end{align}

  We recast the condition $nh \equiv \pm m \pmod c$ as $nh = \mp m+cl$.  We first consider the contribution from the terms with $l \geq 1$. As the treatments as similar, we only dead with the case $nh + m=cl$ here. By the rapid decay of $W_{\pm 1, {\bf t}}(x)$ given in \eqref{Westimation}, we may assume that $n \leq (q^{k/2}/X)^{1+\varepsilon}$ and the condition $nh + m=cl$ then implies that $l \leq  q^{k/2+2/10^{M}+\varepsilon}/(Xc)$.
We consider the sum over the terms satisfying $nh \geq m$. This implies that $nh \geq cl/2$ so that, together with the observation that $q(h), h(m) \ll q^{\varepsilon}$, the total contribution from these terms is
\begin{align}
\label{Firstmomentsum1nondiag1}
 \ll &  \sum_{c | q} \varphi(c)  q^{\varepsilon}X^{\varepsilon} \sum_{m  \leq q^{2/10^{M}}}  \sum_{l \leq q^{k/2+2/10^{M}+\varepsilon}/(Xc)}\frac {1}{\sqrt{mlc}}
\ll X^{-1/2+\varepsilon}q^{k/4+2/10^{M}+\varepsilon}.
\end{align}

  Similarly, the contribution from the terms $m \geq nh$ and $nh + m=cl$ with $l \geq 1$ (by noting that
$l \leq  q^{2/10^{M}}/c$ and $m \geq lc$) is
\begin{align}
\label{Firstmomentsum1nondiag2}
 \ll &  \sum_{c | q} \varphi(c)  q^{\varepsilon}X^{\varepsilon} \sum_{h  \leq q^{2/10^{M}}}\sum_{n \leq (q^{k/2}/X)^{1+\varepsilon}}
  \sum_{l \leq q^{2/10^{M}}/c}\frac {1}{\sqrt{nhlc}}
\ll X^{-1/2+\varepsilon}q^{k/4+2/10^{M}+\varepsilon}.
\end{align}

Setting $X=q^{-1/2-2/10^{M}}$, from \eqref{Jdef}, \eqref{chistar}, \eqref{J11decomp},  \eqref{Firstmomentsum1nondiag1} and \eqref{Firstmomentsum1nondiag2}, upon taking $\delta$ sufficiently small (so that $M$ is large enough), we have, for $k=2$, 
\begin{align}
\label{J11estmation}
\mathcal{J}_{1} \gg & \phis(q)  \sum_{\substack{n, h, m  \\ h,m  \leq q^{2/10^M}\\ nh = m }} \frac{\tau_{\bf{t}}(n) q(h)\overline{r(m)}}{n^{1/2} h^{1/2} m^{1/2}} \Big (W_{1, {\bf t}}\Big(\frac {nX}{q^{k/2}}\Big)+W_{-1, {\bf t}}\Big(\frac {nX}{q^{k/2}}\Big)\Big )+O(q^{1-\varepsilon}).
\end{align}
   
The condition $nh = m$ implies that $n \leq m \leq q^{2/10^M}$.  Hence it follows from \eqref{Westimation} and \eqref{J11estmation} that 
\[
\mathcal{J}_1 \gg \phis(q)  \sum_{\substack{n, h, m\\ nh = m }} \frac{\tau_{\bf{t}}(n) q(h)\overline{r(m)}}{n^{1/2} h^{1/2} m^{1/2}} +O(q^{1-\varepsilon}).
\]
  
 We recast the summation above by multiplicativity as
\[
\sum_{\substack{h,m \\ h|m}} \frac{\tau_{\bf{t}}(m/h) q(h) \overline{r(m)}}{m} = \prod_{1 \leq  j\leq R} \sum_{\substack{p| h,m \Rightarrow p\in (P_{j-1},P_j] \\ h|m }} \frac{\tau_{\bf{t}}(m/h) q_j(h) \overline{r_j(m)}}{m}.
\]

If $q_j(h) \neq q'_j(h)$, then $\Omega(h) \geq 100 a_\ast^2 K_j$. Thus, we may apply Rankin's trick to replace $q_j,r_j$ with $q'_j, r'_j$  respectively for each $j$ at the cost of an error of size
\begin{align*}
\ll &e^{-100a_\ast^2 K_j} \sum_{\substack{p| h,m \Rightarrow p\in (P_{j-1},P_j] \\ h|m }} \frac{e^{\Omega(h)}|\tau_{\bf{t}}(m/h)| q''_j(h) r''_j(m)}{m} \ll e^{-100a_\ast^2 K_j} \prod_{p\in(P_{j-1},P_j]} \left(1 + \frac{2 e a_\ast^2}{p} + O \left(\frac{1}{p^2}\right)\right) \ll  e^{-50 a_\ast^2 K_j}.
\end{align*}
 Here the $O \left(p^{-2}\right)$ term above emerges from the estimate $|\tau_{{\bf t}}(p^l)| \ll_{\varepsilon} p^{\varepsilon l}$ for any integer $l \geq 1$ and any real $\varepsilon>0$ (which can be established in a manner similar to \cite[(2.20)]{MVa1}) and Lemma \ref{lem:prodCoefPatrol}.
It follows that
\begin{align*}
\mathcal{J}_1 \gg & \phis(q)\prod_{1 \leq  j\leq R}\left( \prod_{p\in(P_{j-1},P_j]}\left( \sum_{0 \leq x \leq y} \frac{\tau_{{\bf t}}(p^{y-x}) q_j(p^x) \overline{r_j(p^y)}}{p^y} \right) + O (e^{-50 a_\ast^2 K_j})\right)+O(q^{1-\varepsilon})\\
=&  \phis(q)\ \prod_{1 \leq  j\leq R}\left( \prod_{p\in(P_{j-1},P_j]}\left( 1 + \frac{q_j(p) \overline{r_j(p)} + \tau_{{\bf t}}(p) \overline{r_j(p)} }{p} + O \left(\frac{1}{p^2}\right)\right) + O (e^{-50 a_\ast^2 K_j})\right)+O(q^{1-\varepsilon}) \\
=&  \phis(q)\ \prod_{1 \leq  j\leq R}\left( \prod_{p\in(P_{j-1},P_j]}\left( 1 + \frac{1}{p} \Bigg|a_{P_R}(p) \sum_{1 \leq  m \leq k} a_m p^{-it_m}\Bigg|^2 + O \left(\frac{1}{p^2}\right)\right) + O (e^{-50 a_\ast^2 K_j})\right)+O(q^{1-\varepsilon}).
\end{align*}
The last equality above follows by observing that
\[
q_j(p) + \tau_{{\bf t}}(p) = a_{P_R}(p) \sum_{1 \leq m\leq k} a_m p^{-i t_m} = r_j(p).
\]

 By further recalling that  $a_{P_R}(p) = p^{-1/\log P_R} (1 - \log p/ \log P_R)$, we see that
\begin{align}
\begin{split}
\label{J1lowerbound}
 \mathcal{J}_1 \gg &  \phis(q)\prod_{1 \leq  j\leq R}\left( \prod_{p\in(P_{j-1},P_j]}\left( 1 + \frac{1}{p} \Bigg|\sum_{1 \leq  m \leq k} a_m p^{-it_m}\Bigg|^2 + O \left( \frac{\log p}{p \log P_R} + \frac{1}{p^2}\right)\right) + O (e^{-50 a_\ast^2 K_j})\right)+O(q^{1-\varepsilon}) \\
\gg &  \varphi(q)\prod_{1 \leq  j\leq R}\left( \prod_{p\in(P_{j-1},P_j]}\left( 1 + \frac{1}{p} \Bigg|\sum_{1 \leq  m \leq k} a_m p^{-it_m}\Bigg|^2 \right) \right),
\end{split}
\end{align}
  where the last minorant above follows from Lemma \ref{RS}, the observation that $\prod_{1 \leq j\leq R} (1 +O(e^{-50a_\ast^2 K_j}))^2 = O(1)$ and our assumptions $\phis(q) \geq \eta \varphi(q)$ and $\varphi(q) \gg q^{1-\varepsilon}$ (which follows from \cite[Theorem 2.9]{MVa1}). This completes the proof of the proposition. 

\subsection{Proof of Proposition \ref{prop:J2J1comp}}

  In order to analyze $\mathcal{J}_2$, we want to first partition the set $X^*_q \setminus \mathcal{T}_k$ into unions of certain subsets. To this end, we 
define for any subset $\mathcal{A}$ of $[k] =: \{1,\ldots, k\}$, 
\[
\mathcal{T}_\mathcal{A} =:  \left\{ \chi \in X^*_q: |P_{j,P_R} (\tfrac{1}{2} + i t_m, \chi)| \leq K_j \text{ for all } 1\leq j\leq R \text{ if and only if } m \in \mathcal{A}\right\},
\]
  With this notation, we see that 
\[
X^*_q \setminus \mathcal{T}_k =  \bigsqcup_{\mathcal{A} \subsetneq [k]} \mathcal{T}_\mathcal{A} .
\]

 Now, to further decompose each $\mathcal{A}$, we define for $1 \leq j \leq R$, 
\begin{align*}
\mathcal{B}_j =: & \ \{1 \leq m \leq k: |\mathcal{P}_{r,P_s} (\tfrac{1}{2} + i t_m, \chi)| \leq K_j \text{ for all } 1\leq r < j\text{ and } r\leq s \leq  R \\
& \hspace*{2cm} \text{ but } |\mathcal{P}_{j,P_s} (\tfrac{1}{2} + i t_m, \chi)|  > K_{j} \text{ for some } j \leq s \leq R \}.
\end{align*}
We then write $\mathcal{A} = [k]\setminus [a]$ to note that for each $\chi \in \mathcal{T}_\mathcal{A}$, there is a function $F_{\chi}: [a] \rightarrow [R]$ such that $t_j \in \mathcal{B}_{F(j)}$. It follows that we have
\[
\mathcal{T}_\mathcal{A} =   \bigsqcup_{A \subsetneq [k]} \mathcal{B}_{\mathcal{A},n}, \quad \mbox{where} \quad \mathcal{B}_{\mathcal{A},n} = \{\chi \in \mathcal{T}_\mathcal{A}: \min_{j \in [a]} F_{\chi}(j) = n \}.
\]
We thus conclude that
\[
X^*_q \setminus \mathcal{T}_k =  \bigsqcup_{n\leq R} \bigsqcup_{\mathcal{A}\subsetneq [k]} \mathcal{B}_{\mathcal{A},n}.
\]

Suppose now that $n > 1$. We apply Lemma \ref{lem:logZetaUpperBound} by setting $k=a_1=1, x = P_{n-1}$ there.  This gives that for any $\chi \in X^*_q$ and $1 \leq m \leq k$, 
\begin{align*}
|L(\tfrac{1}{2} + it_m, \chi)| 
\ll &
\exp\Big(\Re \Big( \sum_{1 \leq j < n}  \mathcal{P}_{j,P_{n-1}} (\tfrac{1}{2}  + it_m, \chi) \Big)+ \frac {A+1}{ c_{n - 1}} \Big).
\end{align*}

 It follows from this, the definition of $\mathcal{B}_{\mathcal{A},n}$ and Lemma \ref{lem:expTaylorSeries} that 
\begin{align}
\begin{split}
\label{sumB}
\sum_{\mathcal{B}_{\mathcal{A},n}} & \prod_{1 \leq m\leq k} |L(\tfrac{1}{2} + it_m, \chi)| |\mathcal{N}_{P_R}(\tfrac{1}{2} + it_m, \chi; \tfrac{1}{2}(a_m - 1))\mathcal{N}_{P_R}(\tfrac{1}{2} + it_m, \chi; \tfrac{1}{2}a_m)|^2  \\
\ll & ~e^{\frac {A+1}{c_{n - 1}}}  \sum_{\mathcal{B}_{\mathcal{A},n}}\prod_{m = 1}^k \prod_{1 \leq j < n} \exp\left(\Re\mathcal{P}_{j,P_{n-1}} (\tfrac{1}{2}  + it_m, \chi) \right) |\mathcal{N}_{P_R}(\tfrac{1}{2} + it_m, \chi; \tfrac{1}{2}(a_m - 1))\mathcal{N}_{P_R}(\tfrac{1}{2} + it_m, \chi; \tfrac{1}{2}a_m)|^2   \\
\ll&  ~e^{\frac {A+1}{c_{n - 1}}} \max_{\substack{\ell \in [a] \\ s \in [R]}}
\sum_{\chi \in X_q} |\mathcal{P}_{n,P_s}(\tfrac{1}{2} + i t_{\ell}, \chi)/K_{n}|^{2 \lceil 1/ 20 c_{n}\rceil} \prod_{m = 1}^k \prod_{1 \leq j < n} |\mathcal{N}_{j,P_{n-1}} (\tfrac{1}{2}+ it_m, \chi ; \tfrac{1}{2}) |^2  \\
&\qquad\qquad\qquad\qquad\times |\mathcal{N}_{P_R}(\tfrac{1}{2} + it_m, \chi; \tfrac{1}{2}(a_m - 1))\mathcal{N}_{P_R}(\tfrac{1}{2} + it_m, \chi; \tfrac{1}{2}a_m)|^2.
\end{split}
\end{align}
  Here $\lceil \ell \rceil = \min \{ m \in \intz : \ell \leq m \}$ is the ceiling of $\ell$. \newline

  To evaluate the last sum above,  we consider, for a given $j$, sums of the form:
$$A_j(\chi)= \sum_{n \in S_j}a_j(n)\chi(n),$$
 where $\chi$ is a Dirichlet character modulo $q$. If the elements of $S_{i}$ and $S_{j}$ are mutually co-prime whenever $i \neq j$ and that $\prod^J_{j=1}n_j =o(q)$ for all $n_j \in S_j$, then an application of the  orthogonality relation \eqref{orthrel} implies that 
\begin{align}
\begin{split}
\label{orthorelprod}
 & \sum_{\chi \in X_q} \prod^J_{j=1}|A_j(\chi)|^2=\varphi(q)\prod^J_{j=1} \big ( \frac 1{\varphi(q)} \sum_{\chi \in X_q} |A_j(\chi)|^2 \big ). 
\end{split}
\end{align}   
    We note further that by expanding the last product in \eqref{sumB} into Dirichlet series, the length of which is $\ll q^{1-\varepsilon}$ by arguing similar to our discussions on the length of $\prod_{1 \leq m\leq k} \mathcal{N}_{x}(s, \chi; a_m)$ in Section \ref{sect:J}. It follows from this and \eqref{orthorelprod} that 
\begin{align}
\begin{split}
\label{sumBdecomp}
  \sum_{\chi \in X_q} & |\frac {\mathcal{P}_{n,P_s}(\tfrac{1}{2} + i t_{\ell}, \chi)}{K_{n}}|^{2 \lceil \frac {1}{20 c_{n}}\rceil} \prod_{m = 1}^k \prod_{1 \leq j < n} |\mathcal{N}_{j,P_{n-1}} (\tfrac{1}{2}+ it_m, \chi ; \tfrac{1}{2}) |^2 |\mathcal{N}_{P_R}(\tfrac{1}{2} + it_m, \chi; \tfrac{1}{2}(a_m - 1))\mathcal{N}_{P_R}(\tfrac{1}{2} + it_m, \chi; \tfrac{1}{2}a_m)|^2 \\
=& \varphi(q)\prod_{1 \leq j<n} \big (\frac 1{\varphi(q)} \sum_{\chi \in X_q} \prod_{k = 1}^m|\mathcal{N}_{j,P_{n-1}} (\tfrac{1}{2}+ it_m, \chi ; \tfrac{1}{2}) \mathcal{N}_{j,P_R}(\tfrac{1}{2} + it_m, \chi; \tfrac{1}{2}(a_m - 1))\mathcal{N}_{j,P_R}(\tfrac{1}{2} + it_m, \chi; \tfrac{1}{2}a_m)|^2 \big )\\
& \hspace*{1cm} \times \frac 1{\varphi(q)}\sum_{\chi \in X_q} |\mathcal{P}_{n,P_s}(\tfrac{1}{2} + i t_{\ell}, \chi)/K_{n}|^{2 \lceil 1/ 20 c_{n}\rceil}  \prod_{m = 1}^k |\mathcal{N}_{n,P_R}(\tfrac{1}{2} + it_m, \chi; \tfrac{1}{2}(a_m - 1))\mathcal{N}_{n,P_R}(\tfrac{1}{2} + it_m, \chi; \tfrac{1}{2}a_m)|^2 \\
& \hspace*{1cm} \times \prod_{n < j \leq R}\frac 1{\varphi(q)}\sum_{\chi \in X_q} \prod_{m = 1}^k |\mathcal{N}_{j,P_R}(\tfrac{1}{2} + it_m, \chi; \tfrac{1}{2}(a_m - 1))\mathcal{N}_{j,P_R}(\tfrac{1}{2} + it_m, \chi; \tfrac{1}{2}a_m)|^2. 
\end{split}
\end{align}  
  
  It therefore remains to estimate the sums appearing in the last expression above. This done through three propositions. As the arguments are similar to those given in Section \ref{sect:J} and Section \ref{sect:prop5.2}, we shall only sketch their proofs.
\begin{prop}
\label{prop:RHMVDP}
We have, for $1 \leq j < n$, 
\begin{align*}
\sum_{\chi \in X_q} \prod_{m = 1}^k & |\mathcal{N}_{j,P_{n-1}} (\tfrac{1}{2}+ it_m, \chi ; \tfrac{1}{2}) \mathcal{N}_{j,P_R}(\tfrac{1}{2} + it_m, \chi; \tfrac{1}{2}(a_m - 1))\mathcal{N}_{j,P_R}(\tfrac{1}{2} + it_m, \chi; \tfrac{1}{2}a_m)|^2 \\
\ll  & \varphi(q) \prod_{p \in (P_{j-1},P_j]}  \left(1 + \frac{1}{p} \sum_{1\leq j , m \leq k} \frac{a_j a_m}{p^{i(t_j - t_m)}}  + O  \left(\frac{\log p}{p \log P_{n-1}} + \frac{1}{p^2}\right) \right) + O(e^{-50a_\ast^2 K_j}).
\end{align*}
\end{prop}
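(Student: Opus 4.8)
The plan is to prove Proposition~\ref{prop:RHMVDP} by transcribing the treatment of $\mathcal{J}$ in Section~\ref{sect:J}, supplemented by the local Euler-factor computation carried out for $\mathcal{J}_1$ in Section~\ref{sect:prop5.2}; no analytic information about $L$-functions is needed here, only the orthogonality of characters together with the combinatorial bounds of Lemmas~\ref{lem:nCoeffPatrol} and~\ref{lem:prodCoefPatrol}. First I would expand the triple product over $m$ into a single Dirichlet series,
\[
\prod_{1 \leq m\leq k} \mathcal{N}_{j,P_{n-1}}(\tfrac12+it_m,\chi;\tfrac12)\,\mathcal{N}_{j,P_R}(\tfrac12+it_m,\chi;\tfrac12(a_m-1))\,\mathcal{N}_{j,P_R}(\tfrac12+it_m,\chi;\tfrac12 a_m) = \sum_n \frac{d_j(n)\chi(n)}{n^{1/2}},
\]
observing via Proposition~\ref{prop:Njcoeffs} that $d_j$ is supported on integers all of whose prime factors lie in $(P_{j-1},P_j]$, and that — exactly as in the length estimate for $\prod_m \mathcal{N}_x(s,\chi;a_m)$ in Section~\ref{sect:J} — the left-hand side is a Dirichlet polynomial of length at most $q^{1/2}$ once $\delta$ is small enough. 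Squaring out, invoking the orthogonality relation~\eqref{orthrel}, and using that two positive integers below $q^{1/2}$ which are congruent modulo $q$ must coincide, the character sum collapses to its diagonal:
\[
\sum_{\chi \in X_q} \Big| \sum_n \frac{d_j(n)\chi(n)}{n^{1/2}} \Big|^2 = \varphi(q) \sum_{(n,q)=1} \frac{|d_j(n)|^2}{n} \leq \varphi(q) \sum_n \frac{|d_j(n)|^2}{n}.
\]

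Next I would pass to a non-negative multiplicative majorant $d''_j(n)\geq|d_j(n)|$, built from the coefficients $b''_{j,\cdot}$ of Lemma~\ref{lem:nCoeffPatrol} in exactly the way $q''_j$ and $r''_j$ were built in Section~\ref{sect:prop5.2}. Since $d_j$ is only multiplicative up to the truncation imposed by $c_j$ (and, when $j=1$, by the restriction on prime squares $\leq\log q$), I would strip this truncation by a Rankin's trick identical to the one used for $\mathcal{J}_1$: when $c_j(n)=0$ one has $\Omega(n)\geq 100a_\ast^2 K_j$, which together with $\sum_{p\in(P_{j-1},P_j]}1/p\ll 1$ (a consequence of \eqref{merten}) and the largeness of $K_j$ produces an admissible error of size $O(e^{-50a_\ast^2 K_j})$. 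This turns the sum into an Euler product
\[
\sum_n \frac{|d_j(n)|^2}{n} = \prod_{p\in(P_{j-1},P_j]}\Big( 1 + \frac{|d_j(p)|^2}{p} + \sum_{l\geq 2}\frac{|d_j(p^l)|^2}{p^l}\Big) + O(e^{-50a_\ast^2 K_j}),
\]
and Lemma~\ref{lem:prodCoefPatrol} (which gives $d_j(p^l)\ll C_0^l$, with super-exponential decay at the small primes occurring when $j=1$) shows that the tail $\sum_{l\geq2}|d_j(p^l)|^2 p^{-l}$ is $O(1/p^2)$.

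It remains to identify the local factor. By $f_x(p;\beta)=g_x(p;\beta)=\beta a_x(p)$ from Proposition~\ref{prop:Njcoeffs}, picking the $p$-term from one of the three $\mathcal{N}$-factors attached to each $m$ shows that the coefficient of $\chi(p)p^{-1/2}$ in the triple product is
\[
d_j(p) = \sum_{m=1}^k \Big(\tfrac12 a_{P_{n-1}}(p) + \big(a_m-\tfrac12\big)a_{P_R}(p)\Big) p^{-it_m}.
\]
Since $1\leq j<n$ forces $p\leq P_{n-1}\leq P_R$, recalling $a_x(p)=p^{-1/\log x}(1-\log p/\log x)$ gives $a_{P_{n-1}}(p)=1+O(\log p/\log P_{n-1})$ and $a_{P_R}(p)=1+O(\log p/\log P_{n-1})$, so each bracket equals $a_m+O(\log p/\log P_{n-1})$, and therefore
\[
|d_j(p)|^2 = \Big|\sum_{m=1}^k a_m p^{-it_m}\Big|^2 + O\Big(\frac{\log p}{\log P_{n-1}}\Big) = \sum_{1\leq j,m\leq k}\frac{a_j a_m}{p^{i(t_j-t_m)}} + O\Big(\frac{\log p}{\log P_{n-1}}\Big).
\]
Inserting this, together with the $O(1/p^2)$ tail, into the Euler factor and multiplying over $p\in(P_{j-1},P_j]$ yields the asserted bound. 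The step I expect to be the main obstacle is, as in Section~\ref{sect:prop5.2}, controlling the failure of the $\mathcal{N}_{j,x}$ to be exactly multiplicative — tracking the $c_j$-truncation, justifying the Rankin's trick with the correct constants (for which one needs $K_j$ large, guaranteed by the choice of $\delta$), and checking that the factors $\tfrac12$ and $(a_m-1)$ recombine into precisely $a_m$ so that the main term is $|\sum_m a_m p^{-it_m}|^2$ and the secondary term is genuinely $O(\log p/(p\log P_{n-1}))$; the remainder is a routine adaptation of the arguments already given for $\mathcal{J}$ and $\mathcal{J}_1$.
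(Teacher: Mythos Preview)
Your proposal is correct and follows essentially the same approach as the paper: express the triple product as a Dirichlet series with coefficients given by a three-fold convolution of the $b_{j,\cdot,\bf{t},\cdot}$, apply orthogonality to reduce to the diagonal, replace the truncated coefficients by their multiplicative versions via Rankin's trick at cost $O(e^{-50a_\ast^2 K_j})$, and then compute the local factors using $a_x(p)=1+O(\log p/\log x)$ together with the bounds of Lemma~\ref{lem:nCoeffPatrol}. The only cosmetic difference is that the paper cites Lemma~\ref{lem:nCoeffPatrol} rather than Lemma~\ref{lem:prodCoefPatrol} for the tail $\sum_{r\geq 2}|a(p^r)|^2/p^r = O(1/p^2)$, but the underlying estimate is the same.
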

\begin{proof}
  We write $ \bf{\tfrac12}$ for the $m$-dimensional vector whose coordinates are all $1/2$ and 
\[
\label{triplecon}
\prod_{k = 1}^m\mathcal{N}_{j,P_{n-1}} (\tfrac{1}{2}+ it_m, \chi ; \tfrac{1}{2}) \mathcal{N}_{j,P_R}(\tfrac{1}{2} + it_m, \chi; \tfrac{1}{2}(a_m - 1))\mathcal{N}_{j,P_R}(\tfrac{1}{2} + it_m, \chi; \tfrac{1}{2}a_m)=\sum_n \frac {a(n)}{n^{1/2}},
\]
 where $a(n)$ is given by the triple convolution
\[
a(n) =: b_{j,P_{n-1}, \bf{t}, \bf{\tfrac 12}}\ast b_{j,P_R, \bf{t}, \tfrac{1}{2}(\bf{a} - 1)} \ast b_{j,P_R, \bf{t}, \tfrac{1}{2}\bf{a}} (n).
\]

Now Rankin's trick renders that, at a cost of an error of size $O(e^{-50a_\ast^2 K_j})$, one can replace $a(n)$ in \eqref{triplecon} with the multiplicative coefficient $a'(n)$ given by
\[
a'(n) =: b'_{j,P_{n-1}, \bf{t}, \bf{\tfrac12}}\ast b'_{j,P_R, \bf{t}, \tfrac{1}{2}(\bf{a} - 1)} \ast b'_{j,P_R, \bf{t}, \tfrac{1}{2}\bf{a}} (n).
\]

By Lemma \ref{lem:nCoeffPatrol}, one may then show that
\[
|a'(p)|^2 = \sum_{1\leq j , m \leq k} \frac{a_j a_m}{p^{i(t_j - t_m)}}  + O  \left(\frac{\log p}{\log P_{n-1}}\right), \quad
\sum_{r \geq 2} \frac{|a(p^r)|^2}{p^r} =O( \frac{1}{p^2}).
\]
The proposition follows from multiplicativity and the orthogonality relation \eqref{orthrel}.
\end{proof}

\begin{prop}
\label{prop:RHMVDP1}
We have, for $1 < n \leq R$, 
\begin{align}
\begin{split}
\label{sumn}
\sum_{\chi \in X_q} |\mathcal{P}_{n,P_s}(\tfrac{1}{2} + i t_{\ell}, \chi)/K_{n}|^{2 \lceil 1/ 20 c_{n}\rceil} & \prod_{m = 1}^k |\mathcal{N}_{n,P_R}(\tfrac{1}{2} + it_m, \chi; \tfrac{1}{2}(a_m - 1))\mathcal{N}_{n,P_R}(\tfrac{1}{2} + it_m, \chi; \tfrac{1}{2}a_m)|^2 \\
\ll \varphi(q) e^{-\log(1/c_n)/40c_n} .
\end{split}
\end{align}
\end{prop}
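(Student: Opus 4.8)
The plan is to use the high power $2\ell$, where $\ell := \lceil 1/(20c_n)\rceil$, to extract the saving $e^{-\log(1/c_n)/40c_n}$, exploiting that $\mathcal{P}_{n,P_s}(\tfrac12+it_\ell,\chi)$ and every factor $\mathcal{N}_{n,P_R}(\tfrac12+it_m,\chi;\beta)$ is a Dirichlet polynomial whose coefficients are supported on integers all of whose prime factors lie in $(P_{n-1},P_n]$. Set
\[
G(\chi):=\mathcal{P}_{n,P_s}(\tfrac12+it_\ell,\chi)^{\ell}\prod_{m=1}^{k}\mathcal{N}_{n,P_R}(\tfrac12+it_m,\chi;\tfrac12(a_m-1))\,\mathcal{N}_{n,P_R}(\tfrac12+it_m,\chi;\tfrac12 a_m)=\sum_{N}\frac{\gamma(N)\chi(N)}{N^{1/2}},
\]
so that the left side of \eqref{sumn} equals $K_n^{-2\ell}\sum_{\chi\in X_q}|G(\chi)|^2$. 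By Proposition \ref{prop:Njcoeffs} each $\mathcal{N}_{n,P_R}$-factor has length at most $P_n^{100a_\ast^2K_n}$, while $\mathcal{P}_{n,P_s}(\tfrac12+it_\ell,\chi)^{\ell}$ has length at most $P_n^{\ell}$, so $\gamma$ is supported on $N\le P_n^{\ell+200ka_\ast^2K_n}$. Since $c_n\ell\le\tfrac1{20}+c_n$ and $c_nK_n=c_n^{1/4}$, the exponent $c_n(\ell+200ka_\ast^2K_n)$ is $<1$ once $\delta$ is chosen small enough in terms of ${\bf a}$ and $k$; hence $N<q$ throughout the support of $\gamma$, and the orthogonality relation \eqref{orthrel} (together with $\chi(N)=0$ for $(N,q)>1$) gives $\sum_{\chi\in X_q}|G(\chi)|^2=\varphi(q)\sum_N|\gamma(N)|^2/N$.

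Next I would write $\gamma=\alpha\ast\beta$, where $\alpha$ is the coefficient sequence of $\mathcal{P}_{n,P_s}(\tfrac12+it_\ell,\chi)^{\ell}$ and $\beta$ that of $\prod_{m}\mathcal{N}_{n,P_R}(\cdots)\,\mathcal{N}_{n,P_R}(\cdots)$, and apply Cauchy--Schwarz inside the divisor sum, using $d(HG)\le d(H)d(G)$, to get
\[
\sum_N\frac{|\gamma(N)|^2}{N}\le\Bigl(\sum_H\frac{d(H)|\alpha(H)|^2}{H}\Bigr)\Bigl(\sum_G\frac{d(G)|\beta(G)|^2}{G}\Bigr).
\]
For the first factor, $H$ runs over products of exactly $\ell$ primes in $(P_{n-1},P_n]$, so $d(H)\le 2^{\ell}$; expanding $|\alpha(H)|^2$ by the multinomial theorem, bounding a squared multinomial coefficient by $\ell!$ times the coefficient, and using that the coefficients of $\mathcal{P}_{n,P_s}$ have modulus $\le 1$, one obtains $\sum_H|\alpha(H)|^2/H\le\ell!\bigl(\sum_{p\in(P_{n-1},P_n]}1/p\bigr)^{\ell}\le 2^{\ell}\ell!$ by \eqref{merten} (note $\sum_{p\in(P_{n-1},P_n]}1/p=1+o(1)$), hence $\sum_H d(H)|\alpha(H)|^2/H\le 4^{\ell}\ell!$. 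For the second factor, $|\beta|$ is dominated by a non-negative multiplicative function given by the $2k$-fold convolution of the $b''$-type sequences from Proposition \ref{prop:Njcoeffs}; since $n\ge 2$, Lemma \ref{lem:nCoeffPatrol} (and its analogues for the vectors $\tfrac12({\bf a}-{\bf 1})$ and $\tfrac12{\bf a}$) gives $|\beta(p)|\ll_{k,{\bf a}}1$ and $|\beta(p^r)|\le(2k^2a_\ast)^r/r!$, so the local factor at $p$ is $1+O_{k,{\bf a}}(1/p)$ and the whole sum is $\ll_{k,{\bf a}}\exp\bigl(O_{k,{\bf a}}\bigl(\sum_{p\in(P_{n-1},P_n]}1/p\bigr)\bigr)\ll_{k,{\bf a}}1$ by \eqref{merten}.

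Combining these, the left side of \eqref{sumn} is $\ll_{k,{\bf a}}\varphi(q)K_n^{-2\ell}4^{\ell}\ell!\le\varphi(q)(4\ell/K_n^2)^{\ell}=\varphi(q)(4\ell c_n^{3/2})^{\ell}$, since $K_n^2=c_n^{-3/2}$. As $\ell\le 1/(19c_n)$ for $q$ large, $4\ell c_n^{3/2}\le\tfrac{4}{19}c_n^{1/2}\le c_n^{1/2}$, so this is at most $\varphi(q)c_n^{\ell/2}=\varphi(q)e^{-(\ell/2)\log(1/c_n)}\le\varphi(q)e^{-\log(1/c_n)/(40c_n)}$ by $\ell\ge 1/(20c_n)$, which is the desired bound. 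The delicate point is exactly this final balancing: the saving $K_n^{-2\ell}=c_n^{3\ell/2}$ must overcome the factorial $\ell!\approx(\ell/e)^{\ell}$ coming from the $2\ell$-th moment of the prime polynomial, and it does so precisely because $\ell$ has size $1/(20c_n)$, so that $\ell c_n^{3/2}$ is a genuinely small power of $c_n$; one must also fix $\delta$ (in terms of ${\bf a}$ and $k$) at the start so that every Dirichlet polynomial in play has length $<q$ and \eqref{orthrel} picks out only the diagonal, and verify that the $\mathcal{N}_{n,P_R}$-factors jointly contribute only an absolute constant rather than a power of $\log q$.
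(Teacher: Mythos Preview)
Your proof is correct and follows essentially the same strategy as the paper: separate the high power $|\mathcal{P}_{n,P_s}|^{2\ell}$ from the $\mathcal{N}_{n,P_R}$-factors, use orthogonality to reduce to diagonal coefficient sums, estimate the $\mathcal{P}$-part via the standard $\ell!\bigl(\sum_{p\in(P_{n-1},P_n]}1/p\bigr)^{\ell}$ bound for high moments of prime sums, and bound the $\mathcal{N}$-part by $O_{k,{\bf a}}(1)$ via multiplicativity and Mertens. The only cosmetic difference is that the paper applies Cauchy--Schwarz at the level of the $\chi$-sum (then invokes \eqref{powerest} and the argument of Proposition~\ref{prop:RHMVDP}), whereas you first apply orthogonality and then Cauchy--Schwarz on the Dirichlet coefficients of the convolution $\gamma=\alpha\ast\beta$; the resulting numerics are identical.
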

\begin{proof}
Cauchy's inequality renders that the left-hand side of \eqref{sumn} is 
\begin{align}
\begin{split}
\label{sumnestimation}
\leq \left(\sum_{\chi \in X_q} |\mathcal{P}_{n,P_s}(\tfrac{1}{2} + i t_{\ell}, \chi)/K_{n}|^{2 \lceil 1/ 20 c_{n}\rceil}  \right)^{1/2} \left(\sum_{\chi \in X_q}  \prod_{m = 1}^k |\mathcal{N}_{P_R}(\tfrac{1}{2} + it_m, \chi; \tfrac{1}{2}(a_m - 1))\mathcal{N}_{P_R}(\tfrac{1}{2} + it_m, \chi; \tfrac{1}{2}a_m)|^2 \right)^{1/2}.
\end{split}
\end{align}
 Similar to the proof of \cite[Lemma 3]{Sound2009}, we can show that for any $x \in \rear$ with $2 \leq x \leq q$ and $k\in \intz$ satisfying $x^k \leq q/\log q$, and any complex numbers $a(p)$, 
\begin{align}
\begin{split}
\label{powerest}
\sum_{\chi \in X_q} |\sum_{p \leq x}\frac {a(p)\chi(p)}{p^{1/2}}|^{2k} & \ll \varphi(q)k! \Big( \sum_{p \leq x}\frac {|a(p)|^2}{p} \Big)^k.
\end{split}
\end{align}
Using the above and arguing as in the proof of \cite[Proposition 4.2]{Curran}, the first sum in \eqref{sumnestimation} is
\begin{align}
\begin{split}
\label{firstsumest}
 \ll \varphi(q) e^{-\log(1/c_n)/20c_n}.
\end{split}
\end{align}  
We estimate the second sum in \eqref{sumnestimation} by arguing in a fashion similar to the proof of Proposition \ref{prop:RHMVDP} while applying \eqref{1xbound}.  The sum in question is thus 
\begin{align}
\begin{split}
\label{secondsumest}
 \ll \varphi(q)\prod_{p\in {(P_{n-1},P_n]}} \left(1 + O \left(\frac{1}{p}\right)\right) \ll \exp\left(\sum_{p\in {(P_{n-1},P_n]}} O \left(\frac{1}{p}\right)\right) \ll \varphi(q), 
\end{split}
\end{align}
 where the last majorant above follows from Lemma \ref{RS} by noting that $\log P_n/\log P_{n-1} = e$ for $n > 1$. We deduce readily \eqref{sumn} from \eqref{sumnestimation}--\eqref{secondsumest}, completing the proof of the proposition.
\end{proof}

  We shall omit the proof of the next proposition as it is similar to the proof of Proposition \ref{prop:RHMVDP}.
\begin{prop}
\label{prop:RHMVDP2}
We have, for $n< j \leq R$,
\begin{align*}
\sum_{\chi \in X_q} \prod_{m = 1}^k |\mathcal{N}_{j,P_R}&(\tfrac{1}{2} + it_m, \chi; \tfrac{1}{2}(a_m - 1))\mathcal{N}_{j,P_R}(\tfrac{1}{2} + it_m, \chi; \tfrac{1}{2}a_m)|^2  \leq \varphi(q) \prod_{p \in (P_{j-1},P_j]}  \left(1 + O \left(\frac{1}{p}\right) \right) + O(e^{-50a_\ast^2 K_j}).
\end{align*}
\end{prop}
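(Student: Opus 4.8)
The plan is to follow the proof of Proposition~\ref{prop:RHMVDP}; the simplification here is that we need only the crude local factor $1+O(1/p)$, so no main term has to be extracted. Since $n<j$ forces $j\geq 2$, Proposition~\ref{prop:Njcoeffs} applies and we may write
\[
\prod_{1\leq m\leq k}\mathcal{N}_{j,P_R}(\tfrac12+it_m,\chi;\tfrac12(a_m-1))\mathcal{N}_{j,P_R}(\tfrac12+it_m,\chi;\tfrac12 a_m)=\sum_{h}\frac{c(h)\chi(h)}{h^{1/2}},
\]
where $c(h)=\bigl(b_{j,P_R,{\bf t},\tfrac12({\bf a}-{\bf 1})}\ast b_{j,P_R,{\bf t},\tfrac12{\bf a}}\bigr)(h)$ is supported on $h$ with $p\mid h\Rightarrow p\in(P_{j-1},P_j]$; in particular $(h,q)=1$ and, estimating the length exactly as in Section~\ref{sect:J}, $h\leq q^{O(1/10^M)}<q$. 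Taking the squared modulus, summing over $\chi\in X_q$, and invoking the orthogonality relation~\eqref{orthrel}, the double sum over the pair $(h_1,h_2)$ collapses to the diagonal $h_1=h_2$ (since $h_1\equiv h_2\pmod q$ with $h_1,h_2<q$ coprime to $q$ forces $h_1=h_2$), whence
\[
\sum_{\chi\in X_q}\prod_{m=1}^k\bigl|\mathcal{N}_{j,P_R}(\tfrac12+it_m,\chi;\tfrac12(a_m-1))\mathcal{N}_{j,P_R}(\tfrac12+it_m,\chi;\tfrac12 a_m)\bigr|^2=\varphi(q)\sum_{h}\frac{|c(h)|^2}{h}.
\]

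Next, as in the proof of Proposition~\ref{prop:J1Lower}, I would replace $c$ by its multiplicative surrogate $c'(h)=\bigl(b'_{j,P_R,{\bf t},\tfrac12({\bf a}-{\bf 1})}\ast b'_{j,P_R,{\bf t},\tfrac12{\bf a}}\bigr)(h)$ via Rankin's trick: $c(h)\neq c'(h)$ forces $\Omega(h)\geq 100a_\ast^2 K_j$, and both $|c(h)|$ and $|c'(h)|$ are dominated by the non-negative multiplicative $c''(h):=\bigl(b''_{j,P_R,{\bf t},\tfrac12({\bf a}-{\bf 1})}\ast b''_{j,P_R,{\bf t},\tfrac12{\bf a}}\bigr)(h)$, so the replacement costs an error
\[
\ll e^{-100a_\ast^2 K_j}\prod_{p\in(P_{j-1},P_j]}\Bigl(1+\frac{e\,c''(p)^2}{p}+\sum_{r\geq 2}\frac{e^r c''(p^r)^2}{p^r}\Bigr)\ll e^{-50a_\ast^2 K_j},
\]
using $c''(p)\leq 2a_\ast$ and $c''(p^r)\leq(2a_\ast k)^r/r!$ (both following from Lemma~\ref{lem:nCoeffPatrol} for $j\geq 2$), the fact that the primes in $(P_{j-1},P_j]$ are large so the $r\geq 2$ tail is $O(1/p^2)$, and $\sum_{p\in(P_{j-1},P_j]}1/p=O(1)$ from Lemma~\ref{RS}. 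Since $|c'(h)|^2$ is then multiplicative,
\[
\sum_{h}\frac{|c'(h)|^2}{h}=\prod_{p\in(P_{j-1},P_j]}\Bigl(1+\frac{|c'(p)|^2}{p}+\sum_{r\geq 2}\frac{|c'(p^r)|^2}{p^r}\Bigr)=\prod_{p\in(P_{j-1},P_j]}\Bigl(1+O\bigl(\tfrac1p\bigr)\Bigr),
\]
because $c'(p)=b'_{j,P_R,{\bf t},\tfrac12({\bf a}-{\bf 1})}(p)+b'_{j,P_R,{\bf t},\tfrac12{\bf a}}(p)=a_{P_R}(p)\sum_{m=1}^k(a_m-\tfrac12)p^{-it_m}$ by Lemma~\ref{lem:nCoeffPatrol}, so $|c'(p)|^2=O(1)$ (as $|a_{P_R}(p)|\leq 1$), while $|c'(p^r)|\leq c''(p^r)$ makes the higher prime powers contribute $O(1/p^2)$. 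Assembling the last three displays yields the stated bound.

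I do not anticipate a genuine obstacle: this is the softest of the three auxiliary estimates precisely because the main term is simply absorbed into $O(1/p)$, and every ingredient — the orthogonality step, the length bound, Rankin's trick, and the coefficient bounds of Lemma~\ref{lem:nCoeffPatrol} — is reused essentially verbatim from Sections~\ref{sect:J} and~\ref{sect:prop5.2}. The only points that genuinely require care are bookkeeping ones: verifying that the length of the Dirichlet polynomial is $<q$ so that the diagonal extraction is legitimate, and that the implied constants in $c''(p^r)\leq(2a_\ast k)^r/r!$ and in the $r\geq 2$ tails are uniform in $j$ — both being inherited from the earlier analysis.
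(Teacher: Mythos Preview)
Your proposal is correct and follows exactly the approach the paper intends: the paper explicitly omits this proof, stating only that it ``is similar to the proof of Proposition~\ref{prop:RHMVDP},'' and your argument is precisely that specialization (two-fold instead of three-fold convolution, no extraction of the main term since only $1+O(1/p)$ is needed). All the ingredients you invoke --- orthogonality and the length bound from Section~\ref{sect:J}, Rankin's trick and the coefficient bounds of Lemma~\ref{lem:nCoeffPatrol} as in Section~\ref{sect:prop5.2} --- are used in the same way.
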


  We now apply Propositions \ref{prop:RHMVDP}--\ref{prop:RHMVDP2} to estimate the sums in \eqref{sumBdecomp}. Together with \eqref{sumB}, we see that
\begin{align}
\begin{split}
\label{Bsumest}
&\sum_{\mathcal{B}_{\mathcal{A},n}}  \prod_{1 \leq m\leq k} |L(\tfrac{1}{2} + it_m, \chi)| |\mathcal{N}_{P_R}(\tfrac{1}{2} + it_m, \chi; \tfrac{1}{2}(a_m - 1))\mathcal{N}_{P_R}(\tfrac{1}{2} + it_m, \chi; \tfrac{1}{2}a_m)|^2  \\
\ll & \varphi(q) \prod_{1 \leq j < n} \left(\prod_{p \in (P_{j-1},P_j]}  \left(1 + \frac{1}{p} \sum_{1\leq j , m \leq k} \frac{a_j a_m}{p^{i(t_j - t_m)}}  + O  \left(\frac{\log p}{p \log P_{n-1}} + \frac{1}{p^2}\right) \right) + O(e^{-50a_\ast^2 K_j}) \right)\\
&\qquad\qquad\times e^{A/c_{n-1} - \log(1/c_n)/40c_n} \prod_{n < j \leq R} \left(\prod_{p \in (P_{j-1},P_j]}  \left(1 + O \left(\frac{1}{p}\right) \right) + O(e^{-50a_\ast^2 K_j})\right) \\
= & \varphi(q) \prod_{1 \leq j < n} \left(\prod_{p \in (P_{j-1},P_j]}  \left(1 + \frac{1}{p} \Big|\sum_{1 \leq  m \leq k} a_m p^{-it_m}\Big|^2  + O  \left(\frac{\log p}{p \log P_{n-1}} + \frac{1}{p^2}\right) \right) + O(e^{-50a_\ast^2 K_j}) \right)   \\
&\qquad\qquad\times e^{A/c_{n-1} - \log(1/c_n)/40c_n} \prod_{n < j \leq R} \left(\prod_{p \in (P_{j-1},P_j]}  \left(1 + O \left(\frac{1}{p}\right) \right) + O(e^{-50a_\ast^2 K_j})\right) \\
\ll & \varphi(q)e^{A/c_{n-1} - \log(1/c_n)/40c_n} \prod_{1 \leq j \leq R}(1+O(e^{-50a_\ast^2 K_j}))\prod_{p \leq P_{n-1}}\Big(1+O  \Big(\frac{\log p}{p \log P_{n-1}} + \frac{1}{p^2} \Big) \Big)  
\prod_{P_{n} \leq p \leq P_{R}}\Big(1+O  \Big(\frac{1}{p} \Big) \Big) \\
& \qquad\qquad\times  \prod_{1 \leq j < n} \left(\prod_{p \in (P_{j-1},P_j]}  \left(1 + \frac{1}{p} \Big|\sum_{1 \leq  m \leq k} a_m p^{-it_m}\Big|^2 \right)\right) \\
\ll & \varphi(q)e^{A/c_{n-1} - \log(1/c_n)/40c_n} \exp \Big(\sum_{p \leq P_{n-1}}O  \Big(\frac{\log p}{p \log P_{n-1}} + \frac{1}{p^2}\Big)+ 
\sum_{P_{n} \leq p \leq P_{R}}O  \Big(\frac{1}{p}\Big) \Big) \\
& \qquad\qquad\times  \prod_{1 \leq j < n} \left(\prod_{p \in (P_{j-1},P_j]}  \left(1 + \frac{1}{p} \Big|\sum_{1 \leq  m \leq k} a_m p^{-it_m}\Big|^2 \right)\right).
\end{split}
\end{align}
The last bound above follows from the observation $\prod_{1 \leq j\leq R} (1 +O(e^{-50a_\ast^2 K_j}))^2 = O(1)$ and from the bound $1 + x \leq e^x$ for all real $x > 0$.  Now Lemma \ref{RS} gives that
\begin{align}
\label{expest}
 \exp \Big(\sum_{p \leq P_{n-1}}O \Big(\frac{\log p}{p \log P_{n-1}} + \frac{1}{p^2} \Big) \Big) \ll 1 \quad \mbox{and} \quad \sum_{P_{n} \leq p \leq P_{R}}O \Big(\frac{1}{p} \Big) \ll  R-n.
\end{align}  
  
   It follows from \eqref{Bsumest} and \eqref{expest} that
\begin{align}
\begin{split}
\label{Bsumestsimplified}
\sum_{\mathcal{B}_{\mathcal{A},n}}  \prod_{1 \leq m\leq k} & |L(\tfrac{1}{2} + it_m, \chi)| |\mathcal{N}_{P_R}(\tfrac{1}{2} + it_m, \chi; \tfrac{1}{2}(a_m - 1))\mathcal{N}_{P_R}(\tfrac{1}{2} + it_m, \chi; \tfrac{1}{2}a_m)|^2  \\
\ll & \varphi(q) \exp \left(A/c_{n-1} - \log(1/c_n)/40c_n + O\left(R - n \right)\right) \prod_{p\leq P_{n-1}} \left(1 + \frac{1}{p} \sum_{1\leq j , m \leq k} \frac{a_j a_m}{p^{i(t_j - t_m)}}\right)\\
\ll &  \varphi(q) \exp \left(A/c_{n-1} - \log(1/c_n)/40c_n + O\left( R - n\right) \right) \prod_{p\leq P_R} \left(1 + \frac{1}{p} \sum_{1\leq j , m \leq k} \frac{a_j a_m}{p^{i(t_j - t_m)}}\right),
\end{split}
\end{align}
 where the last estimation above follows from using \eqref{expest} again. \newline
 
 The above discussions conclude our treatments for the case $n>1$. Now, for the case $n = 1$, we have the following result. 
\begin{prop}\label{prop:badSetInt}
Assuming the truth of GRH, for any $\mathcal{A} \subsetneq [m]$, and any constant $C>0$, we have
\begin{align}
\label{n1bound}
\sum_{\mathcal{B}_{\mathcal{A},1}} \prod_{1 \leq  m\leq k} L(\tfrac{1}{2} + it_m, \chi) \mathcal{N}_{P_R}(\tfrac{1}{2} + it_m, \chi; \tfrac{1}{2}(a_m - 1))^2 \overline{\mathcal{N}_{P_R}(\tfrac{1}{2} + it_m, \chi; \tfrac{1}{2}a_m)}^2 \ll_{C} \varphi(q)(\log q)^{-C}.
\end{align}
\end{prop}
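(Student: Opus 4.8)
The plan is to exploit that $\mathcal{B}_{\mathcal{A},1}$ is an extremely thin set. Writing $\mathcal{A}=[k]\setminus[a]$ with $a\ge1$, the definitions give that every $\chi\in\mathcal{B}_{\mathcal{A},1}$ satisfies $|\mathcal{P}_{1,P_s}(\tfrac12+it_\ell,\chi)|>K_1$ for some $\ell\in[a]$ and some $1\le s\le R$. Since $K_1=c_1^{-3/4}$ is of order $(\log\log q)^{3/2}$ while $\mathcal{P}_{1,P_s}$ is supported on primes up to $P_1=q^{c_1}$ with $c_1=e/(\log\log q)^2$, a high even moment of $\mathcal{P}_{1,P_s}$ will force the contribution of $\mathcal{B}_{\mathcal{A},1}$ to be $\ll\varphi(q)\exp(-c(\log\log q)^2)$, which beats $\varphi(q)(\log q)^{-C}$ for every fixed $C$. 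First I would peel off the $L$-functions by Cauchy--Schwarz: abbreviating $\mathcal{M}_m:=\mathcal{N}_{P_R}(\tfrac12+it_m,\chi;\tfrac12(a_m-1))^2\overline{\mathcal{N}_{P_R}(\tfrac12+it_m,\chi;\tfrac12 a_m)}^2$, the left side of \eqref{n1bound} is
\[
\le\Big(\sum_{\chi\in X_q^*}\prod_{1\le m\le k}|L(\tfrac12+it_m,\chi)|^2\Big)^{1/2}\Big(\sum_{\chi\in\mathcal{B}_{\mathcal{A},1}}\prod_{1\le m\le k}|\mathcal{M}_m|^2\Big)^{1/2}=M_{{\bf t},{\bf 1}}(q)^{1/2}\Big(\sum_{\chi\in\mathcal{B}_{\mathcal{A},1}}\prod_{1\le m\le k}|\mathcal{M}_m|^2\Big)^{1/2},
\]
and $M_{{\bf t},{\bf 1}}(q)\ll_{\bf t}\varphi(q)(\log q)^{O(1)}$ by Szab\'o's bound \eqref{eqn:shiftedMoments} (with ${\bf a}={\bf 1}$) together with $|\zeta(1+iu+1/\log q)|\ll\log q$ for $|u|\le2q^A$.

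For the remaining factor, note that the prime-square part of $\mathcal{P}_{1,P_s}$ has modulus $O(\log\log\log q)=o(K_1)$, so on $\mathcal{B}_{\mathcal{A},1}$ the prime part $\widetilde{\mathcal{P}}_{1,P_s}(\tfrac12+it_\ell,\chi):=\sum_{p\le P_1}\chi(p)p^{-1/2-1/\log P_s}\,\tfrac{\log(P_s/p)}{\log P_s}\,p^{-it_\ell}$ still has modulus $>K_1/2$ for suitable $\ell\in[a]$, $s\in[R]$. Let $V=\lfloor(\log\log q)^2/C_0\rfloor$ for a large absolute constant $C_0$. Inserting the factor $|\widetilde{\mathcal{P}}_{1,P_s}(\tfrac12+it_\ell,\chi)/(K_1/2)|^{2V}\ge1$, summing over the $\le kR\ll\log\log\log q$ pairs $(\ell,s)$, extending the $\chi$-sum to all of $X_q$, and applying Cauchy--Schwarz again, one bounds $\sum_{\chi\in\mathcal{B}_{\mathcal{A},1}}\prod_m|\mathcal{M}_m|^2$ by a sum of terms each
\[
\ll(K_1/2)^{-2V}\Big(\sum_{\chi\in X_q}|\widetilde{\mathcal{P}}_{1,P_s}(\tfrac12+it_\ell,\chi)|^{4V}\Big)^{1/2}\Big(\sum_{\chi\in X_q}\prod_{1\le m\le k}|\mathcal{N}_{P_R}(\tfrac12+it_m,\chi;\tfrac12(a_m-1))|^8|\mathcal{N}_{P_R}(\tfrac12+it_m,\chi;\tfrac12 a_m)|^8\Big)^{1/2}.
\]
Provided $C_0>2e$ we have $2V\log P_1=(2e/C_0)\log q\le\log q-\log\log q$ for large $q$, so \eqref{powerest} applies to the first sum, and with $\sum_{p\le P_1}p^{-1}\le\log\log q+O(1)$ it is $\ll\varphi(q)(2V)!\,(\log\log q+O(1))^{2V}$. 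The second sum, after expanding the product into a Dirichlet series of length a small enough power of $q$ (the length computation of Section \ref{sect:J}), is evaluated by the orthogonality relation \eqref{orthrel}, multiplicativity and Lemma \ref{RS}, and is $\ll\varphi(q)\prod_{p\le P_R}(1+O(1/p))\ll\varphi(q)(\log q)^{O(1)}$.

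Combining these, using $K_1^2=c_1^{-3/2}=e^{-3/2}(\log\log q)^3$ and Stirling's formula $((2V)!)^{1/2}\ll(2V/e)^V(2V)^{O(1)}$, one gets
\[
\sum_{\chi\in\mathcal{B}_{\mathcal{A},1}}\prod_{1\le m\le k}|\mathcal{M}_m|^2\ll\varphi(q)(\log q)^{O(1)}\Big(\frac{8V\,e^{1/2}}{(\log\log q)^2}\Big)^{V}(2V)^{O(1)}.
\]
Choosing $C_0>\max(8e^{1/2},2e)$ makes the base at most $8e^{1/2}/C_0<1$, so the right side is $\ll\varphi(q)(\log q)^{O(1)}\exp(-cV)=\varphi(q)(\log q)^{O(1)}\exp(-c(\log\log q)^2)$. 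Hence the left side of \eqref{n1bound} is $\ll M_{{\bf t},{\bf 1}}(q)^{1/2}\big(\varphi(q)(\log q)^{O(1)}\exp(-c(\log\log q)^2)\big)^{1/2}\ll\varphi(q)(\log q)^{O(1)}\exp(-\tfrac c2(\log\log q)^2)$, which is $\ll_C\varphi(q)(\log q)^{-C}$ for every fixed $C$, since $\exp(-\tfrac c2(\log\log q)^2)$ decays faster than any fixed power of $\log q$.

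The main obstacle is exactly the bookkeeping just sketched: $V$ must be of precise order $(\log\log q)^2$ — large enough that $\exp(-cV)$ absorbs any power of $\log q$, yet small enough that $P_1^{2V}\le q/\log q$ so that \eqref{powerest} is available — and one must verify that the gain $(K_1/2)^{-2V}$ coming from $K_1=c_1^{-3/4}$ with $c_1=e/(\log\log q)^2$ genuinely dominates the combinatorial loss $((2V)!)^{1/2}(\log\log q)^V$; this is precisely where the exponent $3/4$ in $K_j=c_j^{-3/4}$ and the size $c_j\asymp(\log\log q)^{-2}$ are used. One must also check that every auxiliary Dirichlet polynomial ($\widetilde{\mathcal{P}}_{1,P_s}^{\,2V}$ and the eighth powers of the $\mathcal{N}_{P_R}$'s) has length a small power of $q$, so that \eqref{orthrel} yields no off-diagonal contribution — the same length estimate already used in Section \ref{sect:J}, with $\delta$ (hence $M$) taken correspondingly small.
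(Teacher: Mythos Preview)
Your proof is correct and follows essentially the same strategy as the paper's: bound the $L$-functions and the $\mathcal{N}$-polynomials crudely by $\varphi(q)(\log q)^{O(1)}$ under GRH, and extract an arbitrarily large power saving from the thinness of $\mathcal{B}_{\mathcal{A},1}$ via a high even moment of $\mathcal{P}_{1,P_s}$ using \eqref{powerest}.

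The only differences are cosmetic. The paper applies H\"older once with exponents $(3,3,3)$, separating the measure of $\mathcal{B}_{\mathcal{A},1}$ as a stand-alone factor and citing \cite[Lemma~2.7]{Curran} for the bound $\text{meas }\mathcal{B}_{\mathcal{A},1}\ll\varphi(q)(\log q)^{-C'}$; you instead apply Cauchy--Schwarz twice and carry out the high-moment computation explicitly inside the $\mathcal{N}$-sum. For the $L$-function factor the paper invokes Soundararajan's moment bounds \cite{Sound2009} for $\sum\prod|L|^3$, while you use Szab\'o's bound \eqref{eqn:shiftedMoments} for $\sum\prod|L|^2$; both give $\varphi(q)(\log q)^{O(1)}$ under GRH. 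Your version has the mild advantage of being self-contained (the high-moment saving is worked out rather than quoted), at the cost of needing $\delta$ slightly smaller so that the eighth powers of the $\mathcal{N}_{P_R}$'s still have length a small power of $q$ --- a point you correctly flag.
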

\begin{proof}
Hölder's inequality yields that the left-hand side of \eqref{n1bound} is at most
\[
(\text{meas } \mathcal{B}_{\mathcal{A},1})^{1/3} \left(\sum_{\chi \in X_q} \prod_{1 \leq m\leq k} |L(\tfrac{1}{2} + it_m, \chi)|^3 \right)^{1/3}
\times \left(\sum_{\chi \in X_q} |\mathcal{N}_{P_R}(\tfrac{1}{2} + it_m, \chi; \tfrac{1}{2}(a_m - 1)) \mathcal{N}_{P_R}(\tfrac{1}{2} + it_m, \chi; \tfrac{1}{2}a_m) |^6    \right)^{1/3}.
\]
 It follows from the arguments in \cite{Sound2009} that the first sum above is $\ll \varphi(q) (\log q)^{O(1)}$.  Following the proof of Proposition \ref{prop:RHMVDP}, the second sum above is also $\ll \varphi(q) (\log q)^{O(1)}$. We further apply \eqref{powerest} and proceed as in the proof of \cite[lemma 2.7]{Curran} to see that $\text{meas } \mathcal{B}_{\mathcal{A},1} \ll \varphi(q) (\log q)^{-C'}$ for any constant $C'>0$. These estimations now imply the desired bound given in \eqref{n1bound}. This completes the proof of the proposition. 
\end{proof}

  We now deduce from \eqref{Bsumestsimplified} and Proposition \ref{prop:J1Lower} that for $n > 1$,
\begin{align*}
\sum_{\mathcal{B}_{\mathcal{A},n}}  \prod_{1 \leq m\leq k} & |L(\tfrac{1}{2} + it_m, \chi)| |\mathcal{N}_{P_R}(\tfrac{1}{2} + it_m, \chi; \tfrac{1}{2}(a_m - 1))\mathcal{N}_{P_R}(\tfrac{1}{2} + it_m, \chi; \tfrac{1}{2}a_m)|^2 \\
\ll & \exp\left(A/c_{n-1} - \log(1/c_n)/40c_n + O\left(R- n\right)\right) |\mathcal{J}_1|.
\end{align*}
 Upon summing over all $\mathcal{A} \subsetneq [m]$ and applying Proposition \ref{prop:badSetInt}, we see, via the definition of the $c_n$, that for a large constant $C>0$, 
\begin{align}
\begin{split}
\label{eq:J2est}
|\mathcal{J}_2| \ll & |\mathcal{J}_1| \sum_{2\leq n \leq R}\exp\left(A/c_{n-1} - \log(1/c_n)/40c_n + O\left( R - n\right)\right) + O_C\left(\varphi(q)(\log q)^{-C}\right) \\
\ll & |\mathcal{J}_1| \sum_{2\leq n \leq R}\exp\left(e^{-n} (\log\log q)^2 (O(1) + \tfrac{1}{40} n - \tfrac{1}{20} \log\log\log q)\right) + O(R - n) ) + O_C\left(\varphi(q)(\log q)^{-C}\right) \\
\ll & |\mathcal{J}_1| \sum_{2\leq n \leq R}\exp\left(e^{-n} (\log\log q)^2 (O(1) + \tfrac{1}{40} n - \tfrac{1}{20} \log\log\log q)\right) + O(R - n) ),
\end{split}
\end{align}
 where the last estimation above follows from the observation that $|\mathcal{J}_1| \gg \varphi(q)$. \newline

  Note that the condition $P_R \leq q^{\delta}$ implies that 
\begin{align}
\begin{split}
\label{Rbound}
R \leq 2 \log\log\log q + \log \delta.
\end{split}
\end{align}
 It follows from this and \eqref{eq:J2est} that
\begin{align*}
|\mathcal{J}_2| \ll  |\mathcal{J}_1|\sum_{2\leq n \leq R} \exp\left(e^{-n} (\log\log q)^2 (O(1) + \tfrac{1}{40}\log \delta)\right) + O(R - n) ).
\end{align*}

  We then make the change of variable $R-n \to j$, take $\delta$ small enough (so that the term $O(1) + \tfrac{1}{40}\log \delta$ above is negative) and apply the crude bound $R \leq 2 \log\log\log q$ from \eqref{Rbound}.  This leads to
\begin{align}
\begin{split}
\label{J2bound10}
|\mathcal{J}_2| \ll &  |\mathcal{J}_1| \sum_{j \geq 0} \exp\left(e^j (O(1) + \tfrac{1}{40}\log \delta) + O(j) \right) 
\ll   |\mathcal{J}_1| \sum_{j \geq 0} \exp\left( (O(1) + \tfrac{1}{40}\log \delta)j + O(j) \right) \\
\ll & |\mathcal{J}_1|\sum_{j \geq 1} \exp\left((O(1) + \tfrac{1}{40}\log \delta ) j \right).
\end{split}
\end{align}
Again, taking $\delta$ sufficiently small to ensure the term $O(1) + \tfrac{1}{40}\log \delta$ above is negative, we deduce from \eqref{J2bound10} that
\begin{align*}
\begin{split}
|\mathcal{J}_2| \leq &  |\mathcal{J}_1|\exp\left(O(1) + \tfrac{1}{40}\log \delta  \right).
\end{split}
\end{align*}
If needed, we further reduce $\delta$ in terms of $\bf{a}$ to make the implicit constant above not exceed $1/2$.  This now completes the proof of Proposition \ref{prop:J2J1comp}.

\subsection{Proof of Proposition \ref{prop:RHtwisted}}

We apply Lemma \ref{lem:logZetaUpperBound} by setting $k=a_1=1, x = P_{R}$ there to see that for any $\chi \in X^*_q$ and $1 \leq m \leq k$, 
\begin{align*}
|L(\tfrac{1}{2} + it_m, \chi)| 
\ll &
\exp\Big(\Re \Big( \sum_{1 \leq j < n}  \mathcal{P}_{j,P_{R}} (\tfrac{1}{2}  + it_m, \chi) \Big) + \frac {A+1}{ c_{R}} \Big) \ll \exp\Big(\Re \Big( \sum_{1 \leq j < n}  \mathcal{P}_{j,P_{R}} (\tfrac{1}{2}  + it_m, \chi) \Big) \Big).
\end{align*}  

The above, together with the definition of $\mathcal{T}_k$ and Lemma \ref{lem:expTaylorSeries}, gives
\begin{align*}
\sum_{\chi \in \mathcal{T}_k}  \prod_{1 \leq m\leq k} & |L(\tfrac{1}{2} + it_m, \chi)| |\mathcal{N}_{P_R}(\tfrac{1}{2} + it_m, \chi; \tfrac{1}{2}(a_m - 1))\mathcal{N}_{P_R}(\tfrac{1}{2} + it_m, \chi; \tfrac{1}{2}a_m)|^2   \\
\ll &   \sum_{\chi \in \mathcal{T}_k}\prod_{m = 1}^k \prod_{j \leq R} \exp\left(\Re \mathcal{P}_{j,P_R} (\tfrac{1}{2}  + it_m, \chi) \right) \times |\mathcal{N}_{P_R}(\tfrac{1}{2} + it_m, \chi; \tfrac{1}{2}(a_m - 1))\mathcal{N}_{P_R}(\tfrac{1}{2} + it_m, \chi; \tfrac{1}{2}a_m)|^2  \\
\ll &
\sum_{\chi \in X_q} \prod_{m = 1}^k  |\mathcal{N}_{P_R} (\tfrac{1}{2}+ it_m, \chi ; \tfrac{1}{2}) |^2 \times |\mathcal{N}_{P_R}(\tfrac{1}{2} + it_m, \chi; \tfrac{1}{2}(a_m - 1))\mathcal{N}_{P_R}(\tfrac{1}{2} + it_m, \chi; \tfrac{1}{2}a_m)|^2  .
\end{align*}

Applying Proposition \ref{prop:RHMVDP} with $n = R + 1$ there and \eqref{orthorelprod} gives that the last expression above is
\[
\ll \varphi(q) \prod_{p \leq P_R} \left(1 + \sum_{1\leq j,m \leq k} \frac{a_j a_m}{p^{1+i(t_j - t_m)}}\right).
\]
This now completes the proof of Proposition \ref{prop:RHtwisted}.

\subsection{Conclusion}

 We deduce from lemma \ref{lem:incrementBound}, Propositions \ref{prop:I0Llower}-\ref{prop:RHtwisted} that by the definition of $K_J$,
\begin{align*}
\begin{split}
\sum_{J\leq R} \big|\mathcal{S}_0^{(J)} - \mathcal{S}_0^{(J-1)} \big| \ll & |S_0^{(R)}| \sum_{J\leq R} e^{-50a_\ast^2K_J}=|I_0^{(R)}| \sum_{J\leq R}\exp\left(-50a_\ast^2 \frac{(\log\log q)^{3/2}}{e^{3J/4}}\right) \\
\ll & |S_0^{(R)}| \sum_{J\leq 2 \log\log\log q + \log \delta}\exp\left(-50a_\ast^2 \frac{(\log\log q)^{3/2}}{e^{3J/4}}\right),
\end{split}
\end{align*}
 where the last line above follows from \eqref{Rbound}.  Changing of variable by setting $j=2 \log\log\log q + \log \delta-J$ gives that the last sum above is bounded above by
\[
\sum_{j\geq 1} \exp\left(-50 a_\ast^2 e^{-3\log\delta / 4}  e^{3j/4}\right) \leq \sum_{j\geq 1} \exp\left(-50a_\ast^2 e^{-3\log\delta / 4}    j \right) \ll \exp\left(-50a_\ast^2 e^{-3\log\delta / 4} \right) .
\]
We now choose $\delta$ sufficiently small in terms of ${\bf a}$, arriving at
\[
\sum_{J\leq R} \big|\mathcal{S}_0^{(J)} - \mathcal{S}_0^{(J-1)} \big| \leq \frac{|S_0^{(R)}|}{2}.
\]
 It now follows from \eqref{S0decomp}, Proposition \ref{prop:I0Llower} and the above that we have
\begin{align}
\begin{split}
\label{S0lowerbound}
 |S_0| \gg & \varphi(q) \prod_{p \leq P_R} \left(1 + \sum_{1\leq j,m \leq k} \frac{a_j a_m}{p^{1+i(t_j - t_m)}}\right)=\varphi(q) \prod_{p \leq P_R} \left(1 + \frac{|\sum_{m = 1}^k a_m p^{-it_m}|^2}{p} \right) \\
\gg & \varphi(q) \exp \left(\sum_{p \leq P_R} \frac{|\sum_{m = 1}^k a_m p^{-it_m}|^2}{p} \right), 
\end{split}
\end{align}
 where the last bound above follows by noting from \eqref{1xbound} that for $p >(10+a)^2=:K$ (recall that $a=\sum^k_{m=1}a_m$),
\begin{align*}
\begin{split}
\exp &\left( -\sum_{K<p \leq P_R} \frac{|\sum_{m = 1}^k a_m p^{-it_m}|^2}{p} \right)\prod_{K< p \leq P_R} \left(1 + \frac{|\sum_{m = 1}^k a_m p^{-it_m}|^2}{p} \right) \\
& \hspace*{1cm} \gg \prod_{K< p \leq P_R} \left(1-\frac{|\sum_{m = 1}^k a_m p^{-it_m}|^2}{p} \right) \left(1 + \frac{|\sum_{m = 1}^k a_m p^{-it_m}|^2}{p} \right) 
\gg \prod_{p} \left(1 -\frac{|\sum_{m = 1}^k a_m p^{-it_m}|^4}{p^2}\right)  \gg  1. 
\end{split}
\end{align*} 
 
   We further deduce from \eqref{S0lowerbound} that
\begin{align}
\begin{split}
\label{S0lowerboundzeta}
 |S_0| 
\gg & \varphi(q) \exp \left(\sum_{p \leq q} \frac{|\sum_{m = 1}^k a_m p^{-it_m}|^2}{p} \right) \gg    \varphi(q) (\log q)^{a_1^2 + \cdots + a_m^2} 
\prod_{1\leq j < l \leq k} |\zeta(1 + i(t_j - t_l) + 1/ \log q )|^{2a_j a_l}.
\end{split}
\end{align}
The last estimation above follows from \eqref{merten} and \eqref{mertenstype}, while the penultimate follows from the observation that by \eqref{merten}, 
\begin{align*}
\begin{split}
\sum_{P_R < p \leq q} \frac{|\sum_{m = 1}^k a_m p^{-it_m}|^2}{p}  \ll \sum_{P_R < p \leq q}O \Big( \frac 1{p} \Big) \ll 1. 
\end{split}
\end{align*} 

  We see readily from \eqref{S0lowerboundzeta} that the assertion of Proposition \ref{prop:I0Bound} holds. This completes its proof.

\section{Proof of Proposition \ref{prop:IkBound}}
 Recall that
\begin{align*}
\mathcal{S}_m = \sum_{\chi \in \mathcal{T}_m} |L(\tfrac{1}{2} + i t_m, \chi)|^{k} \prod_{1 \leq  j\leq R} &\exp\left(2(a_m-k)\Re \mathcal{P}_{j,P_R}(\tfrac{1}{2}+it_k, \chi) \right)  \prod_{\substack{1 \leq \ell \leq k \\ \ell \neq m}} \prod_{1 \leq j\leq R} \exp\left(2a_\ell\Re \mathcal{P}_{j,P_R}(\tfrac{1}{2}+ia_\ell, \chi)) \right) .
\end{align*}
 Similar to the proof of Proposition \ref{prop:RHtwisted}, we apply Lemma \ref{lem:logZetaUpperBound} by setting $k=a_1=1$, $x = P_{R}$ there together with Lemma \ref{lem:expTaylorSeries} to see that
\begin{align*}
\mathcal{S}_m \ll \sum_{\chi \in X_q} |\mathcal{N}
_{P_R}(\tfrac{1}{2} &+ it_m, \chi;k) \mathcal{N}_{P_R}(\tfrac{1}{2} + it_m, \chi;2(a_m - k))|^2 \times \prod_{\substack{1 \leq \ell \leq k \\ \ell \neq m}}  |\mathcal{N}_{P_R}(\tfrac{1}{2} + it_\ell, \chi;a_\ell)|^2  .
\end{align*}
 We further recall the definitions made preceding Proposition \ref{prop:Njcoeffs}.  For $j > 1$, the coefficients $a_j(n)$ of the Dirichlet polynomial
\[
\mathcal{N}_{j,P_R}(\tfrac{1}{2} + it_m, \chi;k) \mathcal{N}_{j,P_R}(\tfrac{1}{2} + it_m, \chi;2(a_m - k)) \prod_{\substack{1 \leq \ell \leq m \\ \ell \neq k}}  \mathcal{N}_{j,P_R}(\tfrac{1}{2} + it_\ell, \chi;a_\ell) =:\sum_n \frac{a_j(n)}{n^{1/2}}
\]
equals the $m + 1$ fold Dirichlet convolution of $g_{P_R}(n; k) n^{-i t_m} c_j(n), g_{P_R}(n; a_m -k ) n^{-i t_m} c_j(n)$ and $g_{P_R}(n; a_\ell) n^{-i t_\ell} c_j(n)$ for all $\ell \leq k, \ell \neq m$.
If $j = 1$, a similar relation holds with $f_{P_R}$ in place of $g_{P_R}$.
As before, one may replace $c_j(n)$ with ${\bf 1}_{p|n \Rightarrow p\in (P_{j-1},P_j]}$, incurring an error of size $O(e^{-50a_\ast^2K_j})$, to obtain the multiplicative coefficients $a_j'(n)$ such that for $P_{j-1} < p \leq P_j$, 
\begin{align*}
\begin{split}
|a'_j(p)|^2 =  \sum_{1\leq l,k \leq m} \frac{a_l a_k}{p^{i(t_l - t_k)}} + O\left(\frac{\log p}{\log P_R}\right), \quad \sum_{r \geq 2} \frac{|a'_j(p^r)|^2}{p^r}=O \Big( \frac{1}{p^2} \Big). 
\end{split}
\end{align*}
As in the proof of Proposition \ref{prop:RHtwisted}, we deduce from the above that
\[
\mathcal{S}_m \ll \varphi(q) \prod_{p\leq P_R} \left(1 + \frac{1}{p} \sum_{1\leq j,m \leq k} \frac{a_j a_k}{p^{i(t_j - t_m)}} \right) \ll   \varphi(q) (\log q)^{a_1^2 + \cdots + a_m^2} \prod_{1\leq j < l \leq k} |\zeta(1 + i(t_j - t_l) + 1/ \log q )|^{2a_j a_l},
\]
where the final bound above emerges using arguments similar to those in \eqref{Tupperbound}.  This completes the proof of Proposition \ref{prop:IkBound}.

\vspace*{.5cm}

\noindent{\bf Acknowledgments.}  P. G. is supported in part by NSFC grant 12471003 and L. Z. by the FRG Grant PS71536 at the University of New South Wales.

\bibliography{biblio}
\bibliographystyle{amsxport}

\end{document}